\theoremstyle{plain}
\newtheorem{theorem}{Theorem}[section]
\newtheorem{proposition}[theorem]{Proposition}
\newtheorem{lemma}[theorem]{Lemma}
\newtheorem{corollary}[theorem]{Corollary}
\theoremstyle{definition}
\newtheorem{assumption}[theorem]{Assumption}
\theoremstyle{remark}
\newtcolorbox{bluebox}{
  colback=blue!5!white,
  colframe=blue!75!black,
  % title=Research Question
}
\title{Provably Faster Algorithms for Bilevel Optimization via
Without-Replacement Sampling}
\author{%
  Junyi Li and Heng Huang\\
  Department of Computer Science, Institute of Health Computing \\
  University of Maryland College Park\\
College Park, MD, 20742\\
  \texttt{junyili.ai@gmail.com},   \texttt{henghuanghh@gmail.com}
}
\begin{document}

\maketitle

\begin{abstract}
Bilevel Optimization has experienced significant advancements recently with the introduction of new efficient algorithms. Mirroring the success in single-level optimization, stochastic gradient-based algorithms are widely used in bilevel optimization. However, a common limitation in these algorithms is the presumption of independent sampling, which can lead to increased computational costs due to the complicated hyper-gradient formulation of bilevel problems. To address this challenge, we study the example-selection strategy for bilevel optimization in this work. More specifically, we introduce a without-replacement sampling based algorithm which achieves a faster convergence rate compared to its counterparts that rely on independent sampling. Beyond the standard bilevel optimization formulation, we extend our discussion to conditional bilevel optimization and also two special cases: minimax and compositional optimization. Finally, we validate our algorithms over both synthetic and real-world applications. Numerical results clearly showcase the superiority of our algorithms.
\end{abstract}

\section{Introduction}
%Introduce Bilevel
Bilevel optimization~\cite{willoughby1979solutions, solodov2007explicit} has received a lot of interest recently due to its wide-ranging applicability in machine learning tasks, including hyper-parameter optimization~\cite{okuno2018hyperparameter}, meta-learning~\cite{zintgraf2019fast} and personalized federated learning~\cite{pmlr-v139-shamsian21a}. A bilevel optimization problem is a type of nested two-level problems as follows:
\begin{align}
\label{eq:bio-general}
    \underset{x \in \mathbb{R}^p }{\min}\ h(x) &\coloneqq f(x,y_x)\; \mbox{s.t.}\ y_{x} = \underset{y\in \mathbb{R}^{d}}{\arg\min}\  g(x,y),
\end{align}
which includes an outer problem $f(x,y)$ and a inner problem $g(x,y)$. The outer problem $f(x,y)$ relies on the solution $y_x$ of the inner problem $g(x,y)$. Eq.~\eqref{eq:bio-general} can be solved through gradient descent: $x_{t+1} = x_{t} - \eta\nabla h(x_t)$, with $\eta$ be the stepsize and $\nabla h(x_t)$ be gradient at the state $x_t$. Specially, the gradient $\nabla h(x)$~\cite{ghadimi2018approximation} is a function of the inner problem minimizer $y_x$, first order derivatives $\nabla_x f(x, y)$, $\nabla_y f(x, y)$, $\nabla_y g(x, y)$ and second order derivatives $\nabla_{xy} g(x, y)$,  $\nabla_{y^2} g(x, y)$. In particular, $y_x$ can be solved though gradient descent: $y_{t+1} = y_t - \gamma\nabla_y g(x,y_t)$ with $\gamma$ be the stepsize and $\nabla_y g(x_t,y_t)$ be gradient at the iterate $(x_t, y_t)$. In the standard setup, the outer and inner problems are defined over two datasets $\mathcal{D}_u = \{\xi_{i}, i \in [m]\}$ and $\mathcal{D}_l = \{\zeta_{j}, i \in [n]\}$, respectively:
\[f(x,y) = \frac{1}{m}\sum_{i=1}^m f(x,y; \xi_i) \text{, }  g(x,y) = \frac{1}{n}\sum_{j=1}^n g(x,y; \zeta_j),\] Naturally, we sample from the datasets to estimate the first and second-order derivatives in the hyper-gradient formulation. And to guarantee convergence, previous approaches require the examples be \textit{mutually independent}~\cite{ji2020provably} even at the same state $(x,y)$. For example, two different examples $\zeta_1$ and $\zeta_2$ are sampled to estimate $\nabla_y g(x, y)$ and $\nabla_{y^2} g(x, y)$, respectively. This leads to extra computational cost in practice. Indeed, two backward passes are required to evaluate $\nabla_x f(x,y)$ and $\nabla_y f(x,y)$, while five backward passes are needed to evaluate $\nabla_y g(x,y)$, $\nabla_{y^2} g(x,y)$ and $\nabla_{xy} g(x,y)$ for a given state of $(x,y)$. In contrast, one backward pass are needed to evaluate the former and two backward passes for the latter if not sampling independently for each property. This leads to notable computational cost difference for the current large-scale machine learning models~\cite{brown2020language}. Therefore, it is beneficial to explore other example-selection strategies beyond independent sampling. More specifically, we aim to answer the following question in this work: \textit{Can we solve the bilevel optimization problem without using independent sampling?}

\begin{table*}
  \centering
  \caption{ \textbf{Comparisons of the Bilevel Opt. \& Conditional Bilevel Opt. algorithms for finding an $\epsilon$-stationary point}. We include comparison with stochastic gradient descent type of methods and a more comprehensive comparison including other acceleration methods can be found in Table~\ref{tab:1-app} of Appendix. An $\epsilon$-stationary point is defined as $\|\nabla h(x)\| \leq \epsilon$. $Gc(f,\epsilon)$ and $Gc(g,\epsilon)$ denote the number of gradient evaluations \emph{w.r.t.} $f(x,y)$ and $g(x,y)$; $JV(g,\epsilon)$ denotes the number of Jacobian-vector products; $HV(g,\epsilon)$ is the number of Hessian-vector products. $m$ and $n$ are the number of data examples for the outer and inner problems, in particular, $n$ is the maximum number of inner problems for conditional bilevel optimization. Our methods have a dependence over example numbers $(max(m,n))^{q}$, $q$ is a value decided by without-replacement sampling strategy and can have value in $[0,1]$ (A herding-based permutation~\cite{lu2022grab} can let $q=0$).
  } \label{tab:1}
  \resizebox{\textwidth}{!}{
 \begin{tabular}{c|c|c|c|c|c}
  \toprule
    \textbf{Setting} & \textbf{Algorithm} & $\bm{Gc(f,\epsilon)}$ & $\bm{Gc(g,\epsilon)}$ & $\bm{JV(g,\epsilon)}$ & $\bm{HV(g,\epsilon)}$\\ \midrule
   \multirow{4}{*}{\textbf{B.O.}} &  BSA~\cite{ghadimi2018approximation} & $O(\epsilon^{-4})$ & $O(\epsilon^{-6})$ & $O(\epsilon^{-4})$ & $O(\epsilon^{-4})$ \\ 
   & TTSA~\cite{hong2020two} & $O(\epsilon^{-5})$ & $O(\epsilon^{-5})$ &$O(\epsilon^{-5})$ & $O(\epsilon^{-5})$ \\ 
   & StocBiO~\cite{ji2020provably} &  $O(\epsilon^{-4})$ &$O(\epsilon^{-4})$ & $O(\epsilon^{-4})$ &$O(\epsilon^{-4})$  \\
   & SOBA~\cite{dagreou2022framework} &  $O(\epsilon^{-4})$ &$O(\epsilon^{-4})$ & $O(\epsilon^{-4})$ &$O(\epsilon^{-4})$  \\
   & AID/ITD~\cite{ji2020provably} &  $O(max(m,n)\epsilon^{-2})$ &$O(max(m,n)\epsilon^{-2})$ & $O(max(m,n)\epsilon^{-2})$ &$O(max(m,n)\epsilon^{-2})$  \\
   & \textbf{WiOR-BO(Ours)} & $O((max(m,n))^{q}\epsilon^{-3})$ & $O((max(m,n))^{q}\epsilon^{-3})$ &$O((max(m,n))^{q}\epsilon^{-3})$ & $O((max(m,n))^{q}\epsilon^{-3})$ \\ \midrule
  \multirow{3}{*}{\textbf{Cond. B.O.}} 
   & DL-SGD~\cite{hu2023contextual} & $O(\epsilon^{-4})$ & $O(\epsilon^{-6})$ &$O(\epsilon^{-4})$ & $O(\epsilon^{-4})$  \\
   & RT-MLMC~\cite{hu2023contextual} & $O(\epsilon^{-4})$ & $O(\epsilon^{-4})$ &$O(\epsilon^{-4})$ & $O(\epsilon^{-4})$\\
    & \textbf{WiOR-CBO (Ours)} & $O((max(m,n))^{q}\epsilon^{-3})$ &$O((max(m,n))^{2q}\epsilon^{-4})$ & $O(n(max(m,n))^{q}\epsilon^{-3})$ & $O((max(m,n))^{2q}\epsilon^{-4})$   \\\bottomrule
 \end{tabular}
 }
 \vspace{-0.2in}
\end{table*}

Although example-selection is under-explored for bilevel optimization, it has been well studied in the single level optimization literature~\cite{bottou2012stochastic, pmlr-v23-recht12, safran2020good}. For single level problems: $\underset{x \in \mathbb{R}^p }{\min}\ f(x) = \frac{1}{n}\sum_{i=1}^n f(x; \xi_i)$, we iteratively perform the stochastic gradient step $x_{t+1} = x_{t} - \eta\nabla f(x_t, \xi_t)$, where the sample gradient is used to estimate the full gradient and the example $\xi_t$ is sampled according to some order. More specifically, the sampling schemes are roughly divided into two categories: \textit{with-replacement} sampling and \textit{without-replacement} sampling. Traditional stochastic gradient descent typically employs with-replacement sampling to ensure an unbiased estimation of the full gradient in each iteration. In contrast, the without-replacement sampling has biased estimation at each iteration. Despite this, when averaging the gradients of consecutive examples, without-replacement sampling can approximate the full gradient more efficiently~\cite{lu2022general}. In fact, any permutation-based without-replacement sampling has that the gradient estimation error contracts at rate of $O(K^{-2})$~\cite{lu2022general} where $K$ is the number of steps, while stochastic gradient descent only has a rate of $O(K^{-1})$. Inspired by the properties of without-replacement sampling demonstrated in the single-level optimization, we design the algorithm named WiOR-BO (and its variants) which performs without-replacement sampling for bilevel optimization. Naturally, WiOR-BO does not require the independent sampling as in previous methods.  In fact, our algorithm enjoys favorable convergence rate. As shown in Table~\ref{tab:1}, WiOR-BO converges faster compared with their counterparts (\emph{e.g.} StocBiO~\cite{ji2020provably}) using independent sampling. Actually, WiOR-BO gets comparable convergence rate with methods using more complicated variance reduction techniques (\emph{e.g.} MRBO~\cite{yang2021provably}). In Table~\ref{tab:1-app}, we perform a more comprehensive comparison with other variance reduction techniques and in Section~\ref{sec:comp} of Appendix, we further compare WiOR-BO with these methods.

The \textbf{contributions} of our work can be summarized as:
\begin{enumerate}
\vspace{-0.1in}
    \item We propose \textbf{WiOR-BO} which performs without-sampling to solve the bilevel optimization problem and converges to an $\epsilon$ stationary point with a rate of O($\epsilon^{-3}$). This rate improves over the O($\epsilon^{-4}$) rate of its counterparts (\emph{e.g.} stocBiO~\cite{ji2020provably}) using independent sampling.
    \item We propose \textbf{WiOR-CBO} for the conditional bilevel optimization problems and show that our algorithm converges to an $\epsilon$ stationary point with a rate of O($\epsilon^{-4}$). This rate improves over the O($\epsilon^{-6}$) rate of its counterparts (\emph{e.g.} DL-SGD~\cite{hu2023contextual}) using independent-sampling.
    \item We customize our algorithms to the special cases of minimax and compositional optimization problems and demonstrate similar favorable convergence rates.
    % \item We perform extensive numerical experiments to verify the effectiveness of our algorithms 
% \vspace{-0.1in}
\end{enumerate}

\textbf{Notations.} $\nabla$ ($\nabla_{x}$) denotes full gradient (partial gradient, over variable $x$), higher order derivatives follow similar rules. $[n]$ represents sequence of integers from 1 to $n$. $O(\cdot)$ is the big O notation, and we hide logarithmic terms.

\section{Related Works}
\textbf{Bilevel Optimization} has its roots tracing back to the 1960s, beginning with the regularization method proposed by~\cite{willoughby1979solutions}, and then followed by many research works~\cite{ferris1991finite,solodov2007explicit,sabach2017first, ye2022bome}. In the field of machine learning, similar implicit differentiation techniques were used to solve Hyper-parameter Optimization~\cite{larsen1996design,chen1999optimal, do2007efficient}. Exact solutions for Bilevel Optimization solve the inner problem for each outer variable but are inefficient. More efficient algorithms solve the inner problem with a fixed number of steps, and use the `back-propagation through time' technique to compute the hyper-gradient~\cite{domke2012generic, maclaurin2015gradient, franceschi2017forward, pedregosa2016hyperparameter, shaban2018truncated}. Recently, single loop algorithms~\cite{ghadimi2018approximation, hong2020two, ji2020provably, khanduri2021near, yang2021provably, dagreou2022framework, li2022fully, huang2022enhanced} are introduced to perform the outer and inner updates alternatively. Besides, other aspects of the bilevel optimization are also investigated:~\cite{hu2023contextual} studied a type of conditional bilevel optimization where the inner problem depends on the example of the outer problem; ~\cite{yang2023achieving, yudropout} studied the Hessian-free bilevel optimization and proposed an algorithm with optimal convergence rate; \cite{liu2024moreau} proposed an efficient single loop algorithm for the general non-convex bilevel optimization; \cite{yang2024simfbo, li2024communication} studies the bilevel optimization in the federated learning setting.

\textbf{Sample-selection} in stochastic optimization has been extensively studied for the case of single level optimization problems~\cite{bottou2012stochastic, bottou2018optimization, gurbuzbalaban2021random, nguyen2021unified}.
% The traditional stochastic gradient descent analysis assumes the unbiased estimation of full gradient which requires the with-replacement sampling strategy. Besides, various without-replacement strategies were also studied in the literature. 
Beyond the with-replacement sampling adopted by SGD, various without-replacement strategies were also studied in the literature. 
The random-reshuffling strategy shuffles the data samples at the start of each training epoch, and its property was first studied in~\citep{pmlr-v23-recht12} via the non-commutative arithmetic-geometric mean conjecture and followed by~\citep{pmlr-v97-haochen19a, gurbuzbalaban2021random}. \citep{safran2020good, mishchenko2020random} studied the shuffling-once strategy, where the data samples are only shuffled at the start of the training once; \citep{choi2019faster} investigated data echoing where one data sample is repeatedly used. 
% and data echoing can be combined with random-reshuffling and shuffling-once. 
Quasi-Monte Carlo sampling is also applied in stochastic gradient descent~\cite{buchholz2018quasi, lu2022general}. Recently, \citep{lu2022general, lu2022grab} proposed to use average gradient error to study the convergence of various example order strategies and then proposed a herding-based sampling strategy~\cite{welling2009herding} to proactively minimize the average gradient error.
% for each epoch.
%Sample order: Importance Sampling (Lipsctz Constant, gradient norm/bounds), Without Replacement Sampling: Shuffle Once, Random Reshuffling, Data Echo

\section{Without-Replacement Sampling in Bilevel Optimization}
In this work, we focus on the following finite-sum bilevel optimization problem:
\begin{align}\label{eq:bi}
    \underset{x \in \mathbb{R}^p }{\min}\ h(x) &\coloneqq f(x,y_x) =  \frac{1}{m}\sum_{i=1}^m f(x,y_x;\xi_i),\mbox{ s.t. } y_{x} = \underset{y\in \mathbb{R}^{d}}{\arg\min}\  g(x,y) = \frac{1}{n}\sum_{j=1}^n g(x,y; \zeta_j)
\end{align}
where both the outer and the inner problems are defined over a finite dataset, and we denote them as $\mathcal{D}_u = \{\xi_i, i \in [m]\}$ and $\mathcal{D}_l = \{\zeta_j, j \in [n]\}$, respectively. Under mild assumptions, the hyper-gradient of $h(x)$ is:
\begin{align}\label{eq:hg}
\nabla h(x) &= \nabla_x f(x, y_x) -  \nabla_{xy} g(x, y_x)u_x, \;
u_x =  \nabla_{y^2} g(x, y_x)^{-1} \nabla_y f(x, y_x)
\end{align}
Our objective is to minimize $h(x)$ by exploiting Eq.~\eqref{eq:hg} with examples from $\mathcal{D}_u$ and $\mathcal{D}_l$ at each step. More specifically, for a given example order $\pi$ denoting the following example-selection sequence as $\{\xi_t^{\pi} \in \mathcal{D}_u\}$ and $\{\zeta_t^{\pi} \in \mathcal{D}_l\}$ for $t \in [T]$, where $T$ denotes the length of the sequence and can be arbitrarily large. We then perform the following update steps iteratively for $t \in [T]$:
\begin{subequations}
\label{eq:update-BO}
\begin{align}
y_{t+1} &= y_{t} - \gamma_{t}  \nabla_y g (x_{t}, y_{t}; \zeta_t^{\pi}) \label{eq:update-BO.a}\\
u_{t+1} &= u_{t} - \rho_{t} (\nabla_{y^2} g (x_{t}, y_{t}; \zeta_t^{\pi})u_{t} - \nabla_y f(x_{t}, y_{t}; \xi_t^{\pi})) \label{eq:update-BO.b}\\
x_{t+1} &= x_{t} - \eta_{t} (\nabla_x f(x_{t}, y_{t}; \xi_{t}^{\pi}) - \nabla_{xy}g(x_{t}, y_{t}; \zeta_{t}^{\pi})u_{t}) \label{eq:update-BO.c}
\end{align}
\end{subequations}

where Eq.~\eqref{eq:update-BO} adopts the single-loop~\cite{li2022fully} update. More specifically, Eq.~\eqref{eq:update-BO.a} performs gradient descent to estimate the minimizer $y_x$; Eq.~\eqref{eq:update-BO.b} performs gradient descent to estimate $u_x$ defined in Eq.~\eqref{eq:hg}; and Eq.~\eqref{eq:update-BO.c} perform the gradient descent step given the estimation of $y_x$ and $u_x$.

Note that at each step $t$, we use a pair of examples $\{\xi_t^{\pi}, \zeta_t^{\pi}\}$ to estimate the involved properties and require only three backward passes, \emph{i.e.} $\nabla f(x_{t}, y_{t}; \xi_{t}^{\pi})$, $\nabla g (x_{t}, y_{t}; \zeta_t^{\pi})$ and $\nabla (\nabla_{y} g(x_{t}, y_{t}; \zeta_{t}^{\pi}))$. In contrast, if we independently sample examples for each property, seven backward passes are needed. For large-scale machine learning models where back-propagation is expensive, our method can lead to significant computation saving in practice.

\begin{algorithm}[t]
\caption{Without-Replacement Bilevel Optimization (WiOR-BO)}
\label{alg:BiO}
\begin{algorithmic}[1]
\STATE {\bfseries Input:} Initial states $x_I^0$, $y_I^0$ and $u_I^0$; learning rates $\{\gamma_i^r, \rho_i^r, \eta_i^r\}, i \in [I], r \in [R]$, $ I \coloneqq \text{lcm}(m,n)$ denotes the least common multiple of $m$ and $n$.
\FOR{epochs $r=1$ \textbf{to} $R$}
\STATE Randomly sample $I/m$ permutations of outer dataset and concatenate them to have $\{\xi_i^{\pi}, i \in [I]\}$, sample $I/n$ permutations of inner dataset and concatenate them to have $\{\zeta_i^{\pi}, i \in [I]\}$;
\STATE Set $y_0^r = y_{I}^{r-1}$, $u_0^r = \mathcal{P}_{\iota}(u_{I}^{r-1})$ and $x_0^r = x_{I}^{r-1}$
\FOR{$i=0$ \textbf{to} $I-1$}
\STATE $y_{i+1}^r = y_{i}^r - \gamma_i^r  \nabla_y g (x_{i}^r, y_{i}^r; \zeta_i^{\pi})$;  $u_{i+1}^r = u_i^r - \rho_i^r (\nabla_{y^2} g(x_i^r, y_i^r; \zeta_i^{\pi})u_i^r - \nabla_y f (x_i^r, y_i^r; \xi_i^{\pi}))$;
\STATE $x_{i+1}^r = x_{i}^r - \eta_i^r (\nabla_x f(x_i^r, y_i^r; \xi_i^{\pi}) - \nabla_{xy}g(x_i^r, y_i^r;\zeta_i^{\pi})u_{i}^r)$; 
\ENDFOR
\ENDFOR
\end{algorithmic}
\end{algorithm}

Equation~\eqref{eq:update-BO} is independent of the specific order of samples. It can accommodate both with-replacement and without-replacement sampling strategies. In particular, we consider two most widely-used without-replacement sampling methods: \textit{random-reshuffling} and \textit{shuffle-once}. For random-reshuffling, we set $T = R\times lcm(m,n)$, where $lcm(m,n)$ denotes the least common multiple of $m$ and $n$ and $R$ is a constant. More specifically, we generate $R\times lcm(m,n) /m$ random permutations of examples in $\mathcal{D}_u$ then concatenate them to get $\{\xi_t^{\pi} \in \mathcal{D}_u\}$, and we follow similar procedure to generate $\{\zeta_t^{\pi} \in \mathcal{D}_u\}$. In algorithm~\ref{alg:BiO}, we show our \textbf{WiOR-BO} algorithm for solving Eq.~\eqref{eq:bi} with random-reshuffling sampling. Note that in Line 4 of Algorithm~\ref{alg:BiO}, $\mathcal{P}_{\iota}(\cdot)$ is the projection operation to an $\iota$-size ball and we perform projection to make $u_t$ be bounded during updates. Shuffle-once is another widely used without-replacement sampling strategy, where a single permutation is generated and reused for both $\mathcal{D}_u$ and $\mathcal{D}_l$, and we can modify Line 3 of Algorithm~\ref{alg:BiO} to incorporate shuffle-once. Finally, compared to independent sampling-based algorithm such as stocBiO~\cite{ji2020provably}, we require extra space to generate and store the orders of examples, but this cost is negligible compared to memory and computational cost of training large scale models.

\subsection{Conditional Bilevel Optimization}
In Eq.~\eqref{eq:bi}, we assume the outer dataset $\mathcal{D}_u$ and the inner dataset $\mathcal{D}_l$ are independent with each other. However, it is also common in practice that the inner problem not only relies on the outer variable $x$, but also the current outer example. This type of problems is known as conditional (contextual) bilevel optimization problems~\cite{hu2023contextual} in the literature and we consider the finite setting as follows:
\begin{align}\label{eq:bi-cond}
    \underset{x \in \mathbb{R}^p }{\min}\ h(x) &\coloneqq f(x,y_x) =  \frac{1}{m}\sum_{i=1}^m f(x,y_x^{(\xi_i)}, \xi_i),
    \mbox{s.t.}\ y_{x}^{(\xi_i)} = \underset{y\in \mathbb{R}^{d}}{\arg\min}\  g^{(\xi_i)}(x,y) = \frac{1}{n_{\xi_i}}\sum_{j=1}^{n_{\xi_i}} g(x,y; \zeta_{\xi_i,j}),
\end{align}
Note that in Eq.~\eqref{eq:bi-cond}, the outer problem is defined on a finite dataset $\mathcal{D}_u = \{\xi_i, i \in [m]\}$, and for each example $\xi_i \in \mathcal{D}_u$, we have a inner problem $g^{(\xi_i)}(x,y)$ defined on a dataset $\mathcal{D}_{l, \xi_i} = \{\zeta_{\xi_i, j}, j \in [n_{\xi_i}]\}$. The hyper-gradient of Eq.~\eqref{eq:bi-cond}
is:
\begin{align}\label{eq:hg_cond}
\nabla h(x) &= \frac{1}{m}\sum_{i=1}^m \big(\nabla_x f(x, y_x^{\xi_i}; \xi_i) -  \nabla_{xy} g^{(\xi_i)}(x, y_x^{\xi_i})u_x^{\xi_i}\big),
u_x^{\xi_i} =  \nabla_{y^2} g^{(\xi_i)}(x, y_x)^{-1} \nabla_y f(x, y_x; \xi_i),
\end{align}
The key difference of Eq.~\eqref{eq:hg_cond} with Eq.~\eqref{eq:hg} is that $y_x^{\xi_i}$ and $u_x^{\xi_i}$ are defined for each example $\xi_i \in \mathcal{D}_u$. To minimize $h(x)$ in Eq.~\eqref{eq:bi-cond}, we assume a sample order $\pi$ denoting the following example selection sequence as $\{\xi_{t}^{\pi} \in \mathcal{D}_u, t \in [T]\}$, where $T$ denotes the example sequence length and can be arbitrarily large. We then perform the following update steps iteratively:
\begin{align}
\label{eq:update-x-cond}
    x_{t + 1} &= x_{t} - \eta_{t} (\nabla_x f(x_{t}, \hat{y}_{t}; \xi_{t}^{\pi}) - \nabla_{xy} g^{\xi_{t}^{\pi}}(x_{t}, \hat{y}_{t})\hat{u}_{t})
\end{align}
where $\hat{y}_{t}$ and $\hat{u}_{t}$ are estimations of $y_{x_t}^{\xi_{t}^{\pi}}$ $u_{x_t}^{\xi_{t}^{\pi}}$. We get them by performing the following rules $T_{l}$ steps over the sample order $\{\zeta_{\xi_{t}^{\pi}, t_l}^{\pi} \in \mathcal{D}_{l,\xi_t^{\pi}}, t_l \in [T_l]\}\}$:
\begin{align}
\label{eq:update-yu-cond}
y_{t_l + 1} &= y_{t_l} - \gamma_{t_l}  \nabla_y g (x_t, y_{t_l}; \zeta_{\xi_{t}^{\pi}, t_l}^{\pi}),
u_{t_l+1} = u_{t_l} - \rho_{t_l} (\nabla_{y^2} g(x_t, y_{t_l}, \zeta_{\xi_{t}^{\pi}, t_l}^{\pi})u_{t_l} - \nabla_y f(x_{t}, y_{t_l}; \xi_{t}^{\pi}))   
\end{align}
where both the outer variable state $x_t$ and sample $\xi_t^{\pi}$ are fixed. We then set $\hat{y}_{t} = y_{T_l}$ and $\hat{u}_{t} = u_{T_l}$ in Eq.~\eqref{eq:update-x-cond}. Note that Eq.~\eqref{eq:update-x-cond} and Eq.~\eqref{eq:update-yu-cond} perform a double-loop update rule~\cite{hu2023contextual}. Since the inner problem relies on the example of the outer problem, we compute $y_x^{\xi_i}$ and $u_x^{\xi_i}$ for each new state $x$ and sample $\xi_i$ instead of reusing them as in the single loop update of Eq.~\eqref{eq:update-BO}. 

Note that an important feature of our algorithm is that we select samples based on an example order $\pi$ at each step instead of sampling independently. Similar to the unconditional bilevel optimization, various example orders can be incorporated. For random-reshuffling, we set $T = Rm$ with $R$ be a constant. Then we generate $R$ random permutations of examples in $\mathcal{D}_u$ and concatenate them to get the example order $\{\xi_{t}^{\pi} \in \mathcal{D}_u, t \in [T]\}$. Similarly, we concatenate $S$ random permutations of the inner dataset $\mathcal{D}_{l,\xi_t}$ to get $\{\zeta_{\xi_{t}^{\pi}, t_l}^{\pi} \in \mathcal{D}_{l,\xi_t^{\pi}}, t_l \in [T_l]\}\}$ with a sequence length $T_l = S\times n_{\xi_i}$. We summarize this in Algorithm~\ref{alg:BiO-Cond} and denote it as \textbf{WiOR-CBO}. We slightly abuse the notation to replace $n_{\xi_i}$ with $n_i$ in the inner loop update for better clarity.

\begin{algorithm}[t]
\caption{Without-Replacement Conditional Bilevel Optimization (WiOR-CBO)}
\label{alg:BiO-Cond}
\begin{algorithmic}[1]
\STATE {\bfseries Input:} Initial state $x^0$, learning rates $\{\eta_i^r\}, i \in [m], r \in [R]$.
\FOR{epochs $r=1$ \textbf{to} $R$}
\STATE Randomly sample a permutation of outer dataset to have $\{\xi_{i}^{\pi}, i\in[m]\}$ and set $x_0^r = x_m^{r-1}$ or $x_0$ if  $r=1$.
\FOR{$i=0$ \textbf{to} $m-1$}
\STATE Initial states $y^0$ and $u^0$, learning rates  $\{\gamma_j^s, \rho_j^s\}, j \in [n_i], s \in [S]$.
\FOR{$s=1$ \textbf{to} $S$}
\STATE Sample a permutation of inner dataset to have $\{\zeta_{\xi_i, j}^{\pi}, j\in[n_{i}]\}$ , set $y_0^s = y_{n_i}^{s-1}$ and $u_0^s = \mathcal{P}_{\iota}(u_{n_i}^{s-1})$ or $y_0^s = y_0$ and $u_0^s = u_0$ if $s=1$.
\FOR{$j=0$ \textbf{to} $n_i-1$}
\STATE $y_{j+1}^s = y_{j}^s - \gamma_j^s  \nabla_y g (x_{i}^r, y_{j}^s; \zeta_{\xi_i, j}^{\pi})$;
\STATE $u_{j+1}^s = u_j^s - \rho_j^s (\nabla_{y^2} g(x_{i}^r, y_j^s; \zeta_{\xi_i, j}^{\pi})u_j^s - \nabla_y f(x_{i}^r, y_j^s; \xi_i^{\pi}))$;
\ENDFOR
\ENDFOR
\STATE $x_{i+1}^r = x_{i}^r - \eta_i^r (\nabla_x f(x_{i}^r, y_{n_i}^S; \xi_i^{\pi}) - \nabla_{xy}g^{\xi_{i}^{\pi}}(x_{i}^r, y_{n_i}^S)u_{n_i}^S)$;
\ENDFOR
\ENDFOR
\end{algorithmic}
\end{algorithm}

\subsection{MiniMax and Compositional Optimization Problems}
\vspace{-3pt}
Our discussion of without-replacement sampling for bilevel optimization can be customized for two important nested optimization problems: \textit{compositional optimization} and \textit{minimax optimization} problems, as they can be viewed as a special type of bilevel optimization problem. Suppose we let the outer problem only rely on $y$, set the inner problem as $g(x,y) = \frac{1}{2}\|y - r(x)\|^2$. Consider the finite case of $f(y)$ and $r(x)$, we have the following compositional optimization problem:
\begin{align}\label{eq:comp}
    \underset{x \in \mathbb{R}^p }{\min}\ h(x) &\coloneqq \frac{1}{m}\sum_{i=1}^m f(r(x); \xi_i) = \frac{1}{m}\sum_{i=1}^m f(\frac{1}{n}\sum_{j=1}^n r(x;\zeta_j); \xi_i)
\end{align}
Then we can derive the hyper-gradient as:
$\nabla h(x) =  \nabla_{x} r(x)u_x, \text{where } u_x =  \nabla_y f(r(x)), $ meanwhile, we have $\nabla_y g(x,y) = y - r(x)$. Then we can instantiate Algorithm~\ref{alg:BiO} to get an algorithm for compositional optimization problem using without-replacement sampling (Algorithm~\ref{alg:Comp} of Appendix). Similar to the SCGD algorithm~\cite{Wang_2016}, we track the exponential moving average of the inner state $y$, besides we also track the exponential moving average of inner gradient (the update rule of $u$ in Algorithm~\ref{alg:Comp}).

Next, similar to the unconditional case, the conditional compositional optimization problem can be specialized from the conditional bilevel optimization problem Eq.~\eqref{eq:bi-cond} to have:
\begin{align}\label{eq:comp-cond}
    \underset{x \in \mathbb{R}^p }{\min}\ h(x) &\coloneqq \frac{1}{m}\sum_{i=1}^m f(\frac{1}{n_i}\sum_{j=1}^{n_i }r(x; \zeta_{\xi_i, j}); \xi_i)
\end{align}
and we can instantiate Algorithm~\ref{alg:BiO-Cond} in the special case of Eq.~\eqref{eq:comp-cond} to get Algorithm~\ref{alg:Comp-Cond} (in the Appendix).

The minimax optimization can also be viewed as a special type of bilevel optimization. More specifically, if we have $g(x,y) = -f(x,y)$ in Eq.~\eqref{eq:bi}, then we get the following minimax problem:
\begin{align}\label{eq:minimax}
    \underset{x \in \mathbb{R}^p }{\min}\ h(x) &\coloneqq \frac{1}{m}\sum_{i=1}^m \underset{y\in \mathbb{R}^{d}}{\max}\;f(x,y; \xi_i)
\end{align}
In the special case of Eq.~\eqref{eq:minimax}, the hyper-gradient $\nabla h(x)$ in Eq.~\eqref{eq:hg} degenerates to $\nabla h(x) = \nabla_x f(x, y_x)$ due to the fact that $\nabla_y f(x, y_x) = -\nabla_y g(x, y_x) = 0$ by the optimality condition. Meanwhile, we have $\nabla_y g(x,y) = -\nabla_y f(x, y)$. As a result, we can simplify Algorithm~\ref{alg:BiO} to get an alternative update algorithm for minimax optimization using without-replacement sampling (Algorithm~\ref{alg:MiniMax} in the Appendix). Note that due to the linearity of minimax problem \emph{w.r.t.} examples, the conditional case for minimax optimization degenerates to the unconditional case. \cite{das2022sampling, cho2022sgda} also considers the without-replacement sampling for the minimax problem, however, they focus
on the strongly-convex-strongly-concave and two-sided Polyak-Łojasiewicz settings, in contrast, our WiOR-MiniMax can be applied to nonconvex-strongly-concave setting. 

% \begin{align}\label{eq:minimax-cond}
%     \underset{x \in \mathbb{R}^p }{\min}\ h(x) &\coloneqq \frac{1}{m}\sum_{i=1}^m \left(\frac{1}{n_i}\sum_{j=1}^{n_i} \underset{y\in \mathbb{R}^{d}}{\max}\; f^{(i,j)}(x,y)\right)
% \end{align}
\vspace{-3pt}
\section{Theoretical Analysis}
\vspace{-3pt}
In this section, we provide theoretical analysis of our algorithms. We first state some assumptions:

\begin{assumption}\label{assumption:f_smoothness}
Function $f(x,y; \xi), \xi \in \mathcal{D}_u$, is possibly non-convex, $L$-smooth and has $C_f$-bounded gradient \emph{w.r.t.} the variable $y$.
\end{assumption}

\begin{assumption}\label{assumption:g_smoothness}
Function $g(x,y; \zeta), \zeta \in \mathcal{D}_l$, is  $\mu$-strongly convex \emph{w.r.t} $y$ for any given $x$, and $L$-smooth. Furthermore, $\nabla_{xy} g(x,y; \zeta)$ and $\nabla_{y^2} g(x,y; \zeta)$ are Lipschitz continuous with constants $L_{xy}$ and $L_{y^2}$ respectively.
\end{assumption}

\begin{assumption}\label{assumption:bounded_diff}
For all examples $\xi_i \in D_u$ and states $(x,y) \in \mathbb{R}^{p+d}$, there exists an upper bound A such that the sample gradient errors satisfy $\|\nabla f(x,y; \xi_i) - \nabla f(x,y)\| \leq A$.
\end{assumption}

\begin{assumption}\label{assumption:bounded_diff2}
For all examples $\zeta_j \in D_l$ and states $(x,y) \in \mathbb{R}^{p+d}$, there exists an upper bound A such that the sample gradient errors satisfy $\|\nabla g(x,y; \zeta_i) - \nabla g(x,y)\| \leq A$ and $\|\nabla^2 g(x,y; \zeta_i) - \nabla^2 g(x,y)\| \leq A$.
\end{assumption}

As stated in The assumption~\ref{assumption:f_smoothness} and~\ref{assumption:g_smoothness}, we consider the \textit{non-convex-strongly-convex} bilevel optimization problems, this class of problems is widely studied in bilevel optimization literature~\cite{ghadimi2018approximation, ji2020provably}.Assumption~\ref{assumption:bounded_diff} and~\ref{assumption:bounded_diff2} guarantee that the example gradients are good estimation of full gradient and is widely used in the analysis of single-level finite-sum optimization problems~\cite{mishchenko2020random}. Note that as the hyper-gradient (Eq.~\ref{eq:hg}) involves second order derivatives, Assumption~\ref{assumption:bounded_diff2} also requires that the second order example gradients have bounded bias.

We next make the following assumptions about the example selection sequence. We assume $\{\xi_t^{\pi} \in \mathcal{D}_u, t \in [T]\}$ and $\{\zeta_t^{\pi} \in \mathcal{D}_l, t \in [T]\}$ satisfy:

\begin{assumption}\label{assumption:ave-grad-err}
Given sequences of samples: $\{\xi_t^{\pi} \in \mathcal{D}_u, t \in [T]\}$ and $\{\zeta_t^{\pi} \in \mathcal{D}_l, t \in [T]\}$, we assume there exists some constants $\alpha$, $C$ such that for any step $t$ and any interval $k > 0$, we have:
\begin{align*}
    \big\|\frac{1}{k} \sum_{\tau = t}^{t+k-1} \nabla f (x_t,y_t; \xi_{\tau}^{\pi}) - \nabla f (x_t,y_t)\big\|^2 \leq k^{-\alpha}C^2
\end{align*}
and similar bounds hold for $\nabla g$ and $\nabla^2 g$ (See Appendix~\ref{sec:B} for the full version of this assumption.)
\end{assumption}

Assumption~\ref{assumption:ave-grad-err} measures how well the sample gradients approximate the full gradient over an interval on average. Similar assumptions are used to study the sample order in single level optimization problems~\cite{lu2022general}. If samples are selected independently at each step, we get $\alpha = 1$ and $C = A$ based on Assumption~\ref{assumption:bounded_diff} and~\ref{assumption:bounded_diff2}. For any permutation-based order (\emph{e.g.} random-reshuffling and shuffle-once), we can show the assumption holds with $\alpha = 2$ and $C = \max(m,n)\times A$. Note that the $C$ depends on the number of samples $m$ ($n$), this dependence can be reduced to $(\max(m,n))^{0.5}$ for sufficiently small learning rates~\cite{lu2022general, mohtashami2022characterizing}. More recently, \citep{lu2022grab} proposed a herding-based algorithm which explicitly optimizes the gradient error in Assumption~\ref{assumption:ave-grad-err} and shows that we can reach $\alpha = 2$ and $C = O(A)$.

\textbf{Bilevel Optimization}. We are ready to show the convergence of Algorithm~\ref{alg:BiO}. Firstly, we denote the following potential function $\mathcal{G}_t \coloneqq \mathcal{G}_t = h(x_{t}) + \phi_y\|y_t - y_{x_{t}} \|^2 + \phi_u \|u_t - u_{x_t} \|^2,$
where $\phi_y$ and $\phi_u$ are constants that relates to the smoothness parameters of $h(x)$. Then we have:
\begin{theorem}
\label{theorem:BO}
Suppose Assumptions~\ref{assumption:f_smoothness}-\ref{assumption:bounded_diff2} are satisfied, and Assumption~\ref{assumption:ave-grad-err} is satisfied with some $\alpha$ and $C$ for the example order we use in Algorithm~\ref{alg:BiO}. We choose learning rates $\eta_t = \eta$, $\gamma_t = c_1 \eta$, $\rho_t = c_2 \eta$ and denote $T = R\times I$ be the total number of steps. Then for any pair of values (E, k) which has $T = E\times k$ and $\eta \leq \frac{1}{k\tilde{L}_0}$, we have:
\begin{align*}
   \frac{1}{E}\sum_{e = 0}^{E-1} \|\nabla h(x_{ke})\|^2 &\leq \frac{2\Delta}{kE\eta} + 2\tilde{L}^2k^{-\alpha}C^2 
\end{align*}
where $c_1$, $c_2$, $\tilde{L}_0$, $\tilde{L}$ are constants related to the smoothness parameters of $h(x)$, $\Delta$ is the initial sub-optimality, $R$ and $I$ are defined in Algorithm~\ref{alg:BiO}.
\vspace{-0.1in}
\end{theorem}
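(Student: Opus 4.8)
The plan is to prove a per-block descent inequality for the potential $\mathcal{G}_t$ and then telescope over the $E$ blocks. Writing $s = ke$ for a generic block start, the goal is to establish
$$\mathcal{G}_{s+k} \le \mathcal{G}_s - \tfrac{k\eta}{2}\,\|\nabla h(x_s)\|^2 + k\eta\,\tilde L^2 k^{-\alpha}C^2,$$
since summing this over $e=0,\dots,E-1$ telescopes the potential (with $\mathcal{G}_0 - \mathcal{G}_T \le \Delta$ using $h \ge h^\star$ and non-negative auxiliary terms), and dividing by $k\eta E/2$ yields exactly the claimed bound. The reason we evaluate $\|\nabla h\|^2$ only at the block boundaries $x_{ke}$ is that the without-replacement guarantee (Assumption~\ref{assumption:ave-grad-err}) controls bias only on average over a window of length $k$, so the natural unit of analysis is a block of $k$ consecutive steps rather than a single step.

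First I would collect the standard regularity facts for the non-convex--strongly-convex setting: under Assumptions~\ref{assumption:f_smoothness}--\ref{assumption:g_smoothness}, $h$ is $L_h$-smooth, the inner minimizer map $x\mapsto y_x$ and the linear-system solution $x\mapsto u_x$ are Lipschitz, and $\|u_x\|\le C_f/\mu$ so that the projection $\mathcal{P}_\iota$ keeps $u_t$ bounded without distorting the target. Using these, I would decompose the one-step update direction $d_\tau = \nabla_x f(x_\tau,y_\tau;\xi_\tau^\pi) - \nabla_{xy} g(x_\tau,y_\tau;\zeta_\tau^\pi)u_\tau$ as $\nabla h(x_\tau)$ plus (i) a \emph{tracking error} caused by $y_\tau\neq y_{x_\tau}$ and $u_\tau\neq u_{x_\tau}$, which the Lipschitz facts bound by $O(\|y_\tau-y_{x_\tau}\| + \|u_\tau-u_{x_\tau}\|)$, and (ii) a \emph{sampling bias} from using $\xi_\tau^\pi,\zeta_\tau^\pi$ in place of the full sums. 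In parallel I would write the contraction recursions for the auxiliary errors: strong convexity of $g$ gives $\|y_{\tau+1}-y_{x_\tau}\|^2 \le (1-\gamma\mu)\|y_\tau - y_{x_\tau}\|^2 + (\text{bias})$, and positive-definiteness of $\nabla_{y^2}g$ gives an analogous contraction for $\|u_{\tau+1}-u_{x_\tau}\|^2$; the change of target from $y_{x_\tau}$ to $y_{x_{\tau+1}}$ (and likewise for $u$) adds a drift proportional to $\|x_{\tau+1}-x_\tau\|^2 = \eta^2\|d_\tau\|^2$, absorbed because $\eta$ is small.

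The crux is the block linearization, where the without-replacement structure pays off. Over a block I would freeze the reference state at $(x_s,y_s,u_s)$ and argue that, because $\eta,\gamma,\rho$ are all $O(\eta)$, there are only $k$ steps, the update directions are bounded (a consequence of the bounded gradients and the projection on $u$), and $\eta\le 1/(k\tilde L_0)$, every intra-block iterate obeys $\|x_\tau - x_s\|,\|y_\tau-y_s\|,\|u_\tau-u_s\| = O(k\eta)$. Smoothness then lets me replace each sampled gradient $\nabla_\bullet(x_\tau,y_\tau;\cdot)$ by $\nabla_\bullet(x_s,y_s;\cdot)$ up to an $O(k\eta)$ perturbation, so the block average $\frac1k\sum_{\tau=s}^{s+k-1}\nabla_\bullet(x_s,y_s;\cdot)$ appears, and Assumption~\ref{assumption:ave-grad-err} bounds its deviation from the full gradient by $k^{-\alpha}C^2$. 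This converts the per-step bias (only $O(A)$) into the far smaller averaged bias $O(k^{-\alpha}C^2)$, and it is exactly this exchange that yields the improved rate. I expect this step to be the main obstacle, because it must be carried out simultaneously for three coupled sequences — the streams $\{\xi_\tau^\pi\}$ and $\{\zeta_\tau^\pi\}$ enter the $x$, $y$, and $u$ updates at once — and one must verify that the $O(k\eta)$ linearization remainders, once summed over the block and propagated through the tracking recursions, stay dominated by the $k^{-\alpha}C^2$ term under the stated step-size condition.

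Finally I would assemble the three pieces into the potential. Choosing $\phi_y,\phi_u$ large enough that the contraction factors $1-\gamma\mu$ and its $u$-analogue dominate the tracking-error and drift contributions fed in from the $x$-descent, the weighted difference $\mathcal{G}_{s+k}-\mathcal{G}_s$ collapses to the advertised form: the $x$-descent supplies the $-\tfrac{k\eta}{2}\|\nabla h(x_s)\|^2$ term, the auxiliary contractions cancel the tracking errors, and all bias contributions collect into $k\eta\,\tilde L^2 k^{-\alpha}C^2$. The constants $c_1,c_2$ are fixed so that $y$ and $u$ contract fast enough relative to the $x$ step, while $\tilde L_0,\tilde L$ are the composite smoothness constants produced by the decomposition. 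Telescoping over $e$ and dividing as above completes the proof.
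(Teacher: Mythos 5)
Your overall architecture matches the paper's: the potential $\mathcal{G}_t = h(x_t) + \phi_y\|y_t - y_{x_t}\|^2 + \phi_u\|u_t - u_{x_t}\|^2$, block-level ("aggregated") analysis anchored at $x_{ke}$, frozen-reference linearization so that Assumption~\ref{assumption:ave-grad-err} can be applied to the block average, and telescoping over $E$ blocks. The genuine gap is in how you control the intra-block drift, which you yourself flag as the crux. You assert that the update directions are uniformly bounded ("bounded gradients and the projection on $u$") and conclude $\|x_\tau - x_s\|, \|y_\tau - y_s\|, \|u_\tau - u_s\| = O(k\eta)$ with a state-independent constant. Neither half of this survives. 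First, the assumptions do not bound $\nabla_x f(x,y;\xi)$ or $\nabla_y g(x,y;\zeta)$: Assumption~\ref{assumption:f_smoothness} bounds only $\nabla_y f$ by $C_f$, and a $\mu$-strongly convex $g$ cannot have a uniformly bounded gradient, so the $x$- and $y$-update directions grow with exactly the tracking errors you are trying to control. Second, and more fatally, even if you added a bounded-gradient assumption with constant $G$, the bound is too weak: the theorem requires $\eta = \Theta(1/(k\tilde{L}_0))$ to make $\frac{2\Delta}{kE\eta}$ give the claimed rate, so $k\eta = \Theta(1/\tilde{L}_0)$ and your "$O(k\eta)$" drift and linearization remainders are $\Theta(1)$ constants, not vanishing quantities. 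Feeding a uniform drift bound $k\eta G$ into the bias term of the descent step (it enters as $\tilde{L}_1^2\sum_\tau \eta_\tau\|z_s - z_\tau\|^2 \le \tilde{L}_1^2 (k\eta)^3 G^2$ per block) and dividing by $kE\eta/2$ leaves an additive term of order $\tilde{L}_1^2 G^2/\tilde{L}_0^2$ in the final bound on $\frac{1}{E}\sum_e \|\nabla h(x_{ke})\|^2$ — a constant that never decays. So the verification you defer, that the remainders are "dominated by $k^{-\alpha}C^2$", cannot be carried out.

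The repair is to bound the drift in a state-dependent way, which is what the paper's Lemma~\ref{lemma:var-drift} does: $\sum_{\tau}\eta_\tau\|z_s - z_\tau\|^2 \lesssim \bar{\eta}^3\big(\|\nabla h(x_s)\|^2 + c_1^2\|\nabla_y g(x_s,y_s)\|^2 + c_2^2\|\nabla r_{x_s}(u_s)\|^2\big) + \bar{\eta}\eta^2 k^{2-\alpha}C^2$. The gradient-norm terms carry an extra factor $\bar{\eta}^2$ relative to the descent term and are then absorbed into negative terms that the analysis must deliberately retain: $-\frac{\bar{\eta}}{2}\|\nabla h(x_s)\|^2$ from the descent step, and $-\frac{\bar{\gamma}^2}{2}\|\nabla_y g(x_s,y_s)\|^2$, $-\frac{\bar{\rho}^2}{2}\|\nabla r_{x_s}(u_s)\|^2$ produced by Lemmas~\ref{lemma:y-diff} and~\ref{lemma:u-diff}; when gradients are large the drift is large, but so are the negatives that cancel it. Relatedly, your per-step contractions $\|y_{\tau+1}-y_{x_\tau}\|^2 \le (1-\gamma\mu)\|y_\tau - y_{x_\tau}\|^2 + (\text{bias})$ cannot exploit Assumption~\ref{assumption:ave-grad-err}: the per-step bias is only $O(A^2)$, which reproduces the independent-sampling rate. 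The paper instead treats the whole block as a single aggregated gradient step $y_{s+k} = y_s - \bar{\gamma}_s\bar{w}_s$ (Proposition~\ref{prop:strong-prog} applied with stepsize $\bar{\gamma}_s$ and direction $\bar{w}_s$), so that the error $\|\nabla_y g(x_s,y_s) - \bar{w}_s\|^2$ decomposes into drift plus the block-averaged sampling bias that the assumption actually controls; the same aggregation is needed for $u$.
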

To reach an $\epsilon$ stationary point, we choose $\eta = \frac{1}{k\tilde{L}_0}$, and for $(E,k)$, we choose: $E = \frac{4\tilde{L}_0\Delta}{\epsilon^2} \text{ and } k = \left(\frac{4\tilde{L}^2C^2}{\epsilon^2}\right)^{1/\alpha}$,
which means we need to choose the number of epochs $R = \frac{T}{I} = \frac{4\tilde{L}_0(4\tilde{L}^2C^2)^{1/\alpha}\Delta}{I\epsilon^{2+2/\alpha}}$.
Specially, if we choose examples independently at each step, we have $C = A$ and $\alpha = 1$, then we need $T = O(\epsilon^{-4})$ steps to reach an $\epsilon$-stationary point. This recovers the rate of stocBiO algorithm~\cite{ji2020provably} and SOBA~\cite{dagreou2022framework}.
Next, if we use some permutation-based without-replacement sampling strategy witch has $C = (\max(m,n))^q\times A$ ($q \in [0,1]$) and $\alpha = 2$, then we need $T = O((\max(m,n))^q\epsilon^{-3})$. In particular, by adopting a herding-based permutation as in~\cite{lu2022grab}, we get $q=0$, and our WiOR-BO strictly outperforms the independent sampling-based stocBiO and SOBA algorithms.

\textbf{Conditional Bilevel Optimization}. Next, we study the convergence rate of Algorithm~\ref{alg:BiO-Cond}. Besides Assumptions~\ref{assumption:bounded_diff} and~\ref{assumption:bounded_diff2}, we need the bias of sample hyper-gradient be bounded too:
\begin{assumption}\label{assumption:bounded_diff3}
For all examples $\xi_i \in D_u$ and states $x \in \mathbb{R}^{p}$, there exists an upper bound A such that the sample gradient errors satisfy $\|\nabla h(x; \xi_i) - \nabla h(x)\| \leq A$.
\end{assumption}
We also augment Assumption~\ref{assumption:ave-grad-err} to include the case of $\nabla h(x)$ (Seem Appendix~\ref{sec:B} for more details.). Then we are ready to show the convergence rate, we use the potential function $\mathcal{G}_t = h(x_t)$ and have:
\begin{theorem}
\label{theorem:BO-cond}
Suppose Assumptions~\ref{assumption:f_smoothness}-\ref{assumption:bounded_diff2} and ~\ref{assumption:bounded_diff3} are satisfied, and Assumption~\ref{assumption:ave-grad-err} is satisfied with some $\alpha$ and $C$ for the example order we use in Algorithm~\ref{alg:BiO-Cond}. We choose learning rates $\eta_t = \eta = \frac{1}{8k\bar{L}}$, $\gamma_t = \gamma = \frac{1}{256kL\kappa}$, $\rho_t =  \rho = \frac{1}{512kL\kappa}$ and denote $T = R\times m$ be the total number of outer steps and $T_l = S \times \max(n_i)$ be the maximum inner steps. Then for any pair of values (E, k) which has $T = E\times k$ and $T_l = E_l \times k$, we have:
\begin{align*}
     \frac{1}{E} \sum_{e = 0}^{E-1} \|\nabla h(x_{ke})\|^2 &\leq \frac{8\Delta_h}{kE\eta} + C_u( 1-\frac{k\mu\rho}{2})^{E_{l}}\Delta_u  + C_y( 1-\frac{k\mu\gamma}{2})^{E_{l}}\Delta_y  + (C^{'})^2C^2k^{-\alpha}
\end{align*}
where $C_u$, $C_y$, $C^{'}$, $\tilde{L}$ are constants related to the smoothness parameters of $h(x)$, $\Delta_h$, $\Delta_u$, $\Delta_y$ are the initial sub-optimality, $R$ and $S$ are defined in Algorithm~\ref{alg:BiO-Cond}.
\end{theorem}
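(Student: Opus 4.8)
The plan is to decouple the analysis into an inner-loop estimation bound and an outer-loop descent argument, and then stitch them together through the same interval-averaging device used for Theorem~\ref{theorem:BO}. Because the double-loop structure of Algorithm~\ref{alg:BiO-Cond} freezes the outer iterate $x_i^r$ and the outer sample $\xi_i^\pi$ while the inner recursion~\eqref{eq:update-yu-cond} runs, each inner loop is exactly a without-replacement stochastic solve of the $\mu$-strongly convex problem $g^{\xi_i^\pi}(x_i^r,\cdot)$ together with its linear companion for $u$. First I would establish, for a fixed outer step, the geometric contraction of the inner iterates: grouping the $T_l = E_l\times k$ inner steps into $E_l$ blocks of length $k$, a strongly-convex descent estimate combined with Assumption~\ref{assumption:ave-grad-err} gives $\|\hat{y}_t - y_{x_t}^{\xi_t^\pi}\|^2 \le (1-\tfrac{k\mu\gamma}{2})^{E_l}\Delta_y + O(k^{-\alpha})$. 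The $u$-recursion is analyzed next; since its fixed point $u_x^{\xi}$ solves the linear system $\nabla_{y^2}g\,u = \nabla_y f$ evaluated at the true $y_x^\xi$, its contraction factor is $(1-\tfrac{k\mu\rho}{2})^{E_l}$, but its error is coupled to the $y$-error through the Lipschitz constant $L_{y^2}$ of $\nabla_{y^2}g$ and the smoothness of $\nabla_y f$; the projection $\mathcal{P}_\iota$ is precisely what keeps $u_j^s$ bounded so that this coupling stays controlled.

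Second, I would convert the inner accuracy into a hyper-gradient estimation error. Writing $\widehat{\nabla}h(x_t;\xi_t^\pi) = \nabla_x f(x_t,\hat{y}_t;\xi_t^\pi) - \nabla_{xy}g^{\xi_t^\pi}(x_t,\hat{y}_t)\hat{u}_t$ for the direction used in~\eqref{eq:update-x-cond}, the Lipschitz hypotheses of Assumptions~\ref{assumption:f_smoothness}--\ref{assumption:g_smoothness} and the boundedness of $\hat{u}_t$ yield $\|\widehat{\nabla}h(x_t;\xi_t^\pi) - \nabla h(x_t;\xi_t^\pi)\|^2 \lesssim \|\hat{y}_t - y_{x_t}^{\xi_t^\pi}\|^2 + \|\hat{u}_t - u_{x_t}^{\xi_t^\pi}\|^2$, so that the per-step bias of the outer update is controlled entirely by the two inner-loop residuals from the first step.

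Third, I would run the outer descent on the potential $\mathcal{G}_t = h(x_t)$. Using the smoothness of $h$ (constant $\bar L$) gives the one-step inequality $h(x_{t+1}) \le h(x_t) - \eta\langle \nabla h(x_t), \widehat{\nabla}h(x_t;\xi_t^\pi)\rangle + \tfrac{\bar L\eta^2}{2}\|\widehat{\nabla}h(x_t;\xi_t^\pi)\|^2$, and I would split the inner product into the idealized sampled term $\langle \nabla h(x_t), \nabla h(x_t;\xi_t^\pi)\rangle$ plus an error absorbed by the second step. The crucial maneuver, identical in spirit to Theorem~\ref{theorem:BO}, is to partition the $T$ outer steps into $E$ blocks of length $k$ and, within a block, replace the moving reference $\nabla h(x_t;\xi_t^\pi)$ by its value at the block anchor $x_{ke}$; the discrepancy is $O(\eta k)$ because the update direction is bounded, which the choice $\eta = \tfrac{1}{8k\bar L}$ renders harmless. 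Averaging the anchored samples $\{\nabla h(x_{ke};\xi_\tau^\pi)\}$ over the block and invoking the augmented form of Assumption~\ref{assumption:ave-grad-err} for $\nabla h$ (enabled by Assumption~\ref{assumption:bounded_diff3}) produces the $(C')^2C^2k^{-\alpha}$ term, while telescoping $\mathcal{G}_t$ across all blocks yields the $\tfrac{8\Delta_h}{kE\eta}$ term and the inner residuals collect into the two geometric factors.

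I expect the main obstacle to be the clean separation of the two error sources in a double-loop scheme: the outer interval-averaging argument needs each inner solve to be accurate enough that its residual is not amplified when summed over the $E$ outer blocks, yet each inner solve itself carries its own without-replacement floor of order $k^{-\alpha}$. Arranging the bookkeeping so that the inner residual surfaces only through the clean geometric terms $(1-\tfrac{k\mu\rho}{2})^{E_l}$ and $(1-\tfrac{k\mu\gamma}{2})^{E_l}$, rather than leaking extra $k^{-\alpha}$ contributions that would degrade the rate, is the delicate point; it is where the prescribed learning-rate ratios $\gamma = \tfrac{1}{256kL\kappa}$ and $\rho = \tfrac{1}{512kL\kappa}$ relative to $\eta$ must be used to keep the coupling terms subordinate and to fold any residual inner sampling error into the single $(C')^2C^2k^{-\alpha}$ term.
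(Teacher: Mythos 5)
Your proposal follows essentially the same route as the paper's proof: a block-wise inner-loop contraction (geometric factor plus a $k^{-\alpha}$ floor, with the $u$-error coupled to the $y$-error), conversion of the inner residuals into hyper-gradient bias via the Lipschitz assumptions, and an outer aggregated descent on $\mathcal{G}_t = h(x_t)$ over blocks of length $k$ using the augmented Assumption~\ref{assumption:ave-grad-err} for $\nabla h$, followed by telescoping. Your closing observation is also how the paper resolves it: the inner loops' own $k^{-\alpha}$ floors are not eliminated but absorbed, together with the outer sampling error, into the single $(C')^2C^2k^{-\alpha}$ term.
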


To reach an $\epsilon$ stationary point, we choose:
$E = \frac{128\bar{L}\Delta}{\epsilon^2}, \text{ and }, k = \left(\frac{2(C^{'})^2C^2}{\epsilon^2}\right)^{1/\alpha}$
and $E_l = O(log(\epsilon^{-1}))$. Then the sample complexity of Algorithm~\ref{alg:BiO-Cond} is $EE_lk^2 = O(C^{4/\alpha}\epsilon^{-(2+4/\alpha)})$, where we omit the logarithmic term $E_l$. Specially, if we choose examples independently at each step, then we have sample complexity $O(\epsilon^{-6})$ steps for an $\epsilon$-stationary point. This recovers the rate of DL-SGD algorithm~\cite{hu2023contextual}. Next, if we use some permutation-based without-replacement sampling strategy, then we have sample complexity $O((\max(m,n))^{2q}\epsilon^{-4})$, which can match the state-of-art algorithm RT-MLMC~\cite{hu2023contextual} if we choose an example order with $q=0$~\cite{lu2022grab}.

\textbf{MiniMax and Compositional Optimization}. As special cases of Bilevel Optimization, we can also show that Algorithm~\ref{alg:Comp}, Algorithm~\ref{alg:MiniMax} and Algorithm~\ref{alg:Comp-Cond} converge under similar rate. Please see Corollary~\ref{cor:Comp-cond} and Corollary~\ref{cor:Minimax} for more details.

\section{Applications and Numerical Experiments}
In this section, we verify the effectiveness of the proposed algorithm through a synthetic invariant risk minimization task and two real world bilevel tasks: Hyper-Data Cleaning and Hyper-representation Learning. The code is written in Pytorch. Our experiments were conducted on servers equipped with 8 NVIDIA A5000 GPUs. Experiments are averaged over five independent runs, and the standard deviation is indicated by the shaded area in the plots.

\begin{wrapfigure}{r}{0.4\columnwidth}
\vspace{-0.76in}
\centering
    \includegraphics[width=0.4\columnwidth]{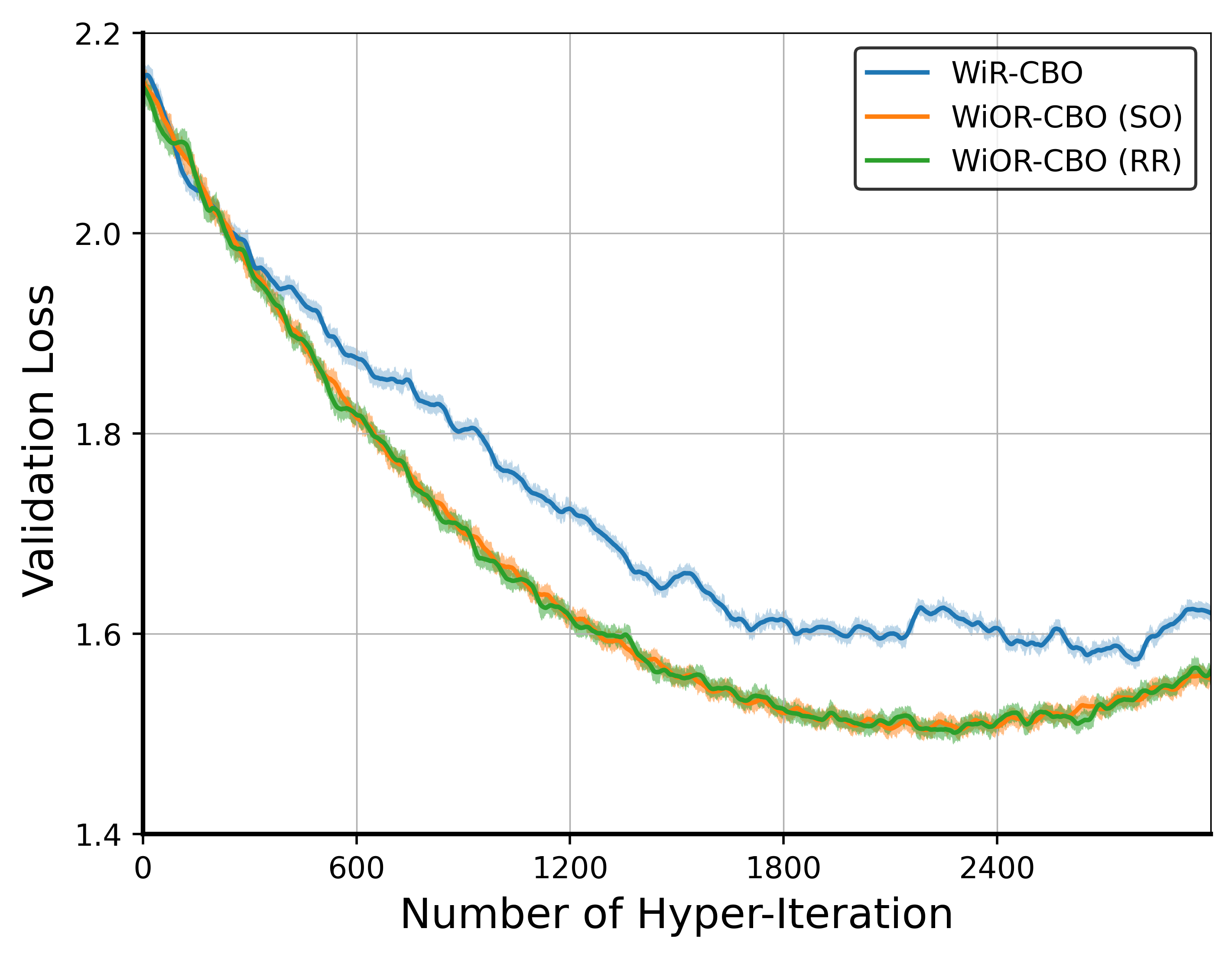}
    \caption{Comparison of different sampling strategies for the Invariant Risk-Minimization task.}
    \label{fig:toy}
\vspace{-0.6in}
\end{wrapfigure}

\subsection{Invariant Risk-Minimization}
In this section, we consider a synthetic invariant risk minimization task~\cite{he2023debiasing}. More specifically, we perform logistic regression over a set of examples $\{(c_i, b_i), i \in [M]\}$ with $c$ be the input and $b$ be the target. However, we does not have access to $c$ but a set of noisy observations $\{c_{j,i}, j \in n_i\}$ for each $c_i$. To learn the coefficient $x$ between $c$ and $b$, we optimize the following objective: 
\begin{align*}
    \underset{x \in \mathbb{R}^p }{\min} &\frac{1}{m}\sum_{i=1}^{m} \log(1 + \exp(-b_i y_x^{(i)})),\\
    &\mbox{s.t.}\ y_{x}^{(i)} = \underset{y\in \mathbb{R}^{d}}{\arg\min}\ \frac{1}{n_{i}}\sum_{j=1}^{n_{i}} \frac{1}{2}\|y - c_{j,i} x\|^2
\end{align*}
This is a conditional bilevel optimization problem and we can solve it using our WiOR-CBO. We generate synthetic data to test the effects of different sample order (sample generation process is described in Appendix~\ref{sec:A}). More specifically, we compare independent-sampling (WiR-CBO), shuffle-once (WiOR-CBO (SO)) and random-reshuffling (WiOR-CBO (RR)). As shown in Figure~\ref{fig:toy}, the two without-replacement sampling methods outperforms the independent sampling one.

\subsection{Hyper-Data Cleaning}
% In this subsection, we consider the Hyper-Data Cleaning task. 
In the Hyper-Data Cleaning task, we are given noisy training dataset whose labels are corrupted by noise, meanwhile, we have a validation set whose labels are not corrupted. Our target is then using the clean validation samples to identify corrupted samples in the training set. More specifically, we learn optimal weights for training samples such that a model learned over the weighted training set performs well on the validation set. We can formulate this task as a bilevel optimization problem (Eq.~\eqref{eq:bi}). A mathematical formulation of the task is included in Appendix~\ref{sec:A}.

\textbf{Dataset and Baselines.} We construct datasets based on MNIST~\cite{lecun1998gradient}. For the training set, we randomly sample 40000 images from the original training dataset and then randomly perturb a fraction of labels of samples. For the validation set, we randomly select 5000 clean images from the original training dataset. In our experiments, we test our WiOR-BO algorithm (Algorithm~\ref{alg:BiO}), with the shuffle-once (WiOR-BO-SO) and random-reshuffling (WiOR-BO-RR) sampling; additionally, we also consider the following non-adaptive bilevel algorithms as baselines: reverse~\cite{franceschi2017forward}, stocBiO~\cite{ji2020provably}, BSA~\cite{ghadimi2018approximation}, AID-CG~\cite{grazzi2020iteration}, MRBO~\cite{yang2021provably}, VRBO~\cite{yang2021provably} and WiR-BO (the variant of our WiOR-BO using independent sampling, which is similar to the single loop algorithms such as SOBA~\cite{dagreou2022framework}, AmIGO~\cite{arbel2021amortized} and FLSA~\cite{li2022fully}). We perform grid search for hyper-parameters and report the best results. 

We summarize the results in Figure~\ref{fig:clean}. In the figure, we compare the validation loss and F1 score \emph{w.r.t.} both Hyper-iterations and running time. As shown in the figure, the two variants of our algorithm WiOR-BO-SO and WiOR-BO-RR get similar performance and they both outperform other baselines. In particular,  note that WiR-BO differs with WiOR-BO-SO (RR) only over the example order, but the later converges much faster in terms of running time, this is partially due to less umber of backward passes needed by without-replacement sampling per iteration.

\begin{figure}[t]
\begin{center}
    \includegraphics[width=0.24\columnwidth]{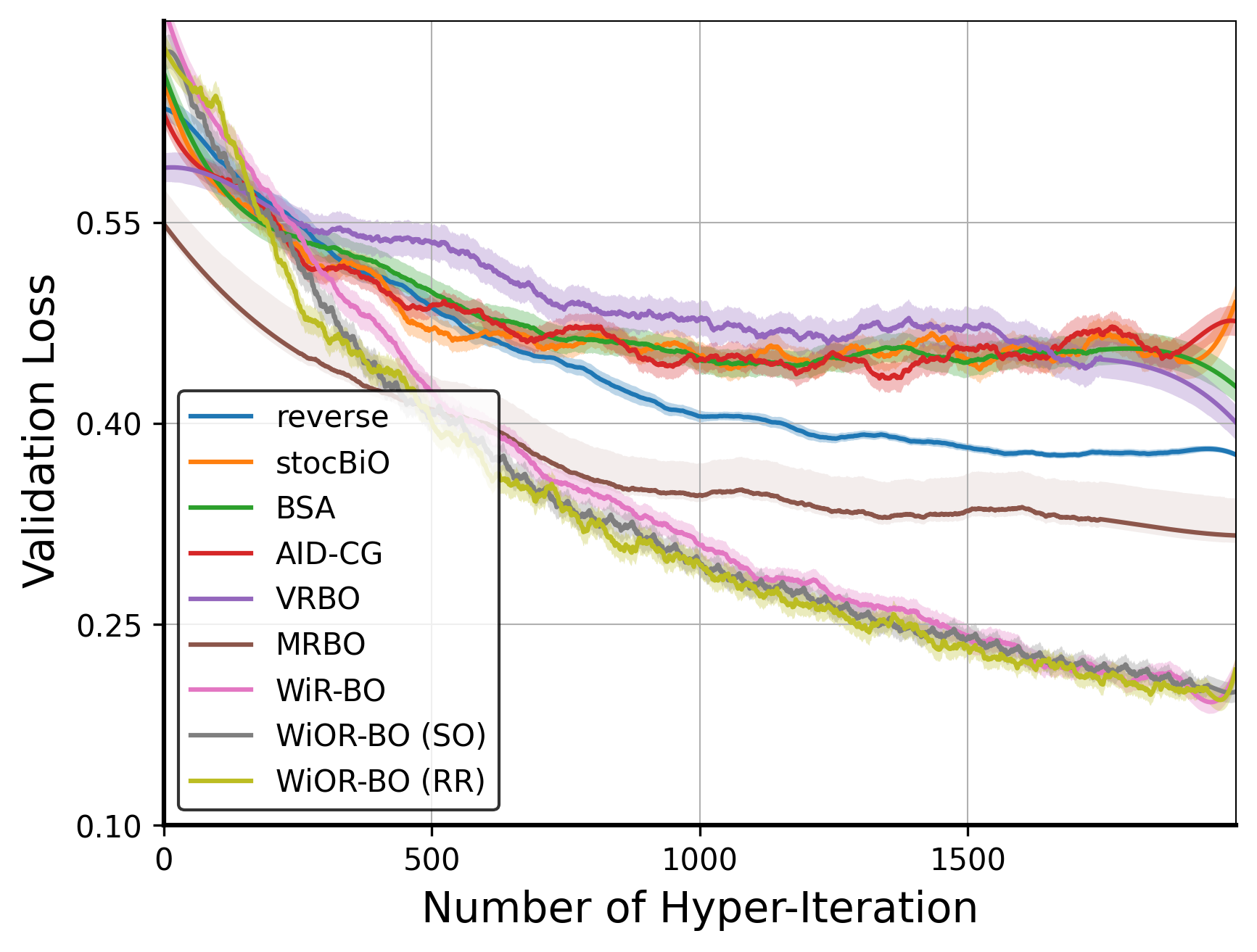}
    \includegraphics[width=0.24\columnwidth]{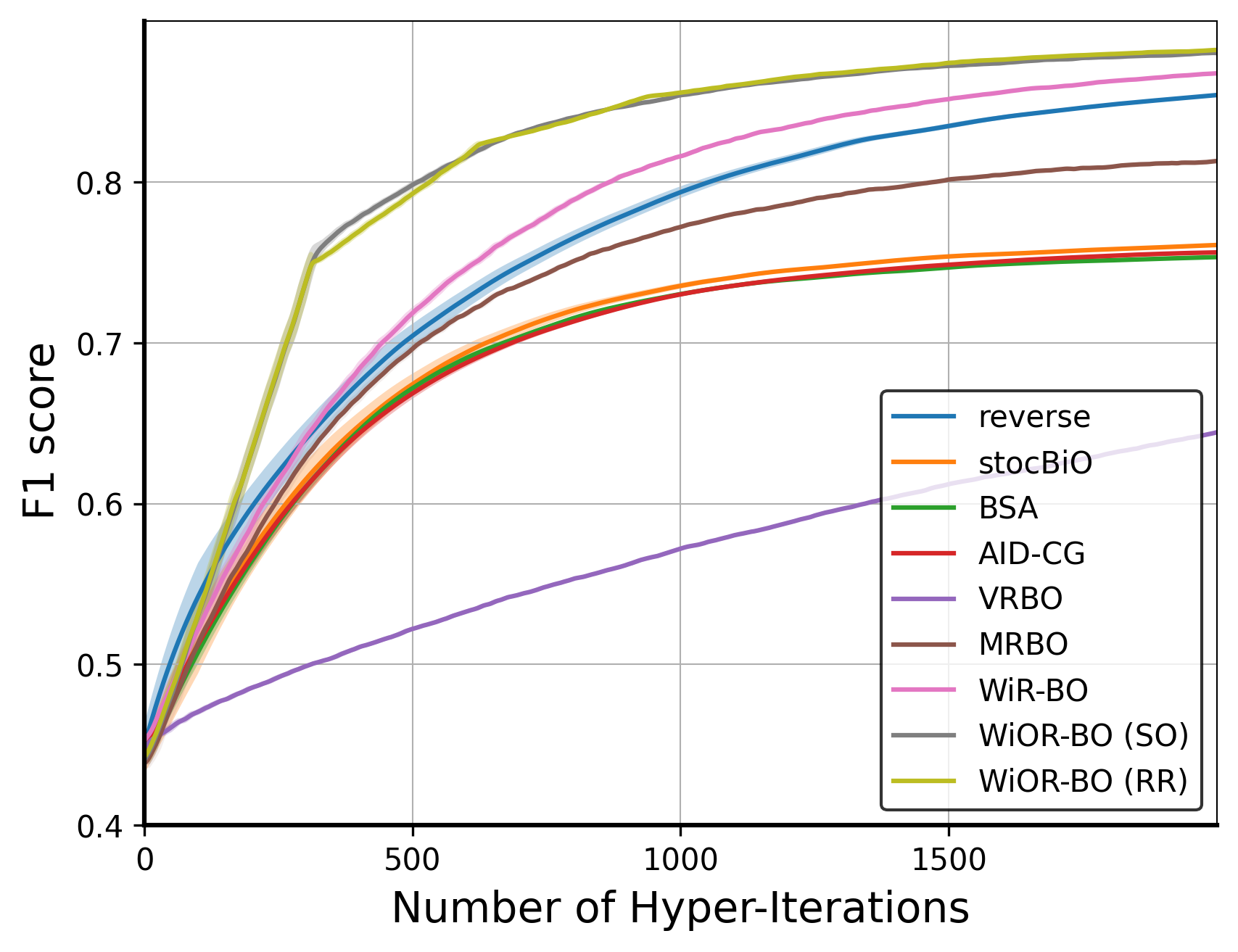}
    \includegraphics[width=0.24\columnwidth]{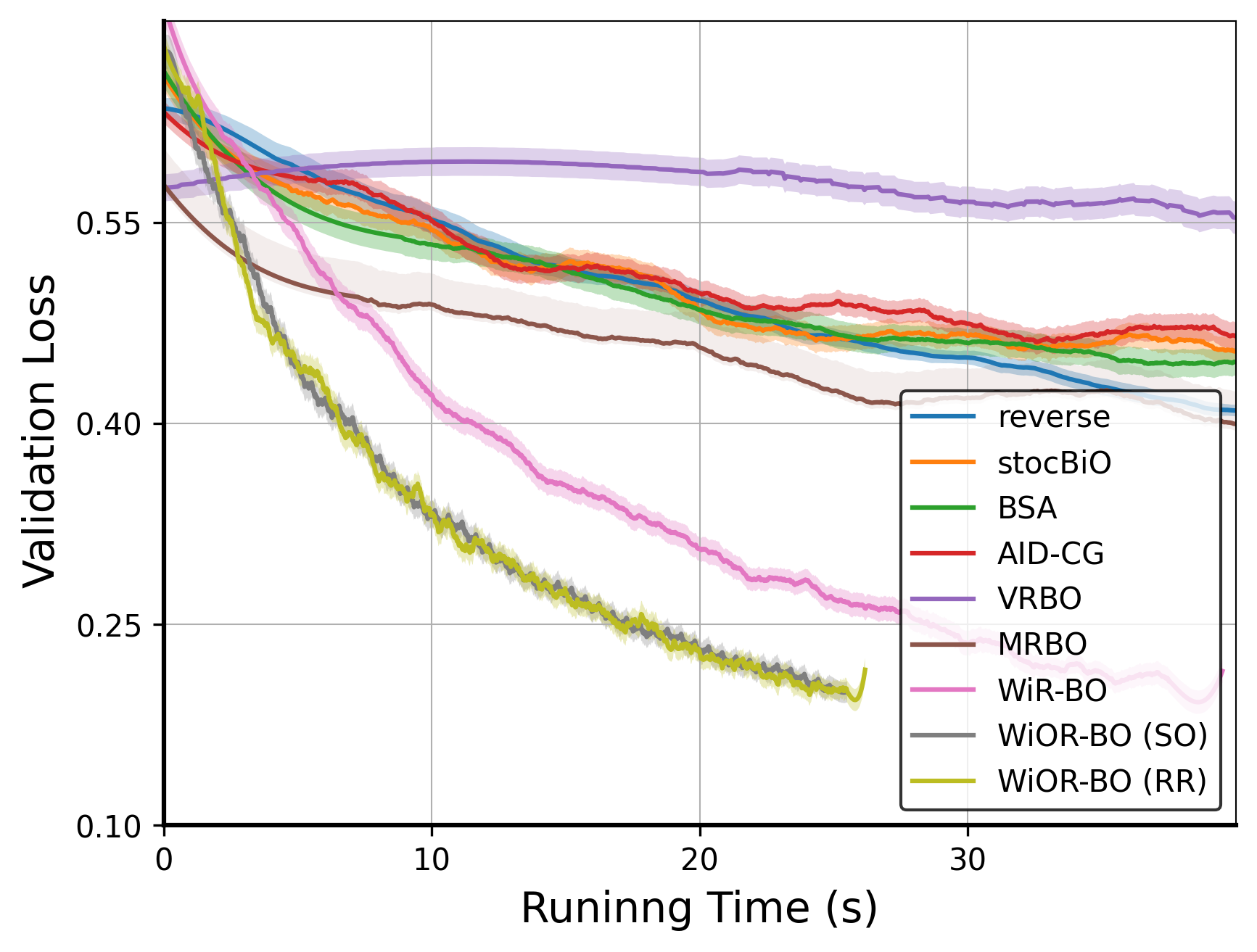}
    \includegraphics[width=0.24\columnwidth]{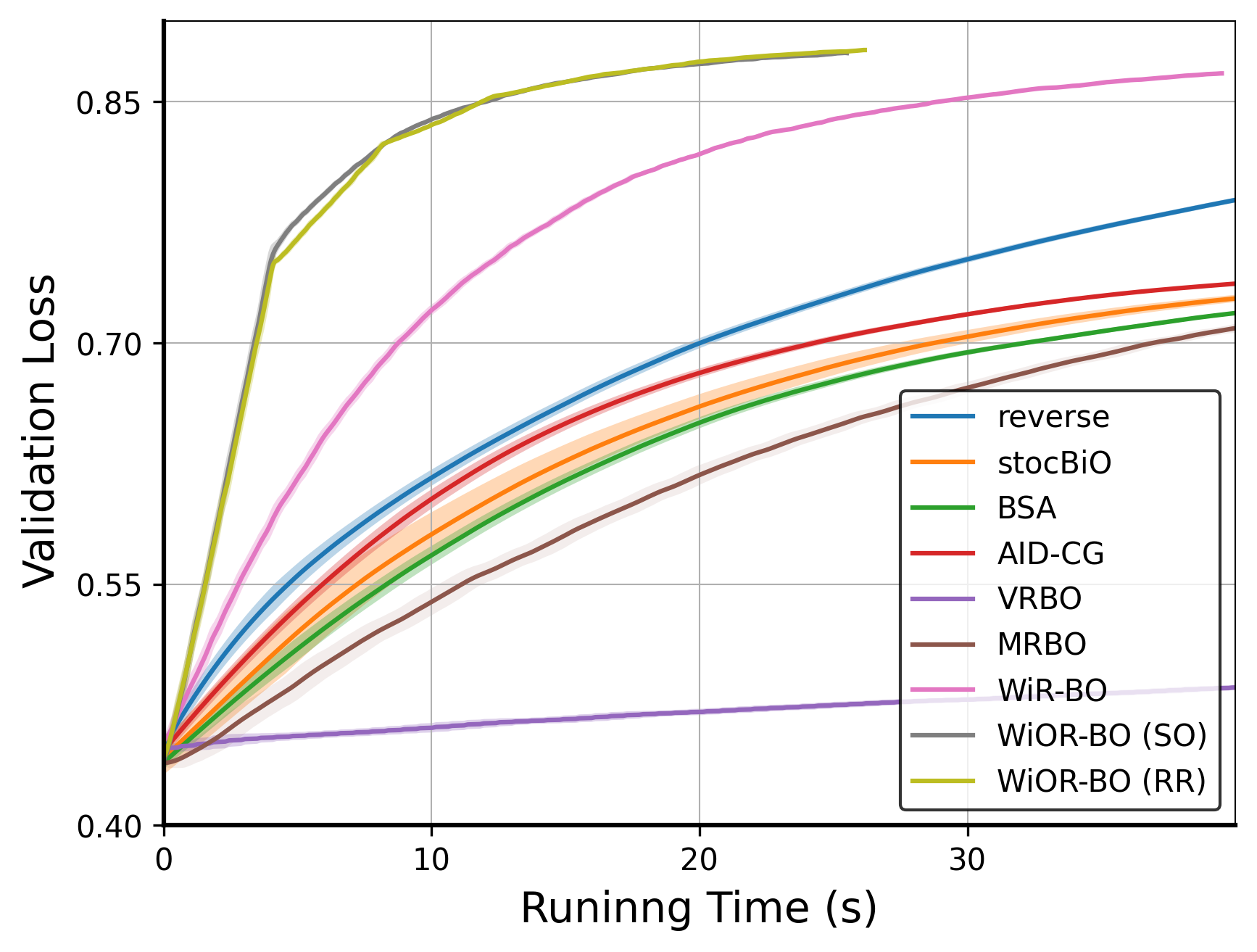}
\end{center}
\caption{\textbf{Comparison of different algorithms for the Hyper-data Cleaning task.} The top two plots show validation error/F1 score vs Number of Hyper-Iterations and the bottom two plots show validation error/F1 score vs Running Time. The fraction of the noisy samples is 0.6.}
\label{fig:clean}
\vspace{-0.1in}
\end{figure}

\begin{figure*}[t]
\begin{center}
    \includegraphics[width=0.24\linewidth]{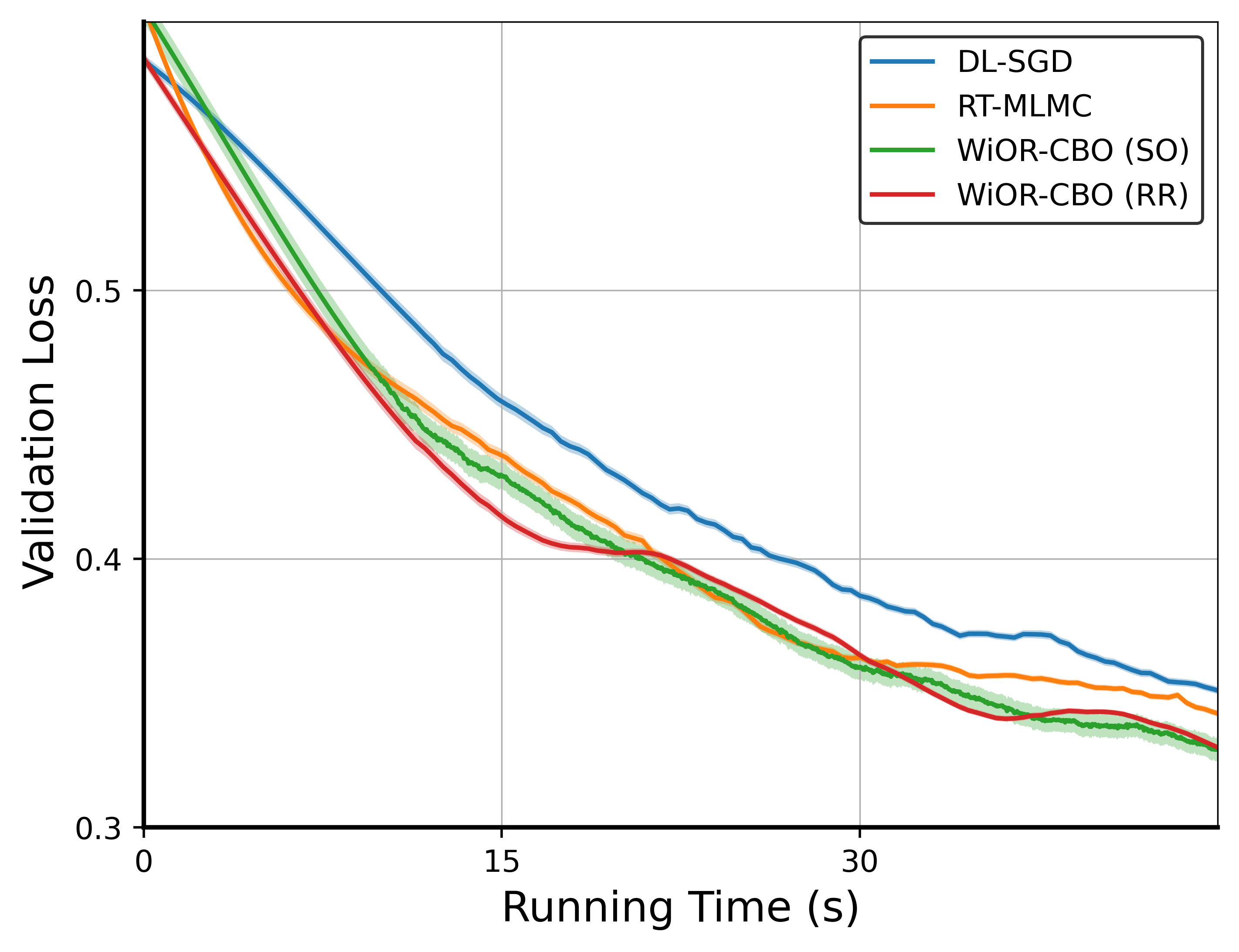}
    \includegraphics[width=0.24\linewidth]{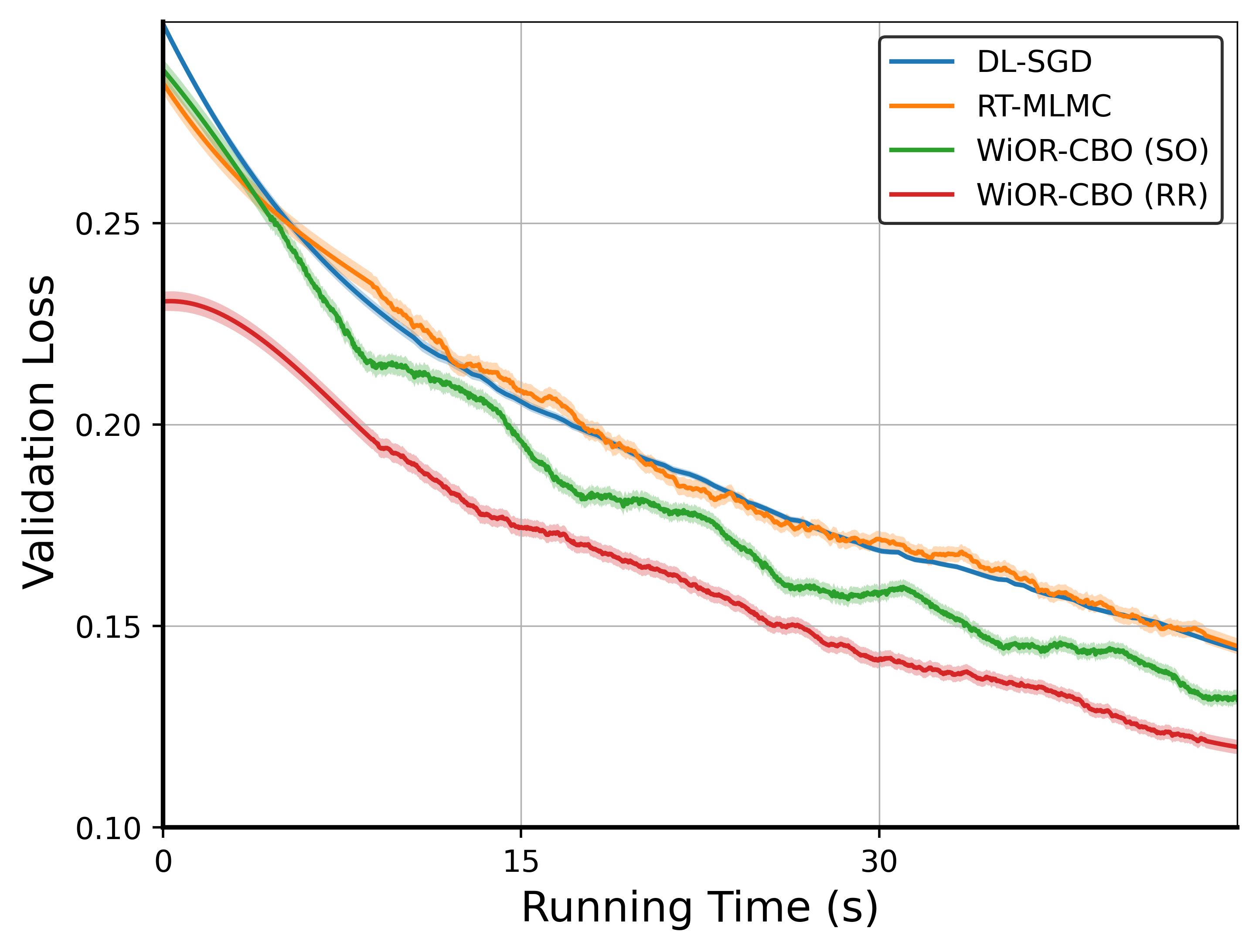}
    \includegraphics[width=0.24\linewidth]{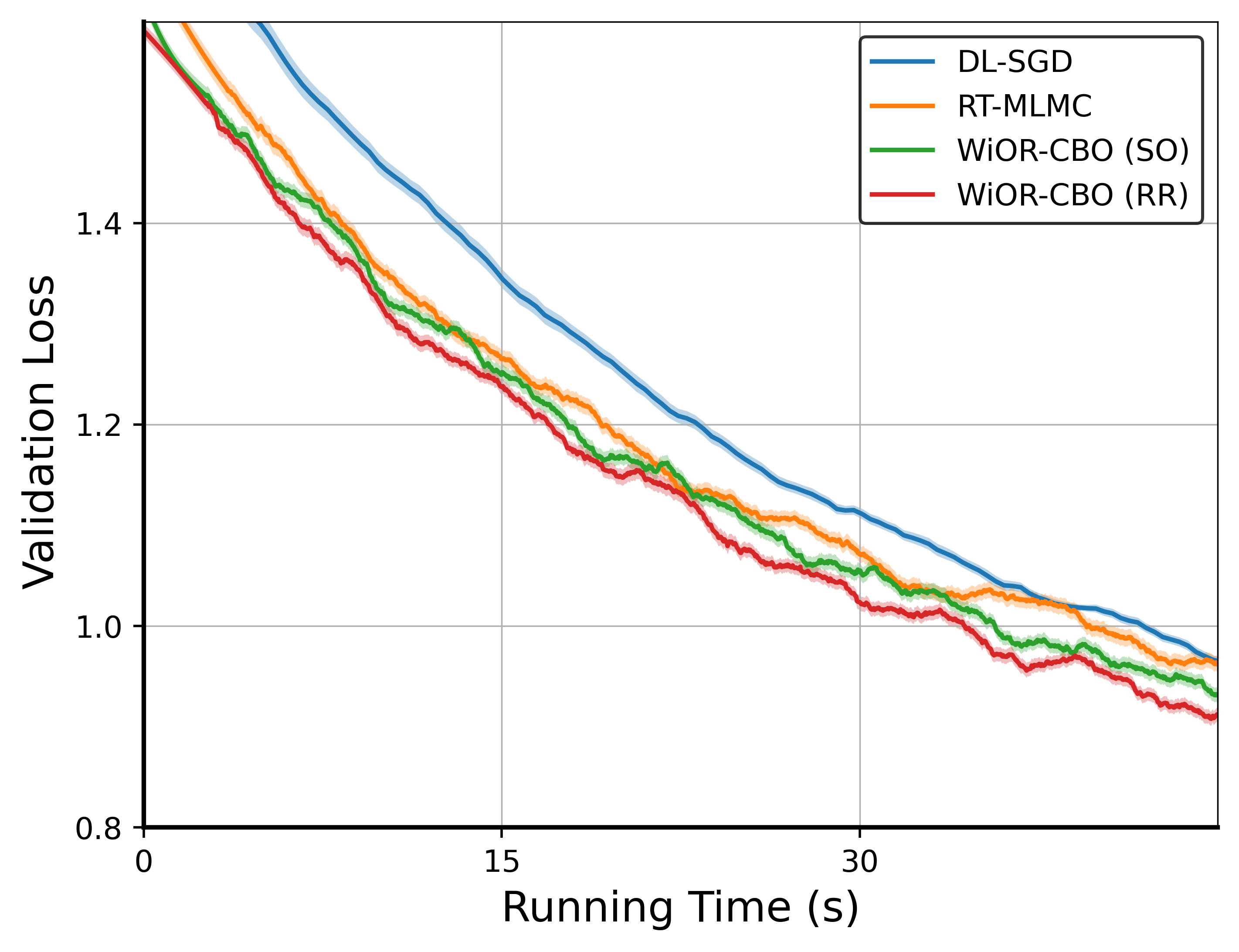}
    \includegraphics[width=0.24\linewidth]{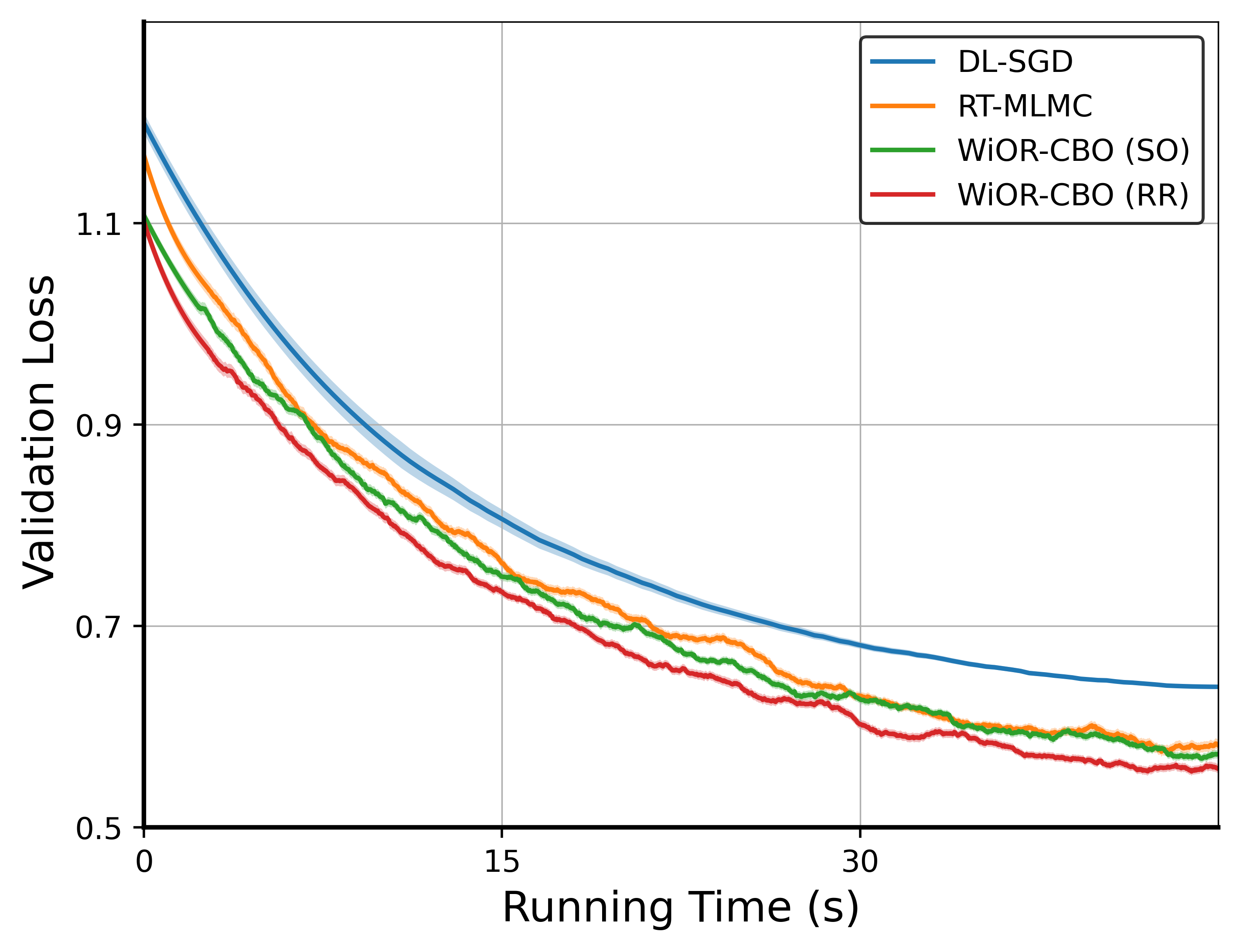}
    \includegraphics[width=0.24\linewidth]{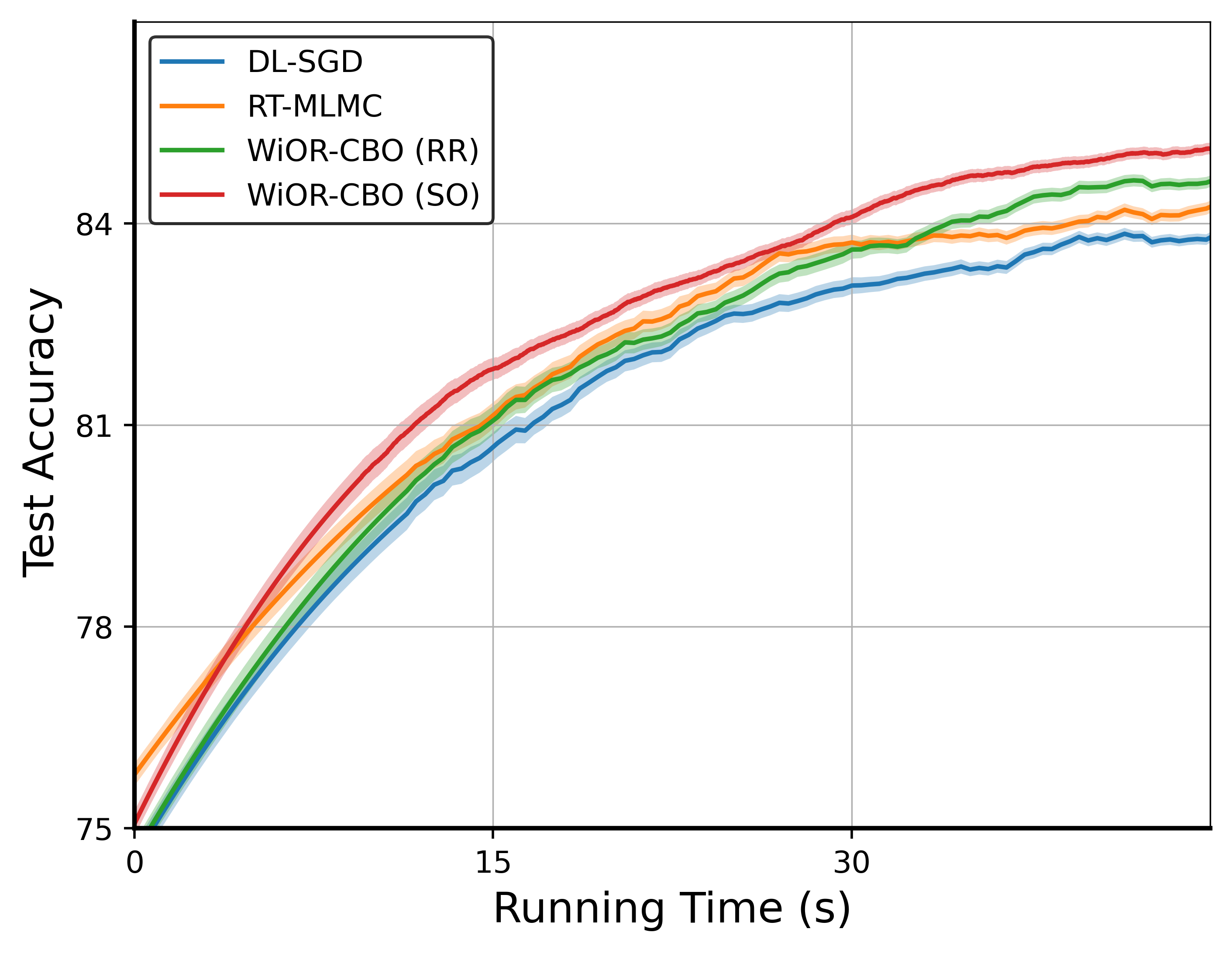}
    \includegraphics[width=0.24\linewidth]{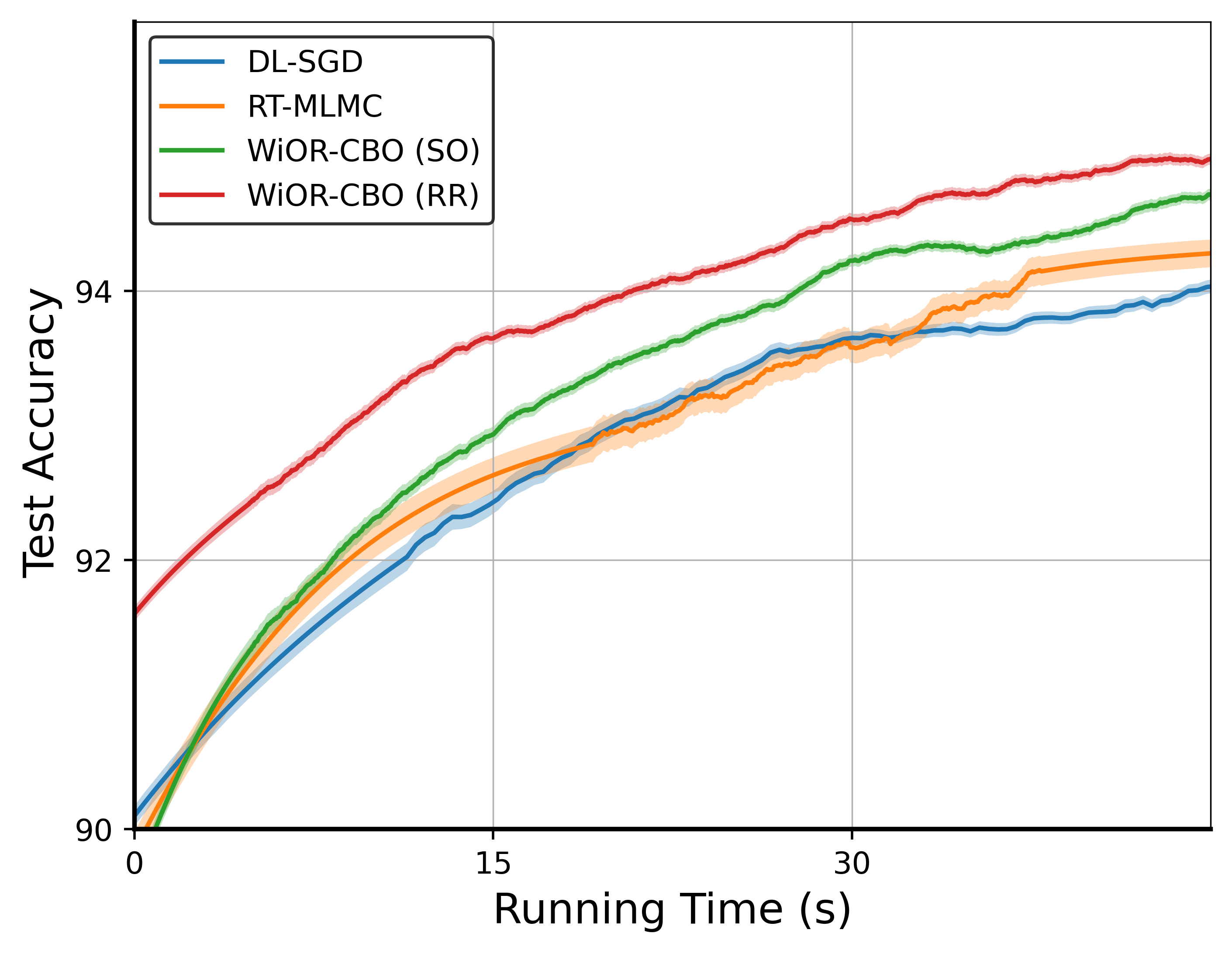}
    \includegraphics[width=0.24\linewidth]{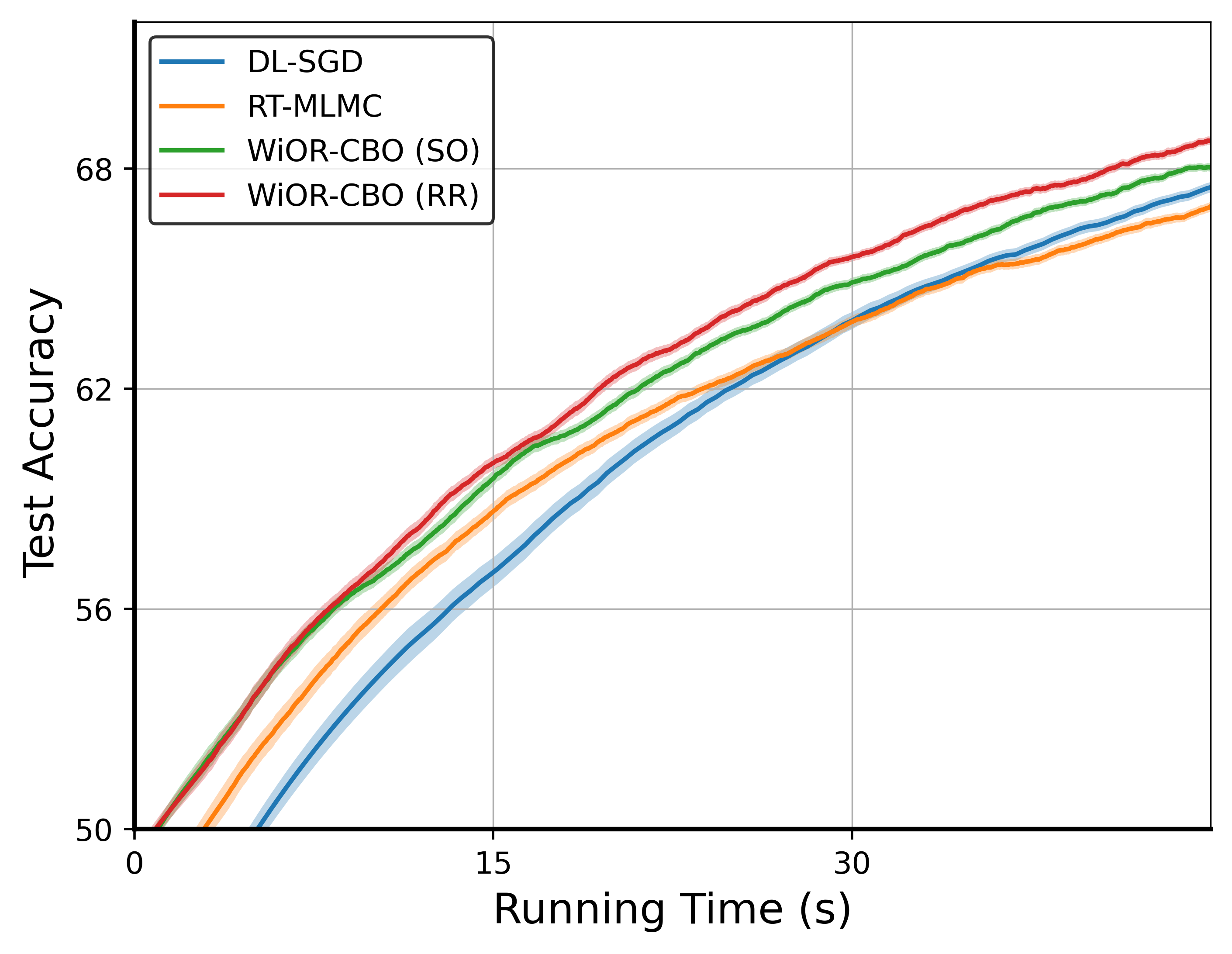}
    \includegraphics[width=0.24\linewidth]{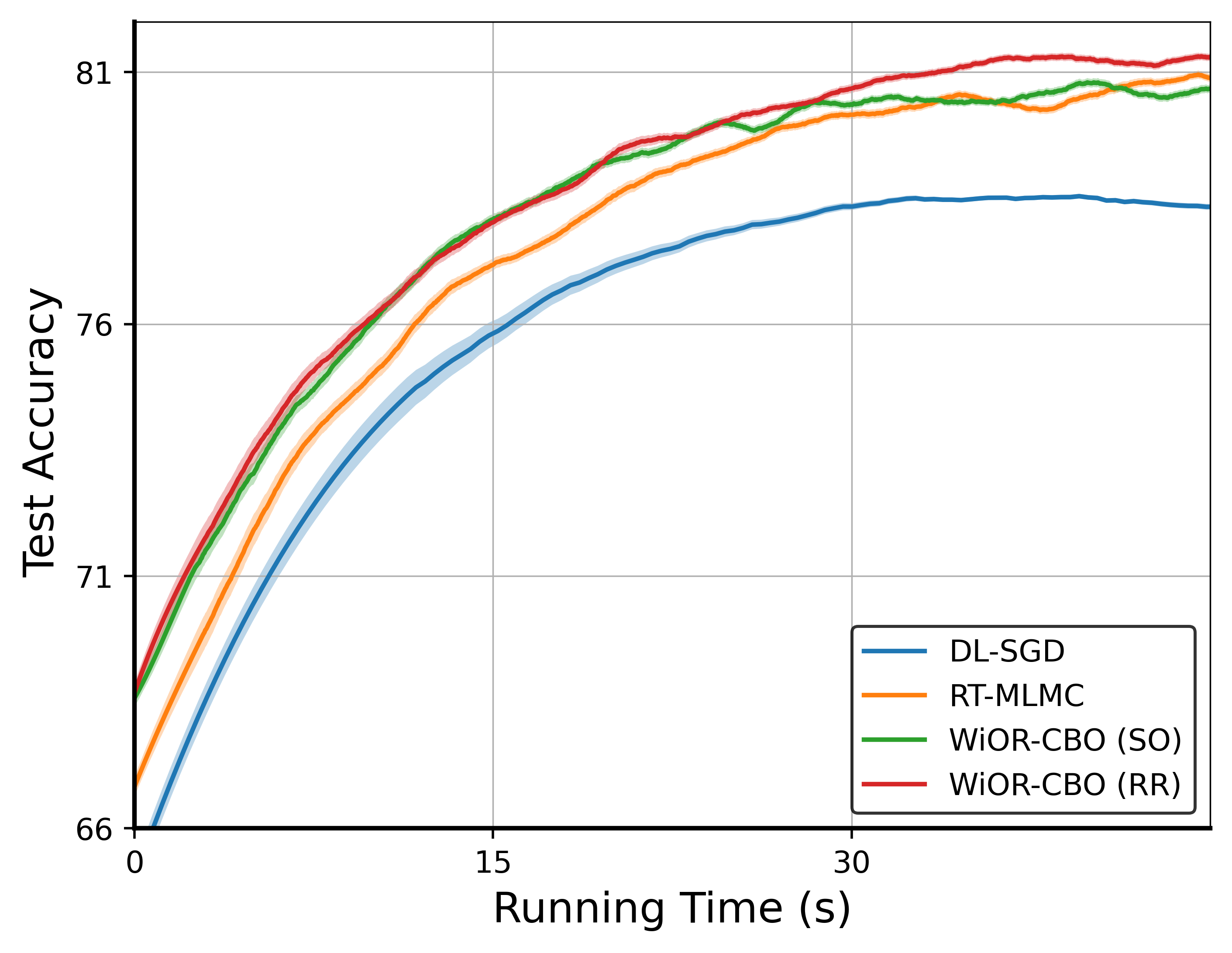}
\end{center}
\caption{\textbf{Comparison of different algorithms for the Hyper-Representation Task over the Omniglot Dataset.} From Left to Right: 5-way-1-shot, 5-way-5-shot, 20-way-1-shot, 20-way-5-shot. }
\label{fig:hyper-rep}
\vspace{-0.3in}
\end{figure*}

\subsection{Hyper-Representation Learning}
In this section, we consider Hyper-representation learning task. In this task, we learn a hyper-representation of the data such that a linear classifier can be learned quickly with a small number of samples. We consider the Omniglot~\cite{lake2011one} and MiniImageNet~\cite{ravi2017optimization} data sets.  This task can be viewed as a conditional bilevel optimization problem (Eq.~\eqref{alg:BiO-Cond}) and  a mathematical formulation of the task is included in Appendix~\ref{sec:A}.

\textbf{Dataset and Baselines.} The details of the datasets are included in Appendix~\ref{sec:A} and we consider N-way-K-shot classification task following~\citep{vinyals2016matching}. In our experiments, we test our WiOR-CBO (Algorithm~\ref{alg:BiO-Cond}), using the shuffle-once (WiOR-BO-SO) and random-reshuffling (WiOR-BO-RR) sampling. Besides, we compare with the following baselines: DL-SGD~\cite{hu2023contextual} and RT-MLMC~\cite{hu2023contextual}. Note that our WiOR-CBO can also use independent sampling, and it has similar performance as DL-SGD. We perform grid search of hyper-parameters for each method and report the best results.

We summarize the experimental results for the Omniglot dataset in Figure~\ref{fig:hyper-rep} and we defer the results for MiniImageNet to the Appendix~\ref{sec:A}. As shown in the figure, our WiOR-CBO SO/RR outperforms DL-SGD and is comparable to the state-of-the-art algorithm RT-MLMC.

\section{Conclusion}
In this work, we investigated example-selection of bilevel optimization. Beyond the classical independent sampling, we assumed an example-order based on without-replacement sampling, such as random-reshuffling and shuffle-once. We proposed the WiOR-BO algorithm for bilevel optimization and show the algorithm converges to an $\epsilon$-stationary point with rate $O(\epsilon^{-3})$. After that, we also discussed the conditional bilevel optimization problems and introduced an algorithm with convergence rate of $O(\epsilon^{-4})$. As special cases of bilevel optimization, we studied the minimax and compositional optimization problems. Finally, we validated the efficacy of our algorithms through one synthetic and two real-world tasks; the numerical results show the superiority of our algorithms.

\section*{Acknowledgments}
This work was partially supported by NSF IIS 2347592, 2347604, 2348159, 2348169, DBI 2405416, CCF 2348306, CNS 2347617.

\bibliography{neurips_2024}

\begin{thebibliography}{10}

\bibitem{arbel2021amortized}
M.~Arbel and J.~Mairal.
\newblock Amortized implicit differentiation for stochastic bilevel optimization.
\newblock {\em arXiv preprint arXiv:2111.14580}, 2021.

\bibitem{bottou2012stochastic}
L.~Bottou.
\newblock Stochastic gradient descent tricks.
\newblock In {\em Neural Networks: Tricks of the Trade: Second Edition}, pages 421--436. Springer, 2012.

\bibitem{bottou2018optimization}
L.~Bottou, F.~E. Curtis, and J.~Nocedal.
\newblock Optimization methods for large-scale machine learning.
\newblock {\em SIAM review}, 60(2):223--311, 2018.

\bibitem{brown2020language}
T.~B. Brown, B.~Mann, N.~Ryder, M.~Subbiah, J.~Kaplan, P.~Dhariwal, A.~Neelakantan, P.~Shyam, G.~Sastry, A.~Askell, S.~Agarwal, A.~Herbert-Voss, G.~Krueger, T.~Henighan, R.~Child, A.~Ramesh, D.~M. Ziegler, J.~Wu, C.~Winter, C.~Hesse, M.~Chen, E.~Sigler, M.~Litwin, S.~Gray, B.~Chess, J.~Clark, C.~Berner, S.~McCandlish, A.~Radford, I.~Sutskever, and D.~Amodei.
\newblock Language models are few-shot learners, 2020.

\bibitem{buchholz2018quasi}
A.~Buchholz, F.~Wenzel, and S.~Mandt.
\newblock Quasi-monte carlo variational inference.
\newblock In {\em International Conference on Machine Learning}, pages 668--677. PMLR, 2018.

\bibitem{chen1999optimal}
D.~Chen and M.~T. Hagan.
\newblock Optimal use of regularization and cross-validation in neural network modeling.
\newblock In {\em IJCNN'99. International Joint Conference on Neural Networks. Proceedings (Cat. No. 99CH36339)}, volume~2, pages 1275--1280. IEEE, 1999.

\bibitem{cho2022sgda}
H.~Cho and C.~Yun.
\newblock Sgda with shuffling: faster convergence for nonconvex-p $\{$$\backslash$L$\}$ minimax optimization.
\newblock {\em arXiv preprint arXiv:2210.05995}, 2022.

\bibitem{choi2019faster}
D.~Choi, A.~Passos, C.~J. Shallue, and G.~E. Dahl.
\newblock Faster neural network training with data echoing.
\newblock {\em arXiv preprint arXiv:1907.05550}, 2019.

\bibitem{dagreou2022framework}
M.~Dagr{\'e}ou, P.~Ablin, S.~Vaiter, and T.~Moreau.
\newblock A framework for bilevel optimization that enables stochastic and global variance reduction algorithms.
\newblock {\em arXiv preprint arXiv:2201.13409}, 2022.

\bibitem{pmlr-v238-dagreou24a}
M.~Dagr\'{e}ou, T.~Moreau, S.~Vaiter, and P.~Ablin.
\newblock A lower bound and a near-optimal algorithm for bilevel empirical risk minimization.
\newblock In S.~Dasgupta, S.~Mandt, and Y.~Li, editors, {\em Proceedings of The 27th International Conference on Artificial Intelligence and Statistics}, volume 238 of {\em Proceedings of Machine Learning Research}, pages 82--90. PMLR, 02--04 May 2024.

\bibitem{das2022sampling}
A.~Das, B.~Schölkopf, and M.~Muehlebach.
\newblock Sampling without replacement leads to faster rates in finite-sum minimax optimization, 2022.

\bibitem{do2007efficient}
C.~B. Do, C.-S. Foo, and A.~Y. Ng.
\newblock Efficient multiple hyperparameter learning for log-linear models.
\newblock In {\em NIPS}, volume 2007, pages 377--384. Citeseer, 2007.

\bibitem{domke2012generic}
J.~Domke.
\newblock Generic methods for optimization-based modeling.
\newblock In {\em Artificial Intelligence and Statistics}, pages 318--326. PMLR, 2012.

\bibitem{ferris1991finite}
M.~C. Ferris and O.~L. Mangasarian.
\newblock Finite perturbation of convex programs.
\newblock {\em Applied Mathematics and Optimization}, 23(1):263--273, 1991.

\bibitem{finn2017model}
C.~Finn, P.~Abbeel, and S.~Levine.
\newblock Model-agnostic meta-learning for fast adaptation of deep networks.
\newblock In {\em Proceedings of the 34th International Conference on Machine Learning-Volume 70}, pages 1126--1135. JMLR. org, 2017.

\bibitem{franceschi2017forward}
L.~Franceschi, M.~Donini, P.~Frasconi, and M.~Pontil.
\newblock Forward and reverse gradient-based hyperparameter optimization.
\newblock In {\em Proceedings of the 34th International Conference on Machine Learning-Volume 70}, pages 1165--1173. JMLR. org, 2017.

\bibitem{ghadimi2018approximation}
S.~Ghadimi and M.~Wang.
\newblock Approximation methods for bilevel programming.
\newblock {\em arXiv preprint arXiv:1802.02246}, 2018.

\bibitem{grazzi2020iteration}
R.~Grazzi, L.~Franceschi, M.~Pontil, and S.~Salzo.
\newblock On the iteration complexity of hypergradient computation.
\newblock In {\em International Conference on Machine Learning}, pages 3748--3758. PMLR, 2020.

\bibitem{gurbuzbalaban2021random}
M.~G{\"u}rb{\"u}zbalaban, A.~Ozdaglar, and P.~A. Parrilo.
\newblock Why random reshuffling beats stochastic gradient descent.
\newblock {\em Mathematical Programming}, 186:49--84, 2021.

\bibitem{pmlr-v97-haochen19a}
J.~Haochen and S.~Sra.
\newblock Random shuffling beats {SGD} after finite epochs.
\newblock In K.~Chaudhuri and R.~Salakhutdinov, editors, {\em Proceedings of the 36th International Conference on Machine Learning}, volume~97 of {\em Proceedings of Machine Learning Research}, pages 2624--2633. PMLR, 09--15 Jun 2019.

\bibitem{he2023debiasing}
L.~He and S.~P. Kasiviswanathan.
\newblock Debiasing conditional stochastic optimization.
\newblock {\em arXiv preprint arXiv:2304.10613}, 2023.

\bibitem{hong2020two}
M.~Hong, H.-T. Wai, Z.~Wang, and Z.~Yang.
\newblock A two-timescale framework for bilevel optimization: Complexity analysis and application to actor-critic.
\newblock {\em arXiv preprint arXiv:2007.05170}, 2020.

\bibitem{hu2023contextual}
Y.~Hu, J.~Wang, Y.~Xie, A.~Krause, and D.~Kuhn.
\newblock Contextual stochastic bilevel optimization, 2023.

\bibitem{huang2022enhanced}
F.~Huang, J.~Li, S.~Gao, and H.~Huang.
\newblock Enhanced bilevel optimization via bregman distance.
\newblock {\em Advances in Neural Information Processing Systems}, 35:28928--28939, 2022.

\bibitem{ji2020provably}
K.~Ji, J.~Yang, and Y.~Liang.
\newblock Provably faster algorithms for bilevel optimization and applications to meta-learning.
\newblock {\em arXiv preprint arXiv:2010.07962}, 2020.

\bibitem{khanduri2021near}
P.~Khanduri, S.~Zeng, M.~Hong, H.-T. Wai, Z.~Wang, and Z.~Yang.
\newblock A near-optimal algorithm for stochastic bilevel optimization via double-momentum.
\newblock {\em arXiv preprint arXiv:2102.07367}, 2021.

\bibitem{lake2011one}
B.~Lake, R.~Salakhutdinov, J.~Gross, and J.~Tenenbaum.
\newblock One shot learning of simple visual concepts.
\newblock In {\em Proceedings of the annual meeting of the cognitive science society}, volume~33, 2011.

\bibitem{larsen1996design}
J.~Larsen, L.~K. Hansen, C.~Svarer, and M.~Ohlsson.
\newblock Design and regularization of neural networks: the optimal use of a validation set.
\newblock In {\em Neural Networks for Signal Processing VI. Proceedings of the 1996 IEEE Signal Processing Society Workshop}, pages 62--71. IEEE, 1996.

\bibitem{lecun1998gradient}
Y.~LeCun, L.~Bottou, Y.~Bengio, and P.~Haffner.
\newblock Gradient-based learning applied to document recognition.
\newblock {\em Proceedings of the IEEE}, 86(11):2278--2324, 1998.

\bibitem{li2022fully}
J.~Li, B.~Gu, and H.~Huang.
\newblock A fully single loop algorithm for bilevel optimization without hessian inverse.
\newblock In {\em Proceedings of the AAAI Conference on Artificial Intelligence}, volume~36, pages 7426--7434, 2022.

\bibitem{li2024communication}
J.~Li, F.~Huang, and H.~Huang.
\newblock Communication-efficient federated bilevel optimization with global and local lower level problems.
\newblock {\em Advances in Neural Information Processing Systems}, 36, 2024.

\bibitem{liu2024moreau}
R.~Liu, Z.~Liu, W.~Yao, S.~Zeng, and J.~Zhang.
\newblock Moreau envelope for nonconvex bi-level optimization: A single-loop and hessian-free solution strategy, 2024.

\bibitem{lu2022grab}
Y.~Lu, W.~Guo, and C.~M. De~Sa.
\newblock Grab: Finding provably better data permutations than random reshuffling.
\newblock {\em Advances in Neural Information Processing Systems}, 35:8969--8981, 2022.

\bibitem{lu2022general}
Y.~Lu, S.~Y. Meng, and C.~De~Sa.
\newblock A general analysis of example-selection for stochastic gradient descent.
\newblock In {\em International Conference on Learning Representations (ICLR)}, volume~10, 2022.

\bibitem{maclaurin2015gradient}
D.~Maclaurin, D.~Duvenaud, and R.~Adams.
\newblock Gradient-based hyperparameter optimization through reversible learning.
\newblock In {\em International Conference on Machine Learning}, pages 2113--2122, 2015.

\bibitem{mishchenko2020random}
K.~Mishchenko, A.~Khaled, and P.~Richt{\'a}rik.
\newblock Random reshuffling: Simple analysis with vast improvements.
\newblock {\em Advances in Neural Information Processing Systems}, 33:17309--17320, 2020.

\bibitem{mohtashami2022characterizing}
A.~Mohtashami, S.~Stich, and M.~Jaggi.
\newblock Characterizing \& finding good data orderings for fast convergence of sequential gradient methods, 2022.

\bibitem{nguyen2021unified}
L.~M. Nguyen, Q.~Tran-Dinh, D.~T. Phan, P.~H. Nguyen, and M.~Van~Dijk.
\newblock A unified convergence analysis for shuffling-type gradient methods.
\newblock {\em The Journal of Machine Learning Research}, 22(1):9397--9440, 2021.

\bibitem{okuno2018hyperparameter}
T.~Okuno, A.~Takeda, and A.~Kawana.
\newblock Hyperparameter learning via bilevel nonsmooth optimization.
\newblock {\em arXiv preprint arXiv:1806.01520}, 2018.

\bibitem{pedregosa2016hyperparameter}
F.~Pedregosa.
\newblock Hyperparameter optimization with approximate gradient.
\newblock {\em arXiv preprint arXiv:1602.02355}, 2016.

\bibitem{ravi2017optimization}
S.~Ravi and H.~Larochelle.
\newblock Optimization as a model for few-shot learning.
\newblock In {\em International conference on learning representations}, 2017.

\bibitem{pmlr-v23-recht12}
B.~Recht and C.~Re.
\newblock Toward a noncommutative arithmetic-geometric mean inequality: Conjectures, case-studies, and consequences.
\newblock In S.~Mannor, N.~Srebro, and R.~C. Williamson, editors, {\em Proceedings of the 25th Annual Conference on Learning Theory}, volume~23 of {\em Proceedings of Machine Learning Research}, pages 11.1--11.24, Edinburgh, Scotland, 25--27 Jun 2012. PMLR.

\bibitem{sabach2017first}
S.~Sabach and S.~Shtern.
\newblock A first order method for solving convex bilevel optimization problems.
\newblock {\em SIAM Journal on Optimization}, 27(2):640--660, 2017.

\bibitem{safran2020good}
I.~Safran and O.~Shamir.
\newblock How good is sgd with random shuffling?
\newblock In {\em Conference on Learning Theory}, pages 3250--3284. PMLR, 2020.

\bibitem{shaban2018truncated}
A.~Shaban, C.-A. Cheng, N.~Hatch, and B.~Boots.
\newblock Truncated back-propagation for bilevel optimization.
\newblock {\em arXiv preprint arXiv:1810.10667}, 2018.

\bibitem{pmlr-v139-shamsian21a}
A.~Shamsian, A.~Navon, E.~Fetaya, and G.~Chechik.
\newblock Personalized federated learning using hypernetworks.
\newblock In M.~Meila and T.~Zhang, editors, {\em Proceedings of the 38th International Conference on Machine Learning}, volume 139 of {\em Proceedings of Machine Learning Research}, pages 9489--9502. PMLR, 18--24 Jul 2021.

\bibitem{solodov2007explicit}
M.~Solodov.
\newblock An explicit descent method for bilevel convex optimization.
\newblock {\em Journal of Convex Analysis}, 14(2):227, 2007.

\bibitem{vinyals2016matching}
O.~Vinyals, C.~Blundell, T.~Lillicrap, D.~Wierstra, et~al.
\newblock Matching networks for one shot learning.
\newblock In {\em Advances in neural information processing systems}, pages 3630--3638, 2016.

\bibitem{Wang_2016}
M.~Wang, E.~X. Fang, and H.~Liu.
\newblock Stochastic compositional gradient descent: algorithms for minimizing compositions of expected-value functions.
\newblock {\em Mathematical Programming}, 161(1–2):419–449, May 2016.

\bibitem{welling2009herding}
M.~Welling.
\newblock Herding dynamical weights to learn.
\newblock In {\em Proceedings of the 26th Annual International Conference on Machine Learning}, pages 1121--1128, 2009.

\bibitem{willoughby1979solutions}
R.~A. Willoughby.
\newblock Solutions of ill-posed problems (an tikhonov and vy arsenin).
\newblock {\em SIAM Review}, 21(2):266, 1979.

\bibitem{yang2021provably}
J.~Yang, K.~Ji, and Y.~Liang.
\newblock Provably faster algorithms for bilevel optimization.
\newblock {\em arXiv preprint arXiv:2106.04692}, 2021.

\bibitem{yang2023achieving}
Y.~Yang, P.~Xiao, and K.~Ji.
\newblock Achieving ${O}(\epsilon^{-1.5})$ complexity in hessian/jacobian-free stochastic bilevel optimization, 2023.

\bibitem{yang2024simfbo}
Y.~Yang, P.~Xiao, and K.~Ji.
\newblock Simfbo: Towards simple, flexible and communication-efficient federated bilevel learning.
\newblock {\em Advances in Neural Information Processing Systems}, 36, 2024.

\bibitem{ye2022bome}
M.~Ye, B.~Liu, S.~Wright, P.~Stone, and Q.~Liu.
\newblock Bome! bilevel optimization made easy: A simple first-order approach, 2022.

\bibitem{yudropout}
P.~Yu, J.~Li, and H.~Huang.
\newblock Dropout enhanced bilevel training.
\newblock In {\em The Twelfth International Conference on Learning Representations}, 2023.

\bibitem{zintgraf2019fast}
L.~Zintgraf, K.~Shiarli, V.~Kurin, K.~Hofmann, and S.~Whiteson.
\newblock Fast context adaptation via meta-learning.
\newblock In {\em International Conference on Machine Learning}, pages 7693--7702. PMLR, 2019.

\end{thebibliography}
\bibliographystyle{abbrv}

%%%%%%%%%%%%%%%%%%%%%%%%%%%%%%%%%%%%%%%%%%%%%%%%%%%%%%%%%%%%

\appendix
\newpage

\section{More Details for Numerical Experiments}\label{sec:A}
In this section, we introduce more details of the experiments.

\subsection{Invariant Risk-Minimization}\label{sec:exp-ivr}
The data generation process is as follows: we first randomly sample a ground truth $x^{*}$, then we randomly generate $m=1000$ input variables $c$, and calculate the ground truth label $b =cx$, then for each $c_i$, we generate a set of $n=100$ noisy observations by adding Guassian noise of scale 0.1. We also add a $L_2$ regularization term of strength 0.1 for the outer problem. During trainging, we use both the inner and outer learing rates of 0.001.

\subsection{Hyper-Data Cleaning}\label{sec:exp-clean}
The formulation of the problem is as follows:
\begin{align*}
    \underset{x \in \mathbb{R}^p }{\min}\ h(x) &\coloneqq  f(x , y_{x}) = \frac{1}{N^{(val)}}\sum_{n=1}^{N^{(val)}} \Theta(y_x; \xi_{n}^{val})\nonumber\\
    &\mbox{s.t.}\ y_{x} = \underset{y\in \mathbb{R}^{d}}{\arg\min}\; g(x,y) = \sum_{n=1}^{N^{(tr)}} x_{n} \Theta(y; \xi_{n}^{tr})
\end{align*}
In the above formulation, we have a pair of (noisy) training set $\{\xi_{n}^{tr}\}_{n=1}^{N^{(tr)}}$ and validation set $\{\xi_{n}^{val}\}_{n=1}^{N^{(val)}}$, and $x_{n}, n \in [N^{(tr)}]$ are weights for training samples, $y$ is the parameter of a model, and we denote the model by $\Theta$. Note that $y_x$ is the model learned over the weighted training set. We fit a model with 3 fully connected layers for the MNIST dataset. We also use $L_2$ regularization with coefficient $10^{-3}$ to satisfy the strong convexity condition. In the Experiments, we choose inner learning rate ($\gamma$, $\rho$) as 0.1 and outer learning rate $\eta$ 1000.

\begin{figure*}[ht]
\begin{center}
    \includegraphics[width=0.24\linewidth]{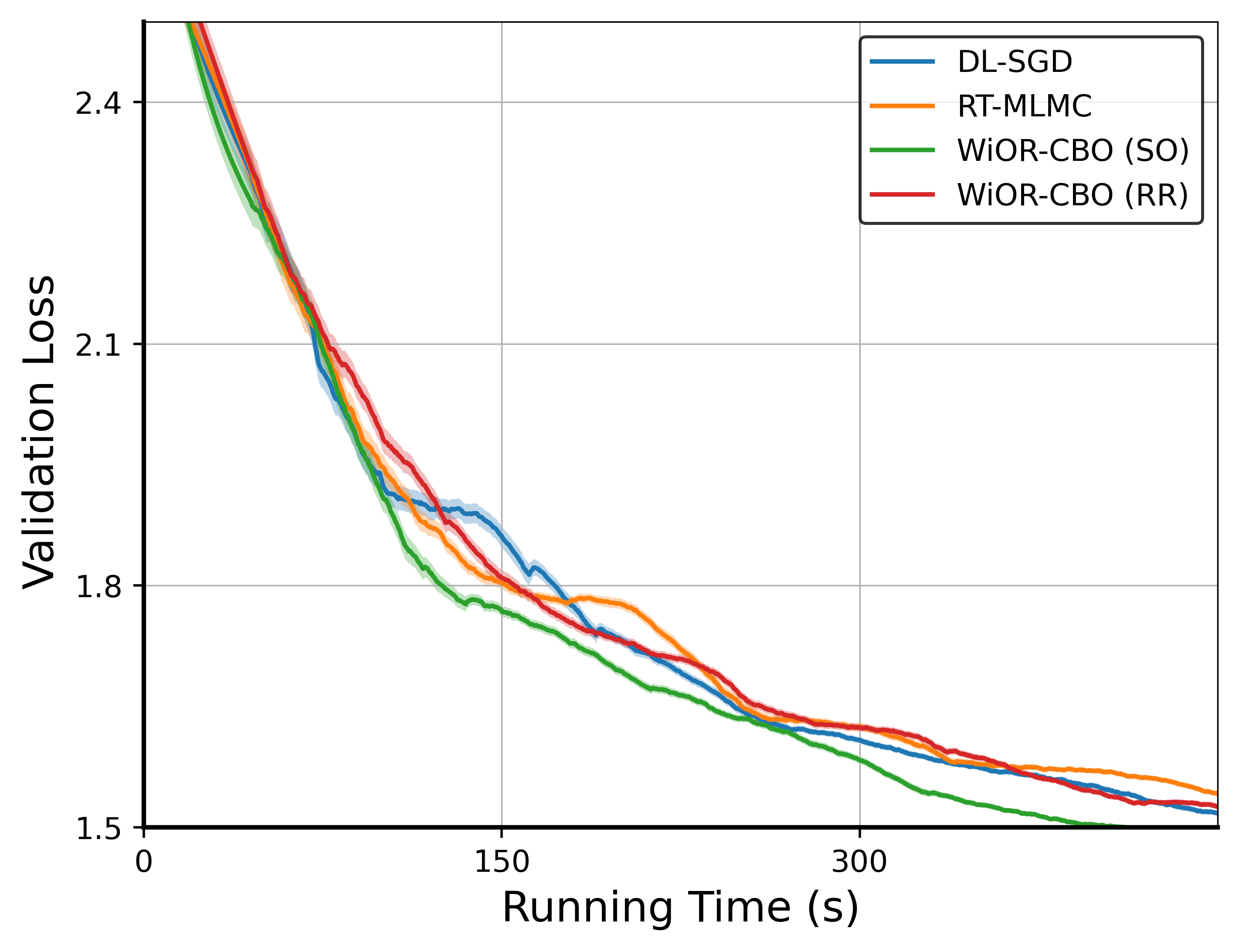}
    \includegraphics[width=0.24\linewidth]{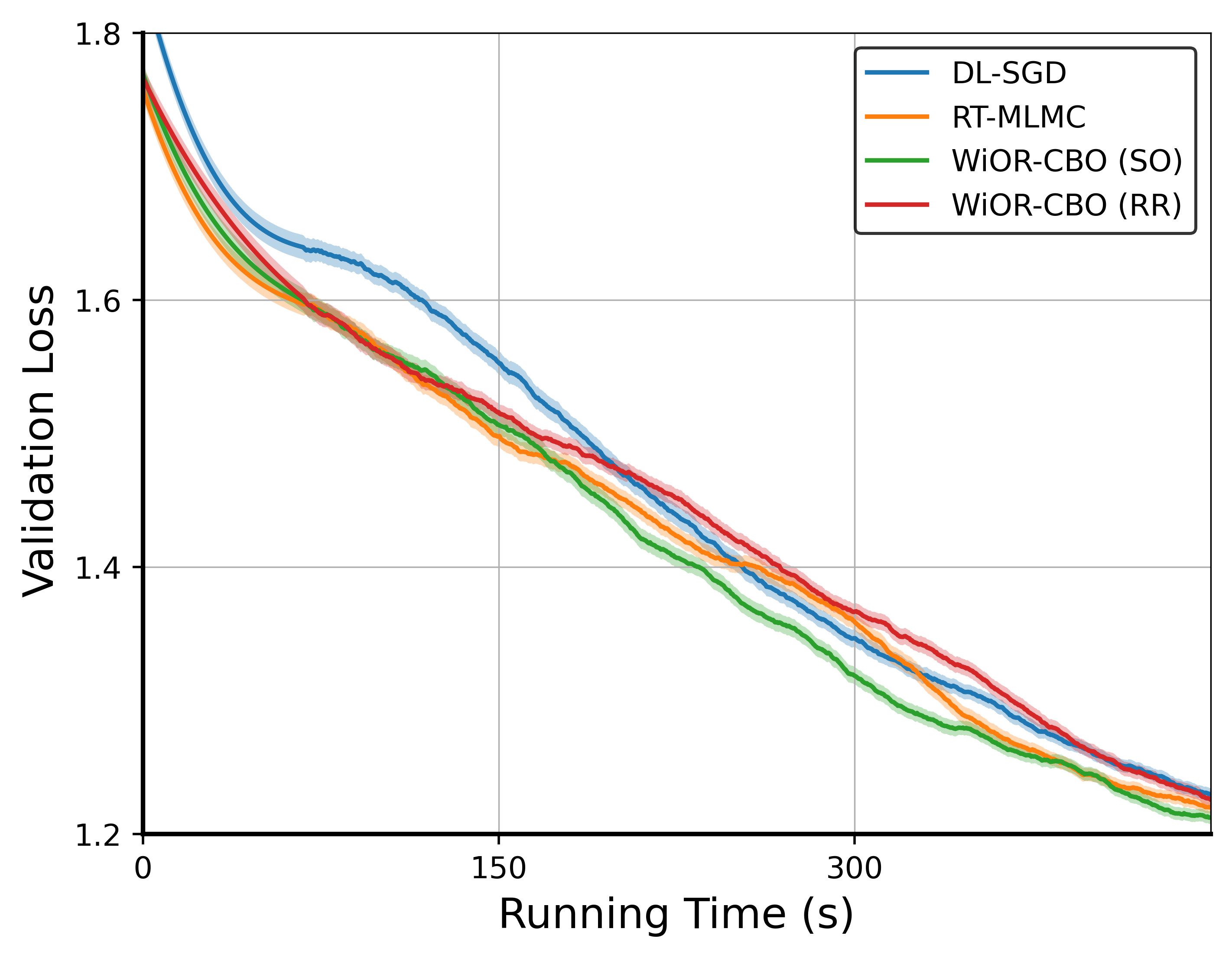}
    \includegraphics[width=0.24\linewidth]{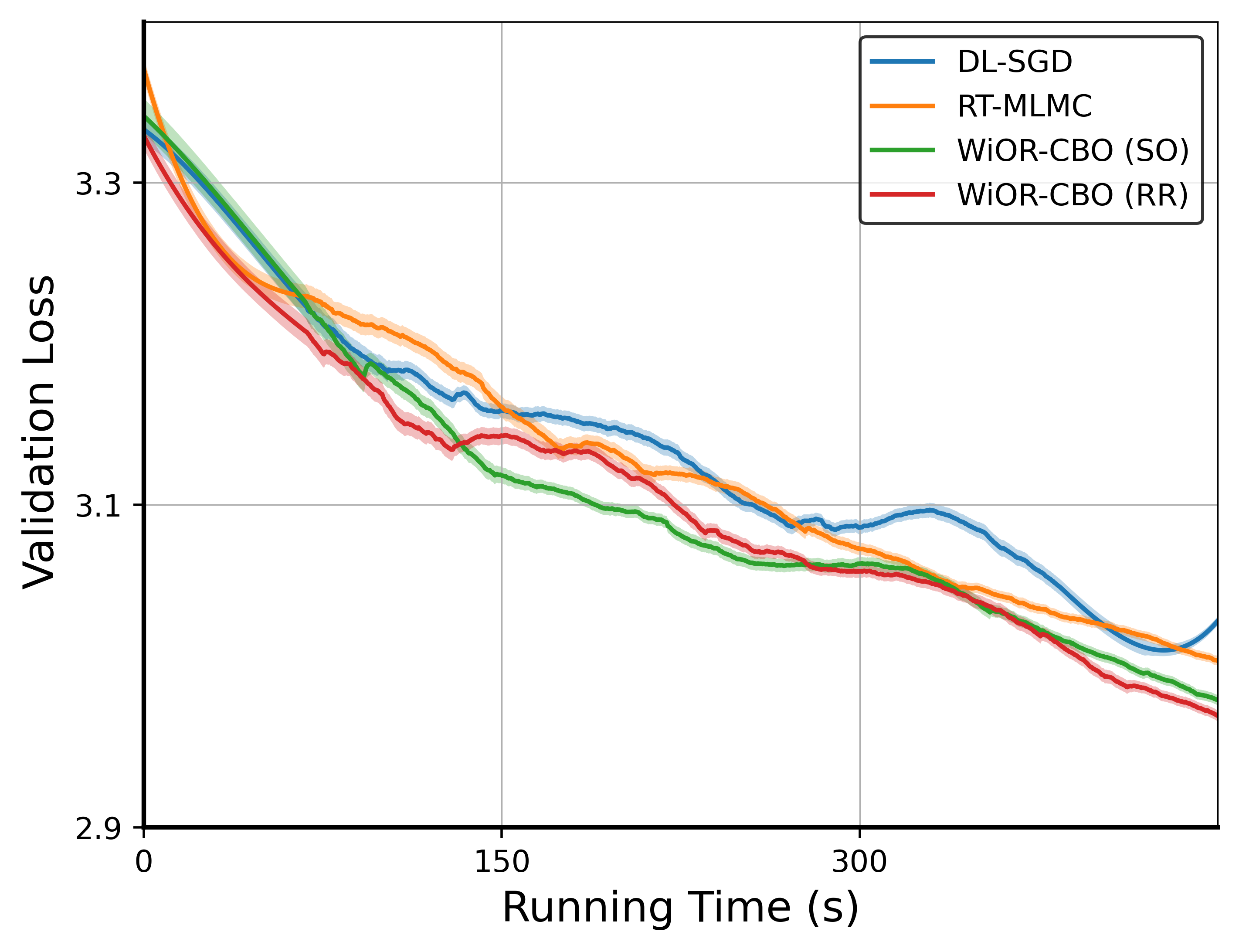}
    \includegraphics[width=0.24\linewidth]{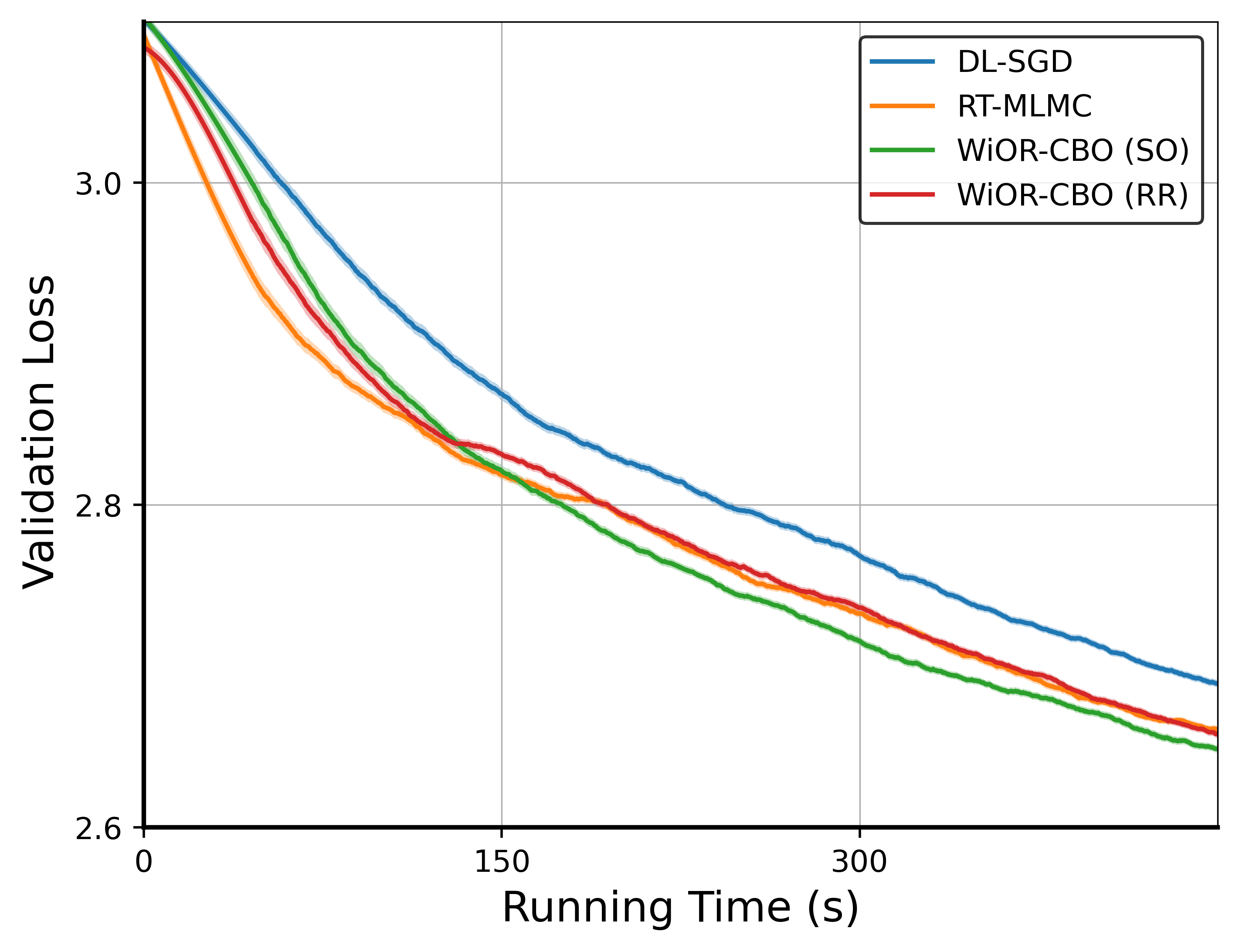}
    \includegraphics[width=0.24\linewidth]{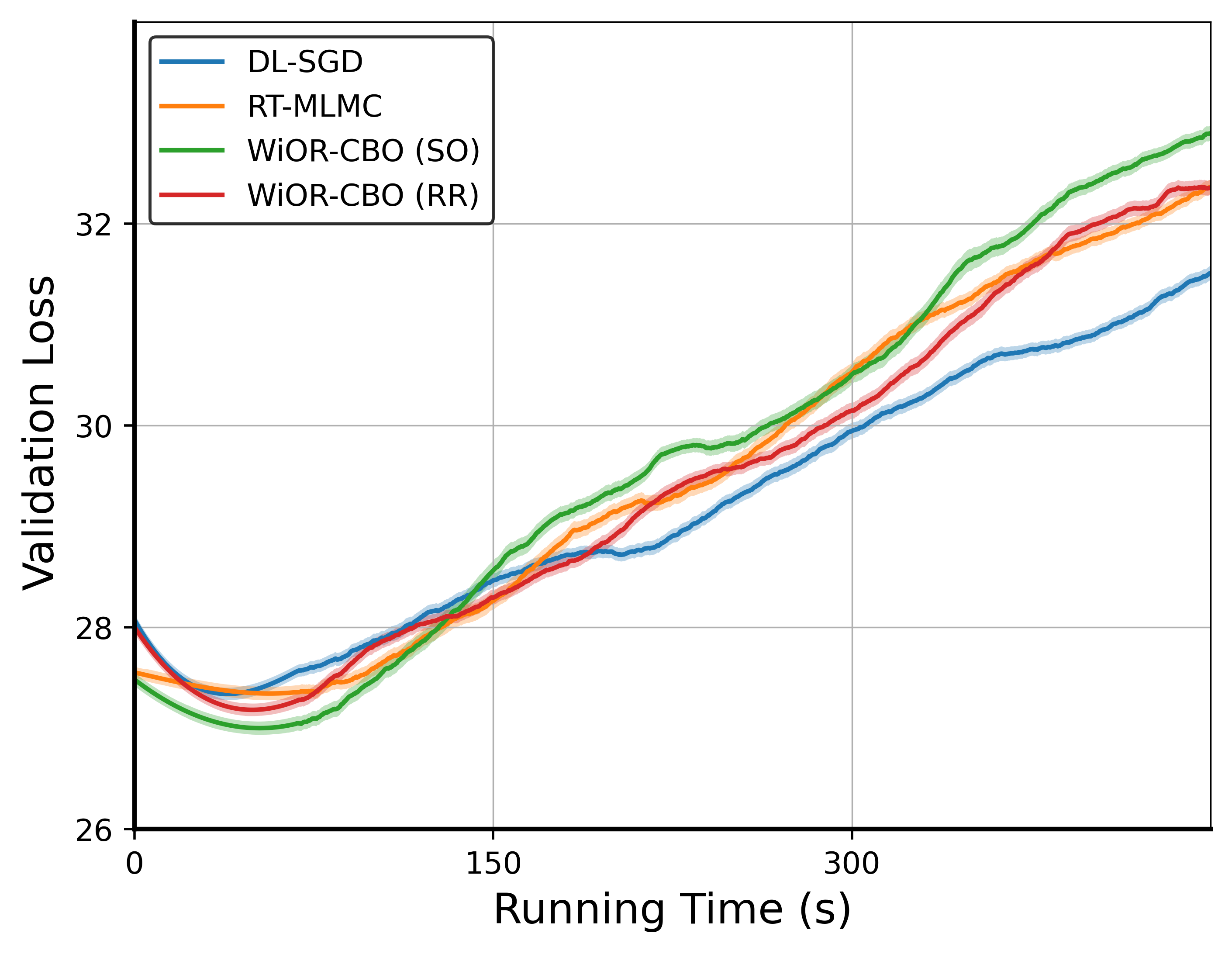}
    \includegraphics[width=0.24\linewidth]{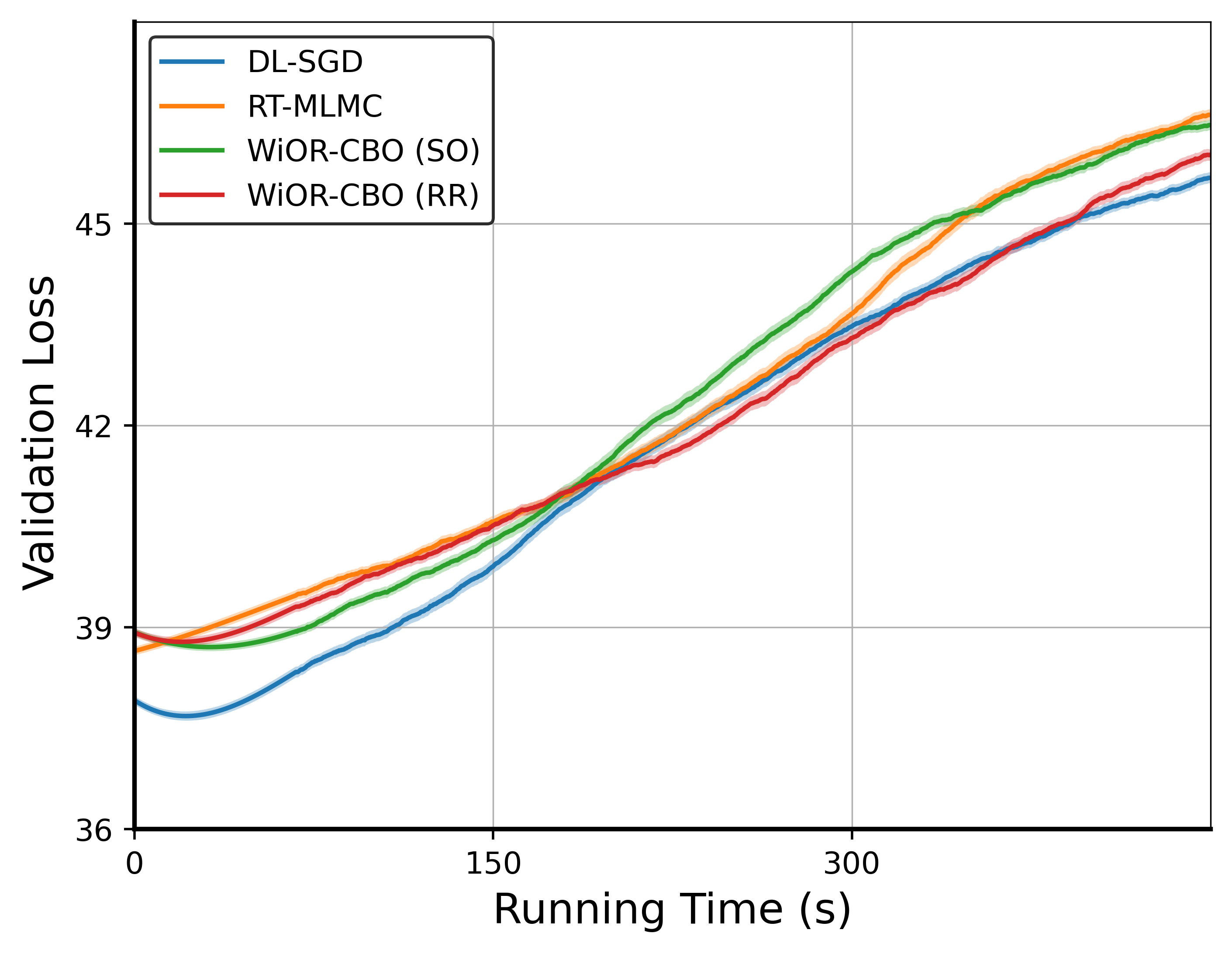}
    \includegraphics[width=0.24\linewidth]{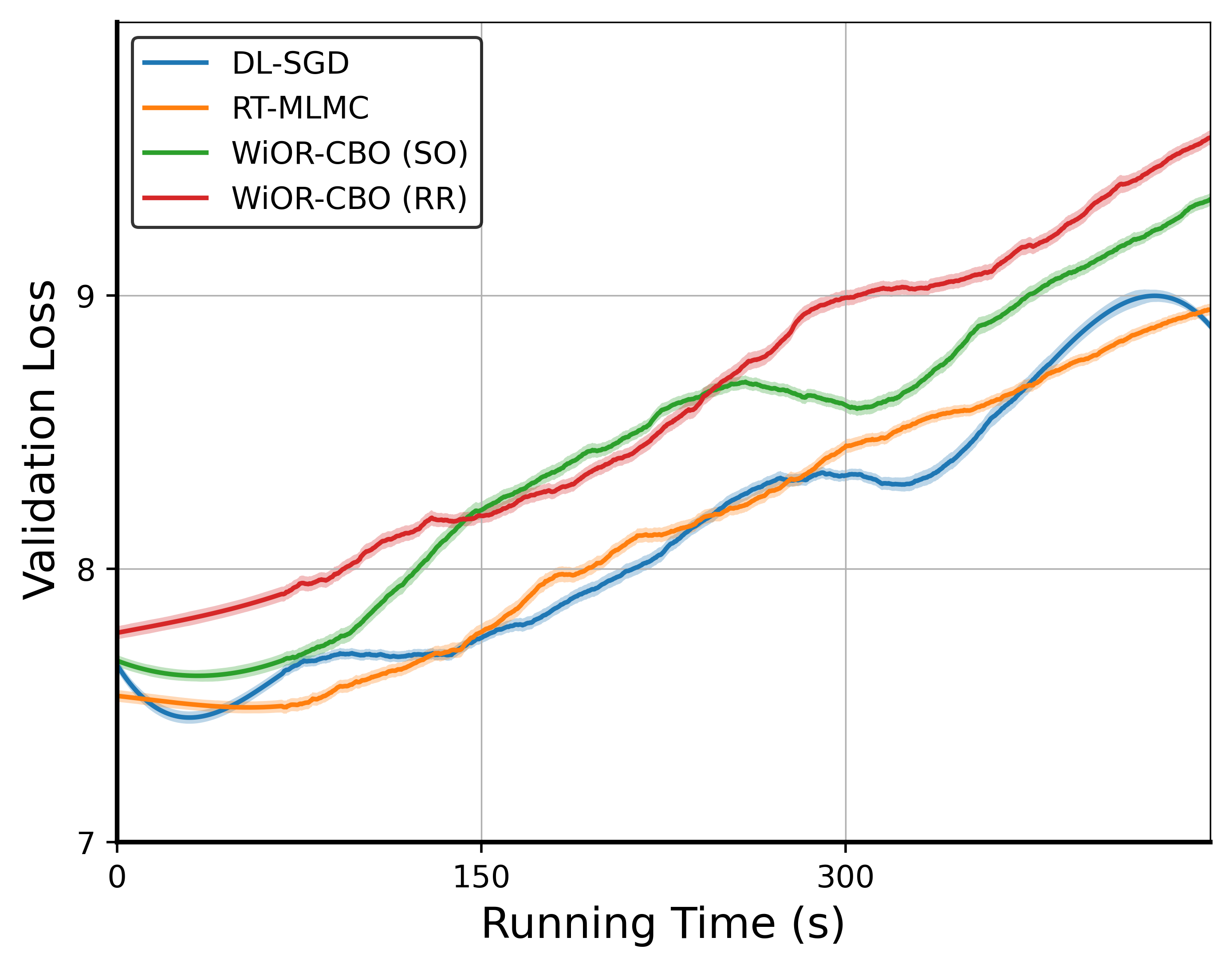}
    \includegraphics[width=0.24\linewidth]{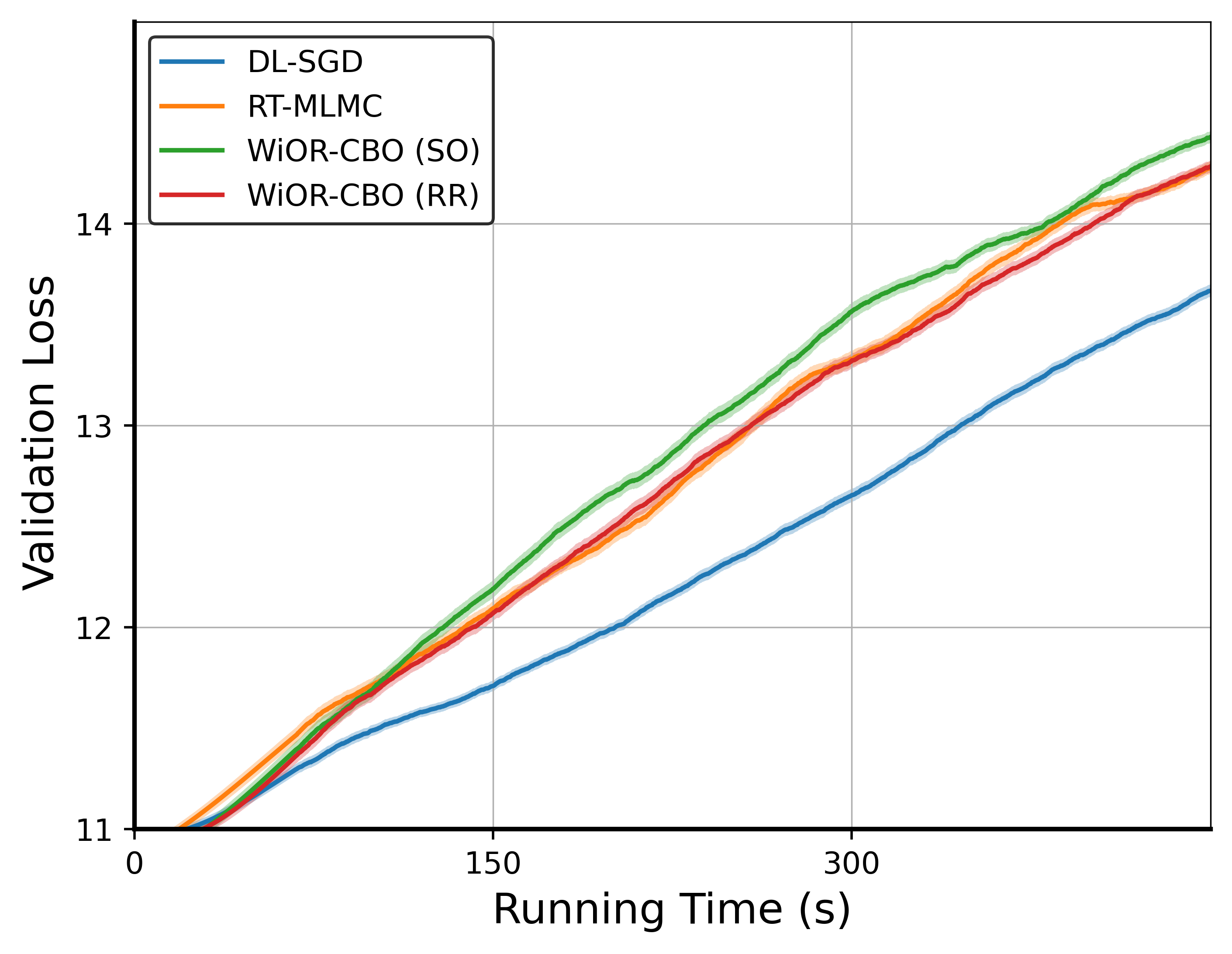}
\end{center}
\caption{Comparison of different algorithms for the Hyper-Representation Task over the MiniImageNet Dataset. From Left to Right: 5-way-1-shot, 5-way-5-shot, 20-way-1-shot, 20-way-5-shot. }
\label{fig:hyper-rep-mini}
\vspace{-0.1in}
\end{figure*}

\subsection{Hyper-Representation Learning}\label{sec:exp-hyper}
\begin{align*}
    \underset{x \in \mathbb{R}^p }{\min}\ h(x) &\coloneqq  f(x , y_{x}) = \frac{1}{N}\sum_{n=1}^{N} \big(\frac{1}{N_{n}^{val}}\sum_{i=1}^{N_{n}^{val}} \Theta(x, y_x^{(\mathcal{T}_{n})}; \xi_{i}^{val})\big) \nonumber\\
    &\mbox{s.t.}\ y_{x}^{(\mathcal{T}_{n})} = \underset{y\in \mathbb{R}^{d}}{\arg\min}\; g^{(\mathcal{T}_{n})}(x,y) = \frac{1}{N_{n}^{tr}}\sum_{i=1}^{N_{n}^{tr}} \Theta(x, y; \xi_{i}^{tr})
\end{align*}
In the above formulation, we have $N$ tasks and each task $\mathcal{T}_{n}$ is defined by a pair of training set $\{\xi_{i}^{tr}\}_{i=1}^{N_{n}^{tr}}$ and validation set $\{\xi_{i}^{val}\}_{i=1}^{N_{n}^{val}}$. $\Theta$ defines the model, $x$ is the parameter of the backbone model and $y$ is the parameter of the linear classifier. In summary, the lower level problem is to learn the optimal linear classifier $y$ given the backbone $x$, and the upper level problem is to learn the optimal backbone parameter $x$.

The Omniglot dataset includes 1623 characters from 50 different alphabets and each character consists of 20 samples. We follow the experimental protocols of~\cite{vinyals2016matching} to  divide the alphabets to train/validation/test with 33/5/12, respectively. We perform $N$-way-$K$-shot classification, more specifically, for each task, we randomly sample $N$ characters and for each character, we sample $K$ samples for training and 15 samples for validation.  We augment the characters by performing rotation operations (multipliers of 90 degrees). We use a 4-layer convolutional neural network where each convolutional layer has 64 filters of 3$\times$3~\cite{finn2017model}. For the MiniImageNet, it has 64 training classes and 16 validation classes. Similar to Omniglot, we also perform the $N$-way-$K$-shot classification. We use a 4-layer convolutional neural network where each convolutional layer has 64 filters of 3$\times$3~\cite{finn2017model} for experiments. For the experiments, we use inner learning rates 0.4 and outer learning rates 0.1 for Omniglot related experiments and inner learning rates 0.01 and outer learning rates 0.05 for MiniImageNet-related experiments. We perform 4 inner gradient descent steps and set $K_max = 6$ for the RT-MLMC method. The experimental results for the MiniImageNet dataset is shown in Figure~\ref{fig:hyper-rep-mini}.

\section{Proof for Theorems}
In this section, we provide proof for the convergence results in the main text. First, we restate all assumptions needed in our proof below:
\begin{assumption} [Assumption 4.1]
The function $f(x ,y)$ is possibly non-convex and uniformly $L$-smooth, \emph{i.e.} for any $x_1$, $x_2 \in \mathcal{X}$, $y_1$, $y_2 \in \mathbb{R}^d$, any $\xi \in \mathcal{D}_u$.  Denote $z_1 = (x_1, y_1)$, $z_2 = (x_2, y_2)$, then we have: \[f(z_1;\xi) \leq f(z_2;\xi) + \langle \nabla f (z_2,\xi),  z_1 - z_2\rangle + \frac{L}{2}||z_1 - z_2||^2.\] or equivalently: $||\nabla f(z_1;\xi) - \nabla f(z_2;\xi)|| \leq L||z_1 - z_2||$. We also assume  for any $x \in \mathcal{X}$ and any $y \in \mathbb{R}^d$, and we denote $z = (x, y)$, then we have $||\nabla_y f(z)|| \leq C_f$.
\end{assumption}

\begin{assumption} [Assumption 4.2] Function $g(x,y)$ is uniformly $\mu$-strongly convex \emph{w.r.t} $y$ for any given $x$ and uniformly $L$-smooth, \emph{i.e.} for any $y_1$, $y_2 \in \mathbb{R}^d$ and $\zeta \in \mathcal{D}_l$, we have: \[g(x, y_1; \zeta) \geq g(x,y_2; \zeta) + \langle \nabla_y g (x, y_2; \zeta), y_2 - y_1\rangle + \frac{\mu}{2}||y_2 - y_1||^2.\] and we have: \[g(z_1; \zeta) \leq g(z_2; \zeta) + \langle \nabla g (z_2; \zeta),  z_1 - z_2\rangle + \frac{L}{2}||z_1 - z_2||^2.\] equivalently: $||\nabla g(z_1; \zeta) - \nabla g(z_2; \zeta)|| \leq L||z_1 - z_2||$. Furthermore, we have $\nabla_{xy} g(x,y)$ and $\nabla_{y^2} g(x,y)$ are Lipschitz continuous with constant $L_{xy}$ and $L_{y^2}$ respectively, \emph{i.e.} we have: $||\nabla_{xy} g(z_1; \zeta) - \nabla_{xy} g(z_2; \zeta)|| \leq L_{xy}||z_1 - z_2||$ and $||\nabla_{y^2} g(z_1; \zeta) - \nabla_{y^2} g(z_2; \zeta)|| \leq L_{y^2}||z_1 - z_2||$.
\end{assumption}

\begin{assumption} [Assumption 4.3]
For all examples $\xi_i \in D_u$ and states $(x,y) \in \mathbb{R}^{p+d}$, there exists an outer bound A such that the sample gradient errors satisfy $\|\nabla f(x,y; \xi_i) - \nabla f(x,y)\| \leq A$.
\end{assumption}

\begin{assumption} [Assumption 4.4]
For all examples $\zeta_j \in D_l$ and states $(x,y) \in \mathbb{R}^{p+d}$, there exists an outer bound A such that the sample gradient errors satisfy $\|\nabla g(x,y; \zeta_i) - \nabla g(x,y)\| \leq A$ and $\|\nabla^2 g(x,y; \zeta_i) - \nabla^2 g(x,y)\| \leq A$.
\end{assumption}

\begin{assumption}[Assumption 4.7]
For all examples $\xi_i \in D_u$ and states $x \in \mathbb{R}^{p}$, there exists an upper bound A such that the sample gradient errors satisfy $\|\nabla h(x; \xi_i) - \nabla h(x)\| \leq A$.
\end{assumption}

\begin{assumption}
Given two sequences $\{\xi_t^{\pi} \in \mathcal{D}_u, t \in [T]\}$ and $\{\zeta_t^{\pi} \in \mathcal{D}_l, t \in [T]\}$, we assume there exists some constants $\alpha$, $C$ such that for any step $t$ and any $k > 0$, we have:
$\big\|\frac{1}{k} \sum_{\tau = t}^{t+k-1} \nabla f (x_t,y_t; \xi_{\tau}^{\pi}) - \nabla f (x_t,y_t)\big\|^2 \leq k^{-\alpha}C^2 $, $\big\|\frac{1}{k} \sum_{\tau = t}^{t+k-1} \nabla g (x_t,y_t; \zeta_{\tau}^{\pi}) - \nabla g (x_t,y_t)\big\|^2 \leq k^{-\alpha}C^2$, $\big\|\frac{1}{k} \sum_{\tau = t}^{t+k-1} \nabla^2 g (x_t,y_t; \zeta_{\tau}^{\pi}) - \nabla^2 g (x_t,y_t)\big\|^2 \leq k^{-\alpha}C^2$,$\big\|\frac{1}{k} \sum_{\tau = t}^{t+k-1} \nabla h (x_t; \xi_{\tau}^{\pi}) - \nabla h (x_t)\big\|^2 \leq k^{-\alpha}C^2 (\text{Only for the conditional bilevel optimization case})$.
\end{assumption}

\begin{algorithm}[t]
\caption{Without-Replacement Compositional Optimization (WiOR-Comp)}
\label{alg:Comp}
\begin{algorithmic}[1]
\STATE {\bfseries Input:} Initial states $x_I^0$, $y_I^0$ and $u_I^0$; learning rates $\{\gamma_i^r, \rho_i^r, \eta_i^r\}, i \in [I], r \in [R]$, $I = \text{lcm}(m,n)$.
\FOR{epochs $r=1$ \textbf{to} $R$}
% \STATE Randomly sample $I/m$ permutations of $1,2,...,m$ and concatenate them to have $\xi_{0}, \xi_{1}, ..., \xi_{I}$, sample $I/n$ permutations of $1,2,...,n$ and concatenate them to have $\zeta_{0}, \zeta_{1}, ..., \zeta_{I}$;
\STATE Generate sample order sequence $\{\xi_i^{\pi}\}$ and $\{\zeta_i^{\pi}\}$ for $i \in [I]$ as Line 3 of Algorithm~\ref{alg:BiO};
\STATE Set $y_0^r = y_{I}^{r-1}$, $u_0^r = \mathcal{P}_{\iota}(u_{I}^{r-1})$ and $x_0^r = x_{I}^{r-1}$
\FOR{$i=0$ \textbf{to} $I-1$}
\STATE $y_{i+1}^r = (1 - \gamma_i^r)y_{i}^r + \gamma_i^r r(x_{i}^r, \zeta_i^{\pi})$; $u_{i+1}^r = (1 - \rho_i^r )u_i^r + \rho_i^r\nabla_y f( y_i^r; \xi_{i}^{\pi})$;
\STATE $x_{i+1}^r = x_{i}^r - \eta_i^r \nabla r(x_{i}^r; \zeta_i^{\pi}) u_{i}^r$; 
\ENDFOR
\ENDFOR
\end{algorithmic}
\end{algorithm}

\begin{algorithm}[t]
\caption{Without-Replacement MiniMax Opt. (WiOR-MiniMax)}
\label{alg:MiniMax}
\begin{algorithmic}[1]
\STATE {\bfseries Input:} Initial states $x_m^0$, $y_m^0$; learning rates $\{\gamma_i^r, \eta_i^r\}, i \in [m], r \in [R]$.
\FOR{epochs $r=1$ \textbf{to} $R$}
\STATE sample a permutation of outer dataset to have $\{\xi_{i}^{\pi}, i\in[m]\}$;
\STATE Set $y_0^r = y_m^{r-1}$ and $x_0^r = x_m^{r-1}$
\FOR{$i=0$ \textbf{to} $m-1$}
\STATE $y_{i+1}^r = y_{i}^r + \gamma_i^r \nabla_y f (x_{i}^r, y_{i}^r; \xi_i^{\pi})$;
\STATE $x_{i+1}^r = x_{i}^r - \eta_i^r \nabla_x f (x_i^r, y_i^r; \xi_i^{\pi})$; 
\ENDFOR
\ENDFOR
\end{algorithmic}
\end{algorithm}

\begin{algorithm}[t]
\caption{Without-Replacement Conditional Compositional Optimization (WiOR-CComp)}
\label{alg:Comp-Cond}
\begin{algorithmic}[1]
\STATE {\bfseries Input:} Initial state $x^0$, learning rates $\{\eta_i^r\}, i \in [m], r \in [R]$.
\FOR{epochs $r=1$ \textbf{to} $R$}
\STATE Randomly sample a permutation of outer dataset to have $\{\xi_{i}^{\pi}, i\in[m]\}$ and set $x_0^r = x_m^{r-1}$ or $x_0$ if  $r=1$.
\FOR{$i=0$ \textbf{to} $m-1$}
\STATE Initial states $y^0$ and $u^0$, learning rates  $\{\gamma_j^s, \rho_j^s\}, j \in [n_i], s \in [S]$.
\FOR{$s=1$ \textbf{to} $S$}
\STATE Sample a permutation of inner dataset to have $\{\zeta_{\xi_i, j}^{\pi}, i\in[n_{i}]\}$, set $y_0^s = y_{n_i}^{s-1}$ and $u_0^s = \mathcal{P}_{\epsilon}(u_{n_i}^{s-1})$ or $y_0^s = y_0$ and $u_0^s = u_0$ if $s=1$.
\FOR{$j=0$ \textbf{to} $n_i-1$}
\STATE $y_{j+1}^s = (1 - \gamma_j^s)y_{j}^s + \gamma_j^s r (x_{i}^r; \zeta_{\xi_i, j}^{\pi})$;
\STATE $u_{j+1}^s = (1 - \rho_j^s)u_j^s + \rho_j^s \nabla f(y_j^s; \xi_i^{\pi})$;
\ENDFOR
\ENDFOR
\STATE $x_{i+1}^r = x_{i}^r - \eta_i^r \nabla r^{\xi_{i}^{\pi}}(x_{i}^r) u_{n_i}^S$;
\ENDFOR
\ENDFOR
\end{algorithmic}
\end{algorithm}

% \begin{algorithm}[t]
% \caption{General Without-Replacement Bilevel Opt. (WiOR-BO)}
% \label{alg:BiO_app}
% \begin{algorithmic}[1]
% \STATE {\bfseries Input:} Initial states $x^0$, $y^0$ and $u^0$; learning rates $\{\gamma_t, \rho_t, \eta_t\}$, sample order $\{\xi_t^{\pi} \in \mathcal{D}_u\}$ and $\{\zeta_t^{\pi} \in \mathcal{D}_l\}$ for $t \in [T]$,
% \FOR{$t=1$ \textbf{to} $T$}

% \STATE Set $y_0^r = y_{I}^{r-1}$, $u_0^r = \mathcal{P}_{\epsilon}(u_{I}^{r-1})$ and $x_0^r = x_{I}^{r-1}$
% \FOR{$i=0$ \textbf{to} $I-1$}
% \STATE $y_{i+1}^r = y_{i}^r - \gamma_i^r  \nabla_y g (x_{i}^r, y_{i}^r; \zeta_i)$;
% \STATE $u_{i+1}^r = u_i^r - \rho_i^r (\nabla_{y^2} g(x_i^r, y_i^r; \zeta_i)u_i^r - \nabla_y f (x_i^r, y_i^r; \xi_i))$;
% \STATE $x_{i+1}^r = x_{i}^r - \eta_i^r (\nabla_x f(x_i^r, y_i^r; \xi_i) - \nabla_{xy}g(x_i^r, y_i^r)u_{i}^r; \zeta_i)$; 
% \ENDFOR
% \ENDFOR
% \end{algorithmic}
% \end{algorithm}

\subsection{Proof for Bilevel Optimization Problems}\label{sec:B}
Suppose we define:
\begin{align*}
    \Phi(x,y) = &\nabla_x f(x, y) -  \nabla_{xy} g(x, y)\times [\nabla_{y^2} g(x, y)]^{-1} \nabla_y f(x, y)
\end{align*}
Then we have $\nabla h(x) =  \Phi(x,y_x)$. Furthermore, we have the following proposition:
\begin{proposition}
\label{some smoothness}
Suppose Assumptions~\ref{assumption:f_smoothness} and ~\ref{assumption:g_smoothness} hold, the following statements hold:
\begin{itemize}
\item [a)] $y_x$ is Lipschitz continuous in $x$ with constant $\rho = \kappa$, where $\kappa = \frac{L}{\mu}$ is the condition number of $g(x,y)$.

\item [b)] $\|\Phi(x_1; y_1) - \Phi(x_2; y_2)\|^2 \leq \hat{L}^2 (\|x_1- x_2\|^2 + \|y_1- y_2\|^2)$, where $\hat{L} = O(\kappa^2)$.

\item [c)] $h(x)$ is smooth in $x$ with constant $\bar{L}$ i.e., for any given $x_1, x_2 \in X$, we have
$\|\nabla h(x_2) - \nabla h(x_1)\| \le \bar{L} \|x_2 - x_1\|$
where $\bar{L} =O(\kappa^3)$.
\end{itemize}
\end{proposition}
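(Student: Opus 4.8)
The plan is to prove the three claims in order, using (a) as an input to (c) and treating (b) as the main technical workhorse. Throughout I would rely only on Assumptions~\ref{assumption:f_smoothness} and~\ref{assumption:g_smoothness}, namely $L$-smoothness of $f$ and $g$, $\mu$-strong convexity of $g$ in $y$, boundedness $\|\nabla_y f\|\le C_f$, and Lipschitz continuity of $\nabla_{xy}g$ and $\nabla_{y^2}g$ with constants $L_{xy}$ and $L_{y^2}$.

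For part (a), I would avoid the implicit function theorem and argue directly from strong convexity. Fixing $x_1,x_2$, the optimality conditions give $\nabla_y g(x_1,y_{x_1})=\nabla_y g(x_2,y_{x_2})=0$. Strong convexity of $g(x_1,\cdot)$ yields $\mu\|y_{x_1}-y_{x_2}\|^2\le \langle \nabla_y g(x_1,y_{x_1})-\nabla_y g(x_1,y_{x_2}),\,y_{x_1}-y_{x_2}\rangle$. Using $\nabla_y g(x_1,y_{x_1})=0$ and then inserting $\nabla_y g(x_2,y_{x_2})=0$, the right-hand side equals $\langle \nabla_y g(x_2,y_{x_2})-\nabla_y g(x_1,y_{x_2}),\,y_{x_1}-y_{x_2}\rangle$, which by $L$-smoothness and Cauchy--Schwarz is at most $L\|x_1-x_2\|\,\|y_{x_1}-y_{x_2}\|$. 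Dividing out one factor of $\|y_{x_1}-y_{x_2}\|$ gives $\|y_{x_1}-y_{x_2}\|\le (L/\mu)\|x_1-x_2\|=\kappa\|x_1-x_2\|$.

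For part (b), I would write $\Phi(x,y)=\nabla_x f(x,y)-\nabla_{xy}g(x,y)\,[\nabla_{y^2}g(x,y)]^{-1}\nabla_y f(x,y)$ and bound $\|\Phi(z_1)-\Phi(z_2)\|$ term by term with $z_i=(x_i,y_i)$. The first term is $L$-Lipschitz directly. For the triple product I would use the telescoping identity: for a product of factors that are each bounded and Lipschitz, the product is Lipschitz with constant equal to the sum, over each factor, of its Lipschitz constant times the product of the uniform bounds of the remaining factors. The required ingredients are $\|\nabla_{xy}g\|\le L$ with Lipschitz constant $L_{xy}$; $\|[\nabla_{y^2}g]^{-1}\|\le 1/\mu$ from strong convexity, with Lipschitz constant controlled via the resolvent identity $A^{-1}-B^{-1}=A^{-1}(B-A)B^{-1}$ by $L_{y^2}/\mu^2$; and $\|\nabla_y f\|\le C_f$ with Lipschitz constant $L$. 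Collecting terms, the dominant contribution comes from the inverse-Hessian factor and scales like $L\cdot(L_{y^2}/\mu^2)\cdot C_f=O(\kappa^2)$, so $\Phi$ is Lipschitz in $(x,y)$ with constant $\hat L=O(\kappa^2)$; squaring and writing $\|z_1-z_2\|^2=\|x_1-x_2\|^2+\|y_1-y_2\|^2$ gives the stated inequality.

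For part (c), since $\nabla h(x)=\Phi(x,y_x)$, I would combine (a) and (b): $\|\nabla h(x_2)-\nabla h(x_1)\|=\|\Phi(x_2,y_{x_2})-\Phi(x_1,y_{x_1})\|\le \hat L\sqrt{\|x_2-x_1\|^2+\|y_{x_2}-y_{x_1}\|^2}\le \hat L\sqrt{1+\kappa^2}\,\|x_2-x_1\|$, so $\bar L=\hat L\sqrt{1+\kappa^2}=O(\kappa^2)\cdot O(\kappa)=O(\kappa^3)$. The main obstacle is part (b): carefully bounding the Lipschitz constant of the inverse Hessian through the resolvent identity and then propagating the bounds and Lipschitz constants cleanly through the triple product so that the final dependence is exactly $O(\kappa^2)$; the rest is bookkeeping.
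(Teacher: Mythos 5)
Your proof is correct and complete. Note that the paper itself gives no proof of this proposition---it states only that this is a standard result in bilevel optimization and omits the argument---and your derivation is precisely that standard one (cf.\ Lemma~2.2 of~\cite{ghadimi2018approximation}): strong monotonicity of $\nabla_y g(x,\cdot)$ combined with joint $L$-smoothness for part (a), the product rule for bounded Lipschitz factors together with the resolvent identity $A^{-1}-B^{-1}=A^{-1}(B-A)B^{-1}$ to handle $[\nabla_{y^2}g]^{-1}$ for part (b), and the composition $\nabla h(x)=\Phi(x,y_x)$ with parts (a) and (b) for part (c), yielding the stated constants $\kappa$, $\hat{L}=O(\kappa^2)$, and $\bar{L}=O(\kappa^3)$.
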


This is a standard results in bilevel optimization and we omit the proof here. 

The main technique we use to analysis the convergence of Algorithm~\ref{alg:BiO} is \textit{aggregated analysis}, in other words, we perform analysis over a block of length $k$ instead of the more common per step analysis in stochastic bilevel optimization. Note that this technique is essential in analyzing without-replacement sampling for single-level optimization problems, and we extend them to the bilevel setting. More specifically, suppose the total number of training steps are $T \coloneqq R\times I$, where $R$ and $I$ are denoted in Algorithm~\ref{alg:BiO}, then we consider a block of training steps $[t, t+k]$ for $t \in [T-k]$. Furthermore, we denote: $\bar{\eta}_t = \sum_{\tau=t}^{t+k-1} \eta_\tau, \text{ and } \bar{\nu}_t = \frac{1}{\bar{\eta}_t} \sum_{\tau=t}^{t+k-1} \eta_\tau \nu_{\tau}$; $\bar{\gamma}_t = \sum_{\tau=t}^{t+k-1}\gamma_\tau \text{ and } \bar{w}_t = \frac{1}{\bar{\gamma}_t}\sum_{\tau =t}^{t+k-1}\gamma_\tau w_\tau$; $\bar{\rho}_{t} = \sum_{\tau =t}^{t+k-1} \rho_{\tau}, \text{ and } \bar{q}_{t} = \frac{1}{\bar{\rho}_{t}} \sum_{\tau =t}^{t+k-1} \rho_{\tau} q_{\tau}$, where $\nu_t = \nabla_x f(x_t, y_t; \xi_t) - \nabla_{xy}g(x_t, y_t; ; \zeta_t)u_t$, $w_t = \nabla_y g (x_t, y_t; \zeta_t)$ and $q_t = \nabla_{y^2} g(x_t, y_t; \zeta_t)u_t - \nabla_y f (x_t, y_t; \xi_t)$. In particular, we denote $r_x(u)$ as:
\[r_{x}(u) = \frac{1}{2} u^T\nabla_{y^2} g(x, y_{x})u - \nabla_y f(x, y_{x})^Tu,\] by assumption it is $L$ smooth and $\mu$ strongly convex. It is straightforward to see that $u_x = \underset{u \in \mathbb{R}^d}{\arg\min}\; r_x(u)$, where $u_x$ is defined in Eq.~\eqref{eq:hg}. Finally, in the proof, we consider the general case of different orders for all involved example derivatives. In other words, we have $\{\xi_{t,1}\}$, $\{\xi_{t,2}\}$, $\{\zeta_{t,1}\}$, $\{\zeta_{t,2}\}$ and $\{\zeta_{t,3}\}$ for $\nabla_y f(x,y)$, $\nabla_x f(x,y)$,$\nabla_{y} g(x,y)$, $\nabla_{y^2} g(x,y)$, $\nabla_{xy} g(x,y)$ respectively. The two orders used by Algorithm~\ref{alg:BiO} is a special case of the proof and is practically appealing as they can reuse computation as we discussed in the introduction.

% We apply Proposition~\ref{prop:strong-prog} over function $r_{x_t}(u)$ and update rule $u_{t+k} = u_t + \bar{\rho}_{t}\bar{q}_{t}$.

\begin{lemma}
For $\tau > t \geq 0$, the bias of $\|\sum_{\bar{\tau}=t}^{\tau - 1} \eta_{\bar{\tau}}\big(\nu_{\bar{\tau}} - \nabla h(x_t)\big)\|^2$, $\big\| \sum_{\bar{\tau}=t}^{\tau-1} \rho_{\bar{\tau}}\big(\nabla r_{x_t}(u_t)  -  q_{\bar{\tau}} \big)\big\|^2$ and $\|\sum_{\bar{\tau}=t}^{\tau - 1} \gamma_{\bar{\tau}}\big(\nabla_y g (x_{\bar{\tau}}, y_{\bar{\tau}}, \zeta_{\bar{\tau}, 1}) - \nabla_y g (x_{t}, y_{t}) \big)\|^2$ are bounded by Eq.~\eqref{eq:nu-diff-final}, Eq.~\eqref{eq:q-diff-final} and Eq.~\eqref{eq:w-diff-final}.
\end{lemma}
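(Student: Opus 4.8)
The plan is to bound all three accumulated-bias quantities with a single three-step template: split each summand into an \emph{iterate-drift} part, an \emph{average-sample-error} part governed by Assumption~\ref{assumption:ave-grad-err}, and (for the $\nu$ and $q$ terms only) an \emph{optimality-gap} part measuring the distance between the current inner state $(y_t,u_t)$ and the exact pair $(y_{x_t},u_{x_t})$. Take the $x$-update bias first. I would insert the frozen-iterate sample vector $\tilde\nu_{\bar\tau}=\nabla_x f(x_t,y_t;\xi_{\bar\tau,2})-\nabla_{xy}g(x_t,y_t;\zeta_{\bar\tau,3})u_t$ and the frozen-iterate full-batch vector $\nabla_x f(x_t,y_t)-\nabla_{xy}g(x_t,y_t)u_t$, and apply the triangle inequality. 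The gap $\nu_{\bar\tau}-\tilde\nu_{\bar\tau}$ is controlled by the $L$- and $L_{xy}$-Lipschitz bounds of Assumptions~\ref{assumption:f_smoothness}--\ref{assumption:g_smoothness} and the boundedness of the dual iterate $u$ ensured by the projection $\mathcal{P}_{\iota}$, giving a bound of order $\|x_{\bar\tau}-x_t\|^2+\|y_{\bar\tau}-y_t\|^2+\|u_{\bar\tau}-u_t\|^2$. The gap $\tilde\nu_{\bar\tau}-(\text{full batch})$ is exactly an average sample error at the frozen point, so summing over $\bar\tau$ and applying Assumption~\ref{assumption:ave-grad-err} separately to $\nabla_x f$ and to $\nabla^2 g$ (with a factor of order $\iota^2$ absorbing $u_t$) yields the $k^{-\alpha}C^2$ contribution. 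The last gap, between the frozen full-batch vector and $\nabla h(x_t)=\Phi(x_t,y_{x_t})$, I would close by writing $\nabla h(x_t)=\nabla_x f(x_t,y_{x_t})-\nabla_{xy}g(x_t,y_{x_t})u_{x_t}$, splitting $\nabla_{xy}g(x_t,y_t)u_t-\nabla_{xy}g(x_t,y_{x_t})u_{x_t}$ into a Hessian-difference piece and a $u$-difference piece, and again invoking the Lipschitz bounds; this produces the $\|y_t-y_{x_t}\|^2+\|u_t-u_{x_t}\|^2$ terms that later feed the potential coefficients $\phi_y,\phi_u$.

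For the $u$-update bias I would repeat the template with $q_{\bar\tau}=\nabla_{y^2}g(x_{\bar\tau},y_{\bar\tau};\zeta_{\bar\tau,2})u_{\bar\tau}-\nabla_y f(x_{\bar\tau},y_{\bar\tau};\xi_{\bar\tau,1})$ and target $\nabla r_{x_t}(u_t)=\nabla_{y^2}g(x_t,y_{x_t})u_t-\nabla_y f(x_t,y_{x_t})$, now using the $L_{y^2}$-Lipschitz constant, the $C_f$-bound on $\nabla_y f$, and the boundedness of $u$; the only structural change is that the free variable $u_t$ appears identically on both sides, so the optimality-gap part involves $\|y_t-y_{x_t}\|^2$ alone. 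The $y$-update bias is the easiest case: its target $\nabla_y g(x_t,y_t)$ is the full gradient at the \emph{current} iterate, so there is no optimality-gap part at all, and the decomposition reduces to the drift piece $\nabla_y g(x_{\bar\tau},y_{\bar\tau};\zeta_{\bar\tau,1})-\nabla_y g(x_t,y_t;\zeta_{\bar\tau,1})$, bounded by $L(\|x_{\bar\tau}-x_t\|+\|y_{\bar\tau}-y_t\|)$, together with the average-sample-error piece for $\nabla g$ handled by Assumption~\ref{assumption:ave-grad-err}.

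The main obstacle is closing the drift terms, because $\|x_{\bar\tau}-x_t\|$, $\|y_{\bar\tau}-y_t\|$ and $\|u_{\bar\tau}-u_t\|$ are themselves telescoped sums of the very update directions $\nu_s,q_s,w_s$ whose deviations we are bounding, which makes the three estimates mutually recursive. I would resolve this by writing each one-step increment as $\eta_s\nu_s$, $\rho_s q_s$, $\gamma_s w_s$, bounding $\|\nu_s\|\le\|\nabla h(x_t)\|+\|\nu_s-\nabla h(x_t)\|$ and analogously for $q_s$ and $w_s$, and exploiting the constant step sizes with ratios $\gamma=c_1\eta$, $\rho=c_2\eta$ so that the entire block travels at most $O(k\eta)$. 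Since Theorem~\ref{theorem:BO} imposes $\eta\le 1/(k\tilde L_0)$, the coefficient multiplying the self-referential drift is strictly below one, so the coupled recursion can be unrolled and absorbed into the remaining terms. Because the proof keeps the five derivative orders $\{\xi_{t,1}\},\{\xi_{t,2}\},\{\zeta_{t,1}\},\{\zeta_{t,2}\},\{\zeta_{t,3}\}$ separate, Assumption~\ref{assumption:ave-grad-err} is invoked once per derivative type, which is only bookkeeping and leaves the structure untouched. Collecting the drift, average-sample-error, and optimality-gap contributions in each of the three cases then gives exactly the stated bounds Eq.~\eqref{eq:nu-diff-final}, Eq.~\eqref{eq:q-diff-final} and Eq.~\eqref{eq:w-diff-final}.
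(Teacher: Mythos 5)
Your proposal is correct and follows essentially the same route as the paper's own proof: the identical triangle-inequality decomposition of each accumulated bias into iterate-drift terms, frozen-point sample-error sums, and optimality-gap terms $\|y_t-y_{x_t}\|^2$, $\|u_t-u_{x_t}\|^2$ (controlled via the Lipschitz constants $L$, $L_{xy}$, $L_{y^2}$ and the $C_f/\mu$-type bound on the dual variable), including the two structural observations that the $q$-bias carries only the $y$-gap and the $w$-bias carries no gap at all. The one caveat is scope: the target bounds \eqref{eq:nu-diff-final}, \eqref{eq:q-diff-final}, \eqref{eq:w-diff-final} deliberately retain the drift sums and the raw sample-error sums on their right-hand sides, so the recursion-unrolling and the invocation of Assumption~\ref{assumption:ave-grad-err} in your final paragraph belong to the later steps (Lemma~\ref{lemma:var-drift} and Proposition~\ref{prop:sample-bias}), not to this lemma; since your argument passes through the stated bounds before that extra processing, nothing is missing.
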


\begin{proof}
First for $\|\sum_{\bar{\tau}=t}^{\tau - 1} \eta_{\bar{\tau}}(\nu_{\bar{\tau}} - \nabla h(x_t))\|^2$,  we have:
\begin{align}
\label{eq:nu-diff}
      &\big\| \sum_{{\bar{\tau}}=t}^{{\tau}-1} \eta_{{\bar{\tau}}}\big(\nabla h(x_t)  -  \nu_{\bar{\tau}} \big)\big\|^2\nonumber\\
      &= \big\|  \sum_{{\bar{\tau}}=t}^{{\tau}-1} \eta_{{\bar{\tau}}}\big( \nabla_x f(x_t, y_{x_t}) -  \nabla_{xy} g(x_t, y_{x_t})u_{x_t} -  (\nabla_x f(x_{\bar{\tau}}, y_{\bar{\tau}}; \xi_{\bar{\tau},2}) - \nabla_{xy}g(x_{\bar{\tau}}, y_{\bar{\tau}};\zeta_{\bar{\tau},3})u_{{\bar{\tau}}}) \big)\big\|^2\nonumber\\
      &\leq 2\|  \sum_{{\bar{\tau}}=t}^{{\tau}-1} \eta_{{\bar{\tau}}}\big( \nabla_x f(x_t, y_{x_t})  -  \nabla_x f(x_{\bar{\tau}}, y_{\bar{\tau}}; \xi_{{\bar{\tau}},2})\big) \|^2 +  2\| \sum_{{\bar{\tau}}=t}^{{\tau}-1} \eta_{{\bar{\tau}}}\big(\nabla_{xy} g(x_t, y_{x_t})u_{x_{t}} -  \nabla_{xy}g(x_{\bar{\tau}}, y_{\bar{\tau}}; \zeta_{{\bar{\tau}},3})u_{{\bar{\tau}}} \big)\|^2 \nonumber\\
      &\leq 2\|  \sum_{{\bar{\tau}}=t}^{{\tau}-1} \eta_{{\bar{\tau}}}\big( \nabla_x f(x_t, y_{x_t})  -  \nabla_x f(x_{\bar{\tau}}, y_{\bar{\tau}}; \xi_{{\bar{\tau}},2})\big) \|^2 \nonumber\\
      &\qquad + 4\| \sum_{{\bar{\tau}}=t}^{{\tau}-1}\eta_{{\bar{\tau}}} \nabla_{xy}g(x_{\bar{\tau}}, y_{\bar{\tau}}; \zeta_{{\bar{\tau}},3})\big(u_{x_{t}} - u_{{\bar{\tau}}}\big)\|^2 +  \frac{4C_f^2}{\mu^2} \| \sum_{{\bar{\tau}}=t}^{{\tau}-1} \eta_{{\bar{\tau}}}\big(\nabla_{xy} g(x_t, y_{x_t}) -  \nabla_{xy}g(x_{{\bar{\tau}}}, y_{{\bar{\tau}}}; \zeta_{{\bar{\tau}},3})\big)\|^2 \nonumber\\
      &\leq 2\|  \sum_{{\bar{\tau}}=t}^{{\tau}-1} \eta_{{\bar{\tau}}}\big( \nabla_x f(x_t, y_{x_t})  -  \nabla_x f(x_{\bar{\tau}}, y_{\bar{\tau}}; \xi_{{\bar{\tau}},2})\big) \|^2  + 4L^2{\bar{\eta}_t^{\tau}} \sum_{{\bar{\tau}}=t}^{{\tau}-1}\eta_{{\bar{\tau}}}\| (u_{x_{t}} - u_{{\bar{\tau}}})\|^2\nonumber\\ 
      &\qquad +  \frac{4C_f^2}{\mu^2} \| \sum_{{\bar{\tau}}=t}^{{\tau}-1} \eta_{{\bar{\tau}}}\big(\nabla_{xy} g(x_t, y_{x_t}) -  \nabla_{xy}g(x_{{\bar{\tau}}}, y_{{\bar{\tau}}}; \zeta_{{\bar{\tau}},3})\big)\|^2    
\end{align}
For the first and the third terms above, we have:
\begin{align}
\label{eq:nu-diff1}
    &\|\sum_{{\bar{\tau}}=t}^{{\tau}-1} \eta_{{\bar{\tau}}}\big( \nabla_x f(x_t, y_{x_t})  -  \nabla_x f(x_{\bar{\tau}}, y_{\bar{\tau}}; \xi_{{\bar{\tau}},2})\big) \|^2\nonumber\\
    &\leq 3L^2(\bar{\eta}_{t})^2 \| y_{x_t}  -  y_t \|^2 +  3L^2\bar{\eta}_{t}\sum_{{\bar{\tau}}=t}^{{\tau}-1} \eta_{{\bar{\tau}}} \| z_t  -  z_{\bar{\tau}}\|^2 + 3\|  \sum_{{\bar{\tau}}=t}^{{\tau}-1} \eta_{{\bar{\tau}}}\big( \nabla_x f(x_t, y_t)  -  \nabla_x f(x_t, y_t; \xi_{{\bar{\tau}},2})\big) \|^2
\end{align}
and
\begin{align}
\label{eq:nu-diff2}
    &\| \sum_{{\bar{\tau}}=t}^{{\tau}-1} \eta_{{\bar{\tau}}}\big(\nabla_{xy} g(x_t, y_{x_t}) -  \nabla_{xy}g(x_{{\bar{\tau}}}, y_{{\bar{\tau}}}; \zeta_{{\bar{\tau}},3})\big)\|^2 \nonumber\\ 
    &\leq 3L_{xy}^2({\bar{\eta}_t^{\tau}})^2\| y_{x_t}  -  y_t \|^2 +  3L_{xy}^2{\bar{\eta}_t^{\tau}} \sum_{{\bar{\tau}}=t}^{{\tau}-1} \eta_{{\bar{\tau}}} \| z_t  -  z_{\bar{\tau}}\|^2  + 3\|  \sum_{{\bar{\tau}}=t}^{{\tau}-1} \eta_{{\bar{\tau}}}\big( \nabla_{xy} g(x_t, y_t)  -  \nabla_{xy} g(x_t, y_t; \zeta_{{\bar{\tau}},3})\big) \|^2
\end{align}
Combine Eq~\eqref{eq:nu-diff1} and~\eqref{eq:nu-diff2} with Eq.~\eqref{eq:nu-diff} and denote $\tilde{L}_1^2 = \big(L^2 + \frac{2L_{xy}^2C_f^2}{\mu^2}\big)$ to have:
\begin{align}
\label{eq:nu-diff-final}
 &\|\sum_{\bar{\tau}=t}^{\tau - 1} \eta_{\bar{\tau}}\big(\nu_{\bar{\tau}} - \nabla h(x_t)\big)\|^2  \nonumber\\
 &\leq 6\tilde{L}_1^2(\bar{\eta}_t^{\tau})^2\| y_{x_t}  -  y_t \|^2 + 6\tilde{L}_1^2\bar{\eta}_t^{\tau}\sum_{\bar{\tau}=t}^{\tau-1} \eta_{\bar{\tau}} \| z_t  -  z_{\bar{\tau}}\|^2 + 8L^2(\bar{\eta}_t^{\tau})^2\|u_{x_{t}} - u_{t}\|^2 + 8L^2\bar{\eta}_t^{\tau} \sum_{\bar{\tau}=t}^{\tau-1}\eta_{\bar{\tau}}\|u_{t} - u_{\bar{\tau}}\|^2 \nonumber\\
 &\qquad + 6\|  \sum_{\bar{\tau}=t}^{\tau-1} \eta_{\bar{\tau}}\big( \nabla_x f(x_t, y_t)  -  \nabla_x f(x_t, y_t; \xi_{\bar{\tau},2})\big) \|^2 + \frac{12 C_f^2}{\mu^2}\|  \sum_{\bar{\tau}=t}^{\tau-1} \eta_{\bar{\tau}}\big( \nabla_{xy} g(x_t, y_t)  -  \nabla_{xy} g(x_t, y_t; \zeta_{\bar{\tau},3})\big) \|^2
\end{align}
Next, for the term $\| \sum_{\bar{\tau}=t}^{\tau-1} \rho_{\bar{\tau}}(\nabla r_{x_t}(u_t)  -  q_{\bar{\tau}})\|^2$, we have:
\begin{align*}
      &\big\| \sum_{\bar{\tau}=t}^{\tau-1} \rho_{\bar{\tau}}\big(\nabla r_{x_t}(u_t)  -  q_{\bar{\tau}} \big)\big\|^2\nonumber\\
      &\leq \big\|\sum_{\tau=t}^{\tau-1} \rho_{\bar{\tau}}\big( \nabla_{y^2} g(x_{\bar{\tau}}, y_{\bar{\tau}}, \zeta_{\bar{\tau}, 2})u_{\bar{\tau}} - \nabla_y f(x_{\bar{\tau}}, y_{\bar{\tau}}, \xi_{\bar{\tau}, 1}) -  \big(\nabla_{y^2} g(x_t, y_{x_t})u_t - \nabla_y f(x_t, y_{x_t}) \big)\big\|^2\nonumber\\
      &\leq 2\|  \sum_{\bar{\tau}=t}^{\tau-1} \rho_{\bar{\tau}}\big( \nabla_y f(x_t, y_{x_t})  -  \nabla_y f(x_{\bar{\tau}}, y_{\bar{\tau}}; \xi_{{\bar{\tau}},1})\big) \|^2  \nonumber\\ 
      &\qquad +  2\| \sum_{\bar{\tau}=t}^{\tau-1} \rho_{\bar{\tau}}\big(\nabla_{y^2} g(x_t, y_{x_t})u_{t} -  \nabla_{y^2}g(x_{\bar{\tau}}, y_{\bar{\tau}}; \zeta_{\bar{\tau},2})u_{\bar{\tau}} \big)\|^2 \nonumber\\
      &\leq 2\|  \sum_{\bar{\tau}=t}^{\tau-1} \rho_{\bar{\tau}}\big( \nabla_y f(x_t, y_{x_t})  -  \nabla_y f(x_{\bar{\tau}}, y_{\bar{\tau}}; \xi_{{\bar{\tau}},1})\big) \|^2 \nonumber\\
      &\qquad + 4\| \sum_{\bar{\tau}=t}^{\tau-1}\rho_{\bar{\tau}} \nabla_{y^2}g(x_{\bar{\tau}}, y_{\bar{\tau}}; \zeta_{\bar{\tau},2})\big(u_{t} - u_{\bar{\tau}}\big)\|^2  +  \frac{4C_f^2}{\mu^2} \| \sum_{\bar{\tau}=t}^{\tau-1} \rho_{\bar{\tau}}\big(\nabla_{y^2} g(x_t, y_{x_t}) -  \nabla_{y^2}g(x_{\bar{\tau}}, y_{\bar{\tau}}; \zeta_{\tau,2})\big)\|^2 \nonumber\\
      &\leq 2\|  \sum_{\bar{\tau}=t}^{\tau-1} \rho_{\bar{\tau}}\big( \nabla_y f(x_t, y_{x_t})  -  \nabla_y f(x_{\bar{\tau}}, y_{\bar{\tau}}; \xi_{{\bar{\tau}},1})\big) \|^2 + 4L^2\bar{\rho}_{t}^{\tau} \sum_{\bar{\tau}=t}^{\tau-1}\rho_{\bar{\tau}}\| (u_{t} - u_{\bar{\tau}})\|^2  \nonumber\\ 
      &\qquad +   \frac{4C_f^2}{\mu^2} \| \sum_{\bar{\tau}=t}^{\tau-1} \rho_{\bar{\tau}}\big(\nabla_{y^2} g(x_t, y_{x_t}) -  \nabla_{y^2}g(x_{\bar{\tau}}, y_{\bar{\tau}}; \zeta_{\tau,2})\big)\|^2     
\end{align*}
The first and the third terms can be bounded similarly as in Eq.~\eqref{eq:nu-diff1} and Eq.~\eqref{eq:nu-diff2}, and denote $\tilde{L}_2^2 = \big(L^2 + \frac{2L_{y^2}^2C_f^2}{\mu^2}\big)$, then we have:
\begin{align}
\label{eq:q-diff-final}
    &\big\| \sum_{\bar{\tau}=t}^{\tau-1} \rho_{\bar{\tau}}\big(\nabla r_{x_t}(u_t)  -  q_{\bar{\tau}} \big)\big\|^2 \nonumber\\
    &\leq 6\tilde{L}_2^2(\bar{\rho}_t^{\tau})^2\| y_{x_t}  -  y_t \|^2 + 6\tilde{L}_2^2\bar{\rho}_t^{\tau}\sum_{\bar{\tau}=t}^{\tau-1} \rho_{\bar{\tau}} \| z_t  -  z_{\bar{\tau}}\|^2 + 4L^2\bar{\rho}_t^{\tau} \sum_{\bar{\tau}=t}^{\tau-1}\rho_{\bar{\tau}}\|u_{t} - u_{\bar{\tau}}\|^2 \nonumber\\
    &\qquad + 6\|  \sum_{\bar{\tau}=t}^{\tau-1} \rho_{\bar{\tau}}\big( \nabla_y f(x_t, y_t)  -  \nabla_y f(x_t, y_t; \xi_{\bar{\tau},1})\big) \|^2 + \frac{12 C_f^2}{\mu^2}\|  \sum_{\bar{\tau}=t}^{\tau-1} \rho_{\bar{\tau}}\big( \nabla_{y^2} g(x_t, y_t)  -  \nabla_{y^2} g(x_t, y_t; \zeta_{\bar{\tau},2})\big) \|^2
\end{align} 
Finally, for $\|\sum_{\bar{\tau}=t}^{\tau - 1} \gamma_{\bar{\tau}}(\nabla_y g (x_{\bar{\tau}}, y_{\bar{\tau}}, \zeta_{\bar{\tau}, 1}) - \nabla_y g (x_{t}, y_{t}))\|^2$, we have:
\begin{align}
\label{eq:w-diff-final}
     &\|\sum_{\bar{\tau}=t}^{\tau - 1} \gamma_{\bar{\tau}}\big(\nabla_y g (x_{\bar{\tau}}, y_{\bar{\tau}}, \zeta_{\bar{\tau}, 1}) - \nabla_y g (x_{t}, y_{t}) \big)\|^2 \nonumber\\
     &\leq 2\|\sum_{\bar{\tau}=t}^{\tau - 1} \gamma_{\bar{\tau}}\big(\nabla_y g (x_{\bar{\tau}}, y_{\bar{\tau}}, \zeta_{\bar{\tau}, 1}) - \nabla_y g (x_{t}, y_{t}, \zeta_{\bar{\tau}, 1})) \big)\|^2 \nonumber\\
    &\qquad + 2\|\sum_{\bar{\tau}=t}^{\tau - 1} \gamma_{\bar{\tau}}\big(\nabla_y g (x_{t}, y_{t}, \zeta_{\bar{\tau}, 1}) - \nabla_y g (x_{t}, y_{t}) \big)\|^2 \nonumber\\
    &\leq 2L^2\bar{\gamma}_{t}^{\tau}\sum_{\bar{\tau}=t}^{\tau - 1} \gamma_{\bar{\tau}}\| z_{\bar{\tau}} - z_{t} \|^2 + 2\|\sum_{\bar{\tau}=t}^{\tau - 1} \gamma_{\bar{\tau}}\big(\nabla_y g (x_{t}, y_{t})  - \nabla_y g (x_{t}, y_{t}, \zeta_{\bar{\tau}, 1})\big)\|^2
\end{align}
This completes the proof.
\end{proof}

\begin{lemma}
\label{lemma:desent}
For all $t \geq 0$ and any constant $m > 0$, suppose $\bar{\eta}_t < \frac{1}{2\bar{L}}$, the iterates generated satisfy:
\begin{align*}
     h(x_{t+k}) & \leq h(x_{t}) - \frac{\bar{\eta}_t}{2} \|\nabla h(x_{t})\|^2  - \frac{\bar{\eta}_t}{4} \| \bar{\nu}_t \|^2 + 3\tilde{L}_1^2 \bar{\eta}_{t}\| y_{x_t}  -  y_t \|^2 + 3\tilde{L}_1^2\sum_{\tau=t}^{t+k-1} \eta_{\tau} \| z_t  -  z_\tau\|^2 \nonumber\\
     &\qquad + 4L^2\bar{\eta}_t \|u_{x_{t}} - u_{t}\|^2 + 4L^2 \sum_{\tau=t}^{t+k-1}\eta_{\tau}\|u_{t} - u_{\tau}\|^2 + \frac{3}{\bar{\eta}_t}\|  \sum_{\tau=t}^{t+k-1} \eta_{\tau}\big( \nabla_x f(x_t, y_t)  -  \nabla_x f(x_t, y_t; \xi_{\tau,2})\big) \|^2 \nonumber\\
     &\qquad + \frac{6C_f^2}{\bar{\eta}_t\mu^2}\|  \sum_{\tau=t}^{t+k-1} \eta_{\tau}\big( \nabla_{xy} g(x_t, y_t)  -  \nabla_{xy} g(x_t, y_t; \zeta_{\tau,3})\big) \|^2
\end{align*}
\end{lemma}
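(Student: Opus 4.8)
The plan is to run an \emph{aggregated} descent step on the outer objective over the whole block $[t,t+k)$ rather than step-by-step. Summing the $x$-update over the block gives the net move $x_{t+k}-x_t=-\sum_{\tau=t}^{t+k-1}\eta_\tau\nu_\tau=-\bar\eta_t\bar\nu_t$, so a single application of the $\bar L$-smoothness of $h$ established in the preceding proposition (part c) over this block yields
\[
h(x_{t+k})\le h(x_t)-\bar\eta_t\langle\nabla h(x_t),\bar\nu_t\rangle+\tfrac{\bar L}{2}\bar\eta_t^2\|\bar\nu_t\|^2 .
\]

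First I would split the cross term with the polarization identity $-\langle a,b\rangle=-\tfrac12\|a\|^2-\tfrac12\|b\|^2+\tfrac12\|a-b\|^2$ applied to $a=\nabla h(x_t)$ and $b=\bar\nu_t$, producing $-\tfrac{\bar\eta_t}{2}\|\nabla h(x_t)\|^2-\tfrac{\bar\eta_t}{2}\|\bar\nu_t\|^2+\tfrac{\bar\eta_t}{2}\|\nabla h(x_t)-\bar\nu_t\|^2$. The stepsize condition $\bar\eta_t<\tfrac1{2\bar L}$ gives $\tfrac{\bar L}{2}\bar\eta_t^2\le\tfrac{\bar\eta_t}{4}$, which lets the curvature term be absorbed into the negative $\|\bar\nu_t\|^2$ term, leaving $-\tfrac{\bar\eta_t}{4}\|\bar\nu_t\|^2$. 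This already supplies the first three terms of the claim and reduces everything to controlling the residual $\tfrac{\bar\eta_t}{2}\|\nabla h(x_t)-\bar\nu_t\|^2$.

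The main step is to bound this residual. Writing $\nabla h(x_t)-\bar\nu_t=\tfrac1{\bar\eta_t}\sum_{\tau=t}^{t+k-1}\eta_\tau(\nabla h(x_t)-\nu_\tau)$ turns it into $\tfrac1{2\bar\eta_t}\big\|\sum_{\tau=t}^{t+k-1}\eta_\tau(\nu_\tau-\nabla h(x_t))\big\|^2$, which is exactly the quantity estimated by the preceding lemma. I would invoke the bound~\eqref{eq:nu-diff-final} at the endpoint $\tau=t+k$ (so that $\bar\eta_t^{\,t+k}=\bar\eta_t$) and divide through by $2\bar\eta_t$; each of its six terms then maps onto a term of the claim, with the $\bar\eta_t^2$-prefactors collapsing to $\bar\eta_t$, the single-$\bar\eta_t$ prefactors on the inner sums collapsing to the constants $3\tilde L_1^2$ and $4L^2$, and the two sampling-bias sums acquiring the prefactors $\tfrac{3}{\bar\eta_t}$ and $\tfrac{6C_f^2}{\bar\eta_t\mu^2}$.

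Since the genuinely delicate smoothness bookkeeping (disentangling the coupled $y$, $u$, and $x$ deviations and isolating the permutation-dependent bias sums) has already been absorbed into the preceding lemma, I do not expect a real obstacle in this descent step itself; the only care needed is to keep the endpoint substitution $\tau=t+k$ consistent so that $\bar\eta_t$ denotes precisely the block sum $\sum_{\tau=t}^{t+k-1}\eta_\tau$, and to use the strict stepsize inequality when absorbing the curvature term into $-\tfrac{\bar\eta_t}{4}\|\bar\nu_t\|^2$.
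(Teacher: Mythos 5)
Your proposal is correct and follows essentially the same route as the paper's proof: apply the $\bar{L}$-smoothness of $h$ to the aggregated block update $x_{t+k}-x_t=-\bar{\eta}_t\bar{\nu}_t$, split the cross term with the polarization identity, absorb the curvature term using $\bar{\eta}_t<\tfrac{1}{2\bar{L}}$ to leave $-\tfrac{\bar{\eta}_t}{4}\|\bar{\nu}_t\|^2$, and bound the residual $\tfrac{\bar{\eta}_t}{2}\|\nabla h(x_t)-\bar{\nu}_t\|^2$ by invoking Eq.~\eqref{eq:nu-diff-final} at the endpoint $\tau=t+k$ and dividing by $2\bar{\eta}_t$. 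The constant bookkeeping you describe ($6\to3$, $8\to4$, and the $\tfrac{3}{\bar{\eta}_t}$, $\tfrac{6C_f^2}{\bar{\eta}_t\mu^2}$ prefactors) matches the paper exactly.
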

\begin{proof}
By the smoothness of $f$, we have:
\begin{align}
\label{eq:h-desc}
     h(x_{t+k})
    & \leq    h(x_{t}) + \langle \nabla h(x_{t}),  x_{t+k} - x_{t}\rangle + \frac{\bar{L}}{2} \| x_{t+k} - x_{t } \|^2  \nonumber\\
    &  =   h(x_{t}) - \bar{\eta}_t \langle \nabla h(x_{t}),  \bar{\nu}_t \rangle + \frac{\bar{\eta}_t^2 \bar{L}}{2} \| \bar{\nu}_t  \|^2 \nonumber\\
    &  \overset{(a)}{=}  h(x_{t}) - \frac{\bar{\eta}_t}{2} \|\nabla h(x_{t})\|^2  + \frac{\bar{\eta}_t}{2}\|\nabla h(x_{t}) - \bar{\nu}_t\|^2   - \left(\frac{\bar{\eta}_t}{2} - \frac{\bar{\eta}_t^2 \bar{L}}{2}\right) \| \bar{\nu}_t \|^2\nonumber\\
    & \overset{(b)}{\leq} h(x_{t}) - \frac{\bar{\eta}_t}{2} \|\nabla h(x_{t})\|^2  + \frac{\bar{\eta}_t}{2}\|\nabla h(x_{t}) - \bar{\nu}_t\|^2   - \frac{\bar{\eta}_t}{4} \| \bar{\nu}_t \|^2
\end{align}
where equality $(a)$ uses $\langle a , b \rangle = \frac{1}{2} [\|a\|^2 + \|b\|^2 - \|a - b \|^2]$;  (b) follows the assumption that  $\bar{\eta}_t < 1/2\bar{L}$. Next, for the third term, by using Eq.~\eqref{eq:nu-diff-final}, we have:
\begin{align*}
    \frac{\bar{\eta}_t}{2}\|  \nabla h(x_{t})  - \bar{\nu}_t \|^2 &\leq 3\tilde{L}_1^2 \bar{\eta}_{t}\| y_{x_t}  -  y_t \|^2 + 3\tilde{L}_1^2\sum_{\tau=t}^{t+k-1} \eta_{\tau} \| z_t  -  z_\tau\|^2 \nonumber\\
    &\qquad + 4L^2\bar{\eta}_t \|u_{x_{t}} - u_{t}\|^2 + 4L^2 \sum_{\tau=t}^{t+k-1}\eta_{\tau}\|u_{t} - u_{\tau}\|^2   \nonumber\\
    &\qquad + \frac{3}{\bar{\eta}_t}\|  \sum_{\tau=t}^{t+k-1} \eta_{\tau}\big( \nabla_x f(x_t, y_t)  -  \nabla_x f(x_t, y_t; \xi_{\tau,2})\big) \|^2 \nonumber\\
    &\qquad + \frac{6C_f^2}{\bar{\eta}_t\mu^2}\|  \sum_{\tau=t}^{t+k-1} \eta_{\tau}\big( \nabla_{xy} g(x_t, y_t)  -  \nabla_{xy} g(x_t, y_t; \zeta_{\tau,3})\big) \|^2
\end{align*}
where $\tilde{L}_1^2 = \big(L^2 + \frac{2L_{xy}^2C_f^2}{\mu^2}\big)$. This completes the proof.
\end{proof}

\begin{lemma}
\label{lemma:y-diff}
When $\bar{\gamma}_t < \frac{1}{4L}$, the error of the inner variable $y_t$ are bounded through Eq.~\eqref{eq:y-error}.
\end{lemma}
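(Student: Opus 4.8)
The plan is to treat the block $[t,t+k]$ as a single inexact full-gradient step of the inner recursion toward the (shifting) target $y_{x_t}$. First I would sum the inner update~\eqref{eq:update-BO.a} over the block, which by the definitions in the preamble gives $y_{t+k} = y_t - \bar{\gamma}_t \bar{w}_t$ with $\bar{w}_t$ the step-size-weighted average of the sampled gradients $w_\tau = \nabla_y g(x_\tau,y_\tau;\zeta_{\tau,1})$. The key maneuver is to add and subtract the full-batch gradient $\nabla_y g(x_t,y_t)$ evaluated at the block's starting point, so that
\[
y_{t+k} - y_{x_t} = \bigl(y_t - y_{x_t} - \bar{\gamma}_t \nabla_y g(x_t, y_t)\bigr) - \bar{\gamma}_t\bigl(\bar{w}_t - \nabla_y g(x_t, y_t)\bigr).
\]

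Next I would bound the two pieces separately. For the first (contraction) piece I would invoke $\mu$-strong convexity and $L$-smoothness of $g(x_t,\cdot)$ together with the optimality condition $\nabla_y g(x_t,y_{x_t})=0$ to obtain the standard estimate $\|y_t - y_{x_t} - \bar{\gamma}_t \nabla_y g(x_t,y_t)\|^2 \leq (1-\mu\bar{\gamma}_t)\|y_t - y_{x_t}\|^2$, which is exactly where the step-size condition $\bar{\gamma}_t < \tfrac{1}{4L}$ enters. For the second (error) piece I would observe that $\bar{\gamma}_t(\bar{w}_t - \nabla_y g(x_t,y_t)) = \sum_{\tau=t}^{t+k-1}\gamma_\tau\bigl(\nabla_y g(x_\tau,y_\tau;\zeta_{\tau,1}) - \nabla_y g(x_t,y_t)\bigr)$ is precisely the quantity already controlled in Eq.~\eqref{eq:w-diff-final}, which splits it into an intra-block drift term $L^2\bar{\gamma}_t\sum_\tau\gamma_\tau\|z_\tau - z_t\|^2$ and an average gradient-error term that Assumption~\ref{assumption:ave-grad-err} renders $O(k^{-\alpha})$. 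Applying Young's inequality $\|a-b\|^2 \leq (1+\lambda)\|a\|^2 + (1+\lambda^{-1})\|b\|^2$ with $\lambda = \Theta(\mu\bar{\gamma}_t)$ then merges the pieces so that the $(1-\mu\bar{\gamma}_t)$ contraction survives up to a constant factor while the error terms absorb a benign $1/\bar{\gamma}_t$.

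Finally, because the potential function measures the error against the moving target $y_{x_{t+k}}$ rather than $y_{x_t}$, I would add and subtract $y_{x_t}$ once more and use part (a) of Proposition~\ref{some smoothness} (Lipschitz continuity of $y_x$ with constant $\kappa$) to bound $\|y_{x_t} - y_{x_{t+k}}\|^2 \leq \kappa^2\|x_{t+k}-x_t\|^2 = \kappa^2\bar{\eta}_t^2\|\bar{\nu}_t\|^2$, folding the outer-variable movement into Eq.~\eqref{eq:y-error}. The main obstacle I anticipate is controlling the intra-block drift $\|z_\tau - z_t\|^2$ self-consistently: this term couples the inner-variable error with the within-block motion of both $x$ and $y$ and cannot simply be discarded. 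I expect to handle it by bounding $\sum_\tau\gamma_\tau\|z_\tau - z_t\|^2$ through the per-step updates (each displacement accumulates at most $O(\bar\gamma_t)$, $O(\bar\eta_t)$, $O(\bar\rho_t)$ worth of gradient magnitudes) and then leaning on the smallness guaranteed by $\bar{\gamma}_t < \tfrac{1}{4L}$ to keep the resulting coefficient strictly below one, so that the contraction is preserved once all terms are collected.
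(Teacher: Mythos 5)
Your proposal follows essentially the same route as the paper: the paper also collapses the block into a single inexact gradient step $y_{t+k}=y_t-\bar{\gamma}_t\bar{w}_t$, applies its Proposition~\ref{prop:strong-prog} (an inexact-gradient-descent contraction, proved internally by the same strong-convexity/smoothness-plus-Young reasoning you sketch) to obtain a $(1-\frac{\mu\bar{\gamma}_t}{2})$ contraction toward $y_{x_t}$ with an additive $\frac{4\bar{\gamma}_t}{\mu}\|\nabla_y g(x_t,y_t)-\bar{w}_t\|^2$ error, controls that error via Eq.~\eqref{eq:w-diff-final}, and then switches to the moving target $y_{x_{t+k}}$ exactly as you do, using the generalized triangle inequality with parameter $\frac{\mu\bar{\gamma}_t}{4}$ together with the $\kappa$-Lipschitzness of $y_x$ from Proposition~\ref{some smoothness}.

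Two points, however. First, a concrete gap: the contraction estimate you quote, $\|y_t-y_{x_t}-\bar{\gamma}_t\nabla_y g(x_t,y_t)\|^2\le(1-\mu\bar{\gamma}_t)\|y_t-y_{x_t}\|^2$, discards the gradient-norm term, whereas Eq.~\eqref{eq:y-error} --- the inequality the lemma asserts --- contains the negative term $-\frac{\bar{\gamma}_t^2}{2}\|\nabla_y g(x_t,y_t)\|^2$. That term is not decorative: in the proof of Theorem~\ref{theorem:BO-app} it is multiplied by $\phi_y$ and used to cancel the positive $4c_1^2\bar{\eta}_t^3\|\nabla_y g(x_t,y_t)\|^2$ contributions produced by the drift bounds of Lemma~\ref{lemma:var-drift}; proving only the weakened bound would break that cancellation. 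The fix is easy and stays within your argument: for $\bar{\gamma}_t<\frac{1}{4L}\le\frac{2}{\mu+L}$ the standard one-step estimate actually gives $(1-\mu\bar{\gamma}_t)\|y_t-y_{x_t}\|^2-\bar{\gamma}_t\big(\frac{2}{\mu+L}-\bar{\gamma}_t\big)\|\nabla_y g(x_t,y_t)\|^2$, whose negative term dominates $-\frac{\bar{\gamma}_t^2}{2}\|\nabla_y g(x_t,y_t)\|^2$; this is precisely what Proposition~\ref{prop:strong-prog} packages. Second, the obstacle you anticipate at the end is a non-issue at this stage: the lemma is not required to control the intra-block drift $\sum_{\tau}\gamma_{\tau}\|z_t-z_{\tau}\|^2$ self-consistently. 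Eq.~\eqref{eq:y-error} simply carries that sum (and the $\|x_t-x_{t+k}\|^2$ term) forward explicitly, and these are resolved only later, in Lemma~\ref{lemma:var-drift} and in the potential-function argument. Attempting to absorb the drift inside this lemma by bounding displacements with accumulated gradient magnitudes would yield a bound of a different form than Eq.~\eqref{eq:y-error}, so you should leave those terms untouched here.
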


\begin{proof}
then by proposition~\ref{prop:strong-prog} and choose $\bar{\gamma}_t < \frac{1}{4L}$, we have:
\begin{align*}
\|y_{t+k}-y_{x_{t}}\|^2 &\leq ( 1-\frac{\mu\bar{\gamma}_t}{2})\|y_{t} - y_{x_{t}}\|^2 - \frac{\bar{\gamma}_t^2}{2} \|\nabla_y g(x_t,y_t)\|^2 + \frac{4\bar{\gamma}_t}{\mu}\|\nabla_y g(x_t,y_t)-\bar{w}_t\|^2
\end{align*}
Furthermore, by the generalized triangle inequality, we have:
\begin{align}
\label{eq:y-error}
\|y_{t+k} - y_{x_{t+k}} \|^2  &\leq (1 + \frac{\mu\bar{\gamma}_t}{4})\|y_{t+k} - y_{x_{t}}\|^2  + (1 + \frac{4}{\mu\bar{\gamma}_t})\|y_{x_{t+k}} - y_{x_{t}}\|^2 \nonumber\\
&\leq (1 + \frac{\mu\bar{\gamma}_t}{4})\|y_{t+k} - y_{x_{t}}\|^2  +  \frac{5\kappa^2}{\mu\bar{\gamma}_t}\|x_t - x_{t+k}\|^2 \nonumber\\
&\leq ( 1-\frac{\mu\bar{\gamma}_t}{4})\|y_{t} - y_{x_{t}}\|^2 - \frac{\bar{\gamma}_t^2}{2} \|\nabla_y g(x_t,y_t)\|^2 + \frac{5\kappa^2}{\mu\bar{\gamma}_t}\|x_t - x_{t+k}\|^2 \nonumber\\
&\qquad + \frac{10L^2}{\mu}\sum_{\tau=t}^{t+k-1} \gamma_{\tau} \| z_t  -  z_\tau\|^2 + \frac{10}{\mu\bar{\gamma}_t}\|  \sum_{\tau=t}^{t+k-1} \gamma_{\tau}\big( \nabla_y g(x_t, y_t)  -  \nabla_y g(x_t, y_t; \zeta_{\tau,1})\big) \|^2
\end{align}
where the second inequality is due to $\bar{\gamma}_t < \frac{1}{4L} < \frac{1}{\mu}$ and uses Eq.~\eqref{eq:w-diff-final}. This completes the proof.
\end{proof}

\begin{lemma}
\label{lemma:u-diff}
When $\bar{\rho}_t < \frac{1}{4L}$, the error of variable $u_t$ are bounded through Eq.~\eqref{eq:u-error}.
\end{lemma}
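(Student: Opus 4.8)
The plan is to mirror the analysis of the inner variable $y_t$ in Lemma~\ref{lemma:y-diff}, treating the $u$-update as inexact gradient descent on the strongly convex surrogate $r_{x_t}(u) = \frac{1}{2}u^T \nabla_{y^2} g(x_t, y_{x_t}) u - \nabla_y f(x_t, y_{x_t})^T u$, whose unique minimizer is $u_{x_t}$. Over the block $[t, t+k]$ the update accumulates to $u_{t+k} = u_t - \bar{\rho}_t \bar{q}_t$, and the direction $\bar{q}_t$ plays exactly the role that $\bar{w}_t$ played for $y$: it is an inexact version of the full gradient $\nabla r_{x_t}(u_t)$, with the mismatch already controlled by Eq.~\eqref{eq:q-diff-final}. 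First I would invoke the strong-progress proposition (Proposition~\ref{prop:strong-prog}) applied to $r_{x_t}$ with step size $\bar{\rho}_t < \frac{1}{4L}$ to obtain a one-block contraction toward $u_{x_t}$:
\[
\|u_{t+k} - u_{x_t}\|^2 \leq \left(1 - \tfrac{\mu\bar{\rho}_t}{2}\right)\|u_t - u_{x_t}\|^2 - \tfrac{\bar{\rho}_t^2}{2}\|\nabla r_{x_t}(u_t)\|^2 + \tfrac{4\bar{\rho}_t}{\mu}\|\nabla r_{x_t}(u_t) - \bar{q}_t\|^2 .
\]

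Next I would convert this into a bound on $\|u_{t+k} - u_{x_{t+k}}\|^2$, the quantity actually tracked by the potential, via the generalized triangle inequality exactly as in Eq.~\eqref{eq:y-error}: $\|u_{t+k} - u_{x_{t+k}}\|^2 \leq (1 + \frac{\mu\bar{\rho}_t}{4})\|u_{t+k} - u_{x_t}\|^2 + (1 + \frac{4}{\mu\bar{\rho}_t})\|u_{x_{t+k}} - u_{x_t}\|^2$. Combining with the contraction above collapses the leading factor to $(1 - \frac{\mu\bar{\rho}_t}{4})$, and the cross term needs the Lipschitz continuity of $u_x$ in $x$: I would show $\|u_{x_{t+k}} - u_{x_t}\|^2 \leq L_u^2 \|x_{t+k} - x_t\|^2$ for some $L_u = O(\kappa^2)$, so that $(1 + \frac{4}{\mu\bar{\rho}_t})\|u_{x_{t+k}} - u_{x_t}\|^2 \leq \frac{5 L_u^2}{\mu\bar{\rho}_t}\|x_t - x_{t+k}\|^2$, using $\bar{\rho}_t < \frac{1}{4L} \le \frac{1}{\mu}$.

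The Lipschitz estimate for $u_x$ is the one genuinely new ingredient. Since $u_x = [\nabla_{y^2} g(x, y_x)]^{-1}\nabla_y f(x, y_x)$, I would combine (i) $\|[\nabla_{y^2} g]^{-1}\| \leq \frac{1}{\mu}$ and $\|\nabla_y f\| \leq C_f$ from Assumptions~\ref{assumption:f_smoothness}-\ref{assumption:g_smoothness}, (ii) the $\kappa$-Lipschitzness of $y_x$ from Proposition~\ref{some smoothness}(a), and (iii) the Lipschitzness of $\nabla_y f$ and $\nabla_{y^2} g$, through the standard product/inverse perturbation bound $\|A_1^{-1}b_1 - A_2^{-1}b_2\| \leq \frac{1}{\mu}\|b_1 - b_2\| + \frac{C_f}{\mu^2}\|A_1 - A_2\|$; this is essentially the computation already hidden inside Proposition~\ref{some smoothness}(b). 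Finally I would substitute Eq.~\eqref{eq:q-diff-final} into the last term of the contraction, noting $\|\nabla r_{x_t}(u_t) - \bar{q}_t\|^2 = \bar{\rho}_t^{-2}\|\sum_\tau \rho_\tau(\nabla r_{x_t}(u_t) - q_\tau)\|^2$, which expands it into the $\|y_{x_t} - y_t\|^2$, $\sum_\tau \rho_\tau\|z_t - z_\tau\|^2$, $\sum_\tau \rho_\tau\|u_t - u_\tau\|^2$ and two sample-error terms appearing in Eq.~\eqref{eq:u-error}.

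I expect the main obstacle to be bookkeeping rather than any single hard estimate: unlike the $y$-recursion, the mismatch $\bar{q}_t - \nabla r_{x_t}(u_t)$ couples $u$ to the drift of both $y$ and $u$ within the block (via the $\|y_{x_t} - y_t\|^2$ and $\|u_t - u_\tau\|^2$ contributions of Eq.~\eqref{eq:q-diff-final}), so I must keep the coefficients of these coupling terms explicit, since they have to be absorbed later when the three error recursions for $x$, $y$, $u$ are summed into the potential $\mathcal{G}_t$. A secondary point to handle cleanly is the projection $\mathcal{P}_{\iota}$: because $\|u_{x_t}\| \le \frac{C_f}{\mu} \le \iota$, the projection is nonexpansive toward $u_{x_t}$ and can only tighten the contraction, which I would note in passing so that the block identity $u_{t+k} = u_t - \bar{\rho}_t \bar{q}_t$ remains valid up to a harmless inequality.
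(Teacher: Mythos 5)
Your proposal matches the paper's own proof essentially step for step: it applies Proposition~\ref{prop:strong-prog} to the strongly convex surrogate $r_{x_t}$ with the aggregated step $u_{t+k} = u_t - \bar{\rho}_t\bar{q}_t$, then the generalized triangle inequality with the $O(\kappa^2)$-Lipschitzness of $u_x$ (the paper's $\hat{L}$ from Proposition~\ref{some smoothness}) to pass from $u_{x_t}$ to $u_{x_{t+k}}$, and finally substitutes Eq.~\eqref{eq:q-diff-final} to expand the mismatch term into exactly the components of Eq.~\eqref{eq:u-error}. The additional remarks on the projection $\mathcal{P}_{\iota}$ and on tracking coupling coefficients are sensible but do not change the argument.
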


\begin{proof}
suppose we choose $\bar{\rho}_{t} < \frac{1}{4L}$, then we have:
\begin{align*}
\|u_{t+k}-u_{x_t}\|^2 & \leq ( 1-\frac{\mu\bar{\rho}_t}{2})\|u_{t} - u_{x_t}\|^2 - \frac{(\bar{\rho}_t)^2}{2}\|\nabla r_{x_t}(u_t)\|^2 + \frac{4\bar{\rho}_t}{\mu}\|\nabla r_{x_t}(u_t) -\bar{q}_{t}\|^2
\end{align*}
Furthermore, by the generalized triangle inequality, we have:
\begin{align}
\label{eq:u-error}
\|u_{t+k} - u_{x_{t+k}} \|^2  &\leq (1 + \frac{\mu\bar{\rho}_t}{4})\|u_{t+k} - u_{x_{t}}\|^2  + (1 + \frac{4}{\mu\bar{\rho}_t})\|u_{x_{t+k}} - u_{x_{t}}\|^2 \nonumber\\
&\leq (1 + \frac{\mu\bar{\rho}_t}{4})\|u_{t+k} - u_{x_{t}}\|^2  +  \frac{5{\hat{L}}^2}{\mu\bar{\rho}_t}\|x_t - x_{t+k}\|^2 \nonumber\\
&\leq ( 1-\frac{\mu\bar{\rho}_t}{4})\|u_{t} - u_{x_{t}}\|^2 - \frac{\bar{\rho}_t^2}{2} \|\nabla r_{x_t}(u_t)\|^2 + \frac{5{\hat{L}}^2}{\mu\bar{\rho}_t}\|x_t - x_{t+k}\|^2 \nonumber\\
&\qquad + \frac{30\tilde{L}_2^2\bar{\rho}_t}{\mu}\| y_{x_t}  -  y_t \|^2 + \frac{30\tilde{L}_2^2}{\mu}\sum_{\tau=t}^{t+k-1} \rho_{\tau} \| z_t  -  z_{\tau}\|^2 + \frac{20L^2}{\mu}\sum_{\tau=t}^{t+k-1}\rho_{\tau}\|u_{t} - u_{\tau}\|^2 \nonumber\\
&\qquad + \frac{30}{\mu\bar{\rho}_t}\|  \sum_{\tau=t}^{t+k-1} \rho_{\tau}\big( \nabla_y f(x_t, y_t)  -  \nabla_y f(x_t, y_t; \xi_{\tau,1})\big) \|^2 \nonumber\\
&\qquad + \frac{60 C_f^2}{\mu^3\bar{\rho}_t}\|  \sum_{\tau=t}^{t+k-1} \rho_{\tau}\big( \nabla_{y^2} g(x_t, y_t)  -  \nabla_{y^2} g(x_t, y_t; \zeta_{\tau,2})\big) \|^2
\end{align}
where in the last inequality, we use the condition that $\bar{\rho}_{t} < \frac{1}{4L}$ and  Eq.~\eqref{eq:q-diff-final}. This completes the proof.
\end{proof}

\begin{lemma}
\label{lemma:var-drift}
    Suppose $\bar{\eta}_t \leq \min\left(\frac{1}{96\tilde{L}_1^2}, \frac{1}{32L^2c_1^2}, \frac{1}{128L^2}, \frac{1}{64L^2c_2^2}, \frac{1}{96\tilde{L}_2^2c_2^2}\right)$, then the drift of $z_\tau = (x_\tau, y_\tau)$ and $u_\tau$ can be bounded as in Eq.~\eqref{eq:z-drift-final} and Eq.~\eqref{eq:u-drift-final}. 
\end{lemma}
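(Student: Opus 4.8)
The plan is to bound the two aggregate drift quantities $D_z := \sum_{\tau=t}^{t+k-1}\|z_t - z_\tau\|^2$ and $D_u := \sum_{\tau=t}^{t+k-1}\|u_t - u_\tau\|^2$ by unrolling the updates of Eq.~\eqref{eq:update-BO} over the block $[t,t+k]$. First I would write, for each $\tau$ in the block, $x_\tau - x_t = -\sum_{\bar\tau=t}^{\tau-1}\eta_{\bar\tau}\nu_{\bar\tau}$, $y_\tau - y_t = -\sum_{\bar\tau=t}^{\tau-1}\gamma_{\bar\tau}w_{\bar\tau}$ and $u_\tau - u_t = -\sum_{\bar\tau=t}^{\tau-1}\rho_{\bar\tau}q_{\bar\tau}$, and then apply to each a Young-type split that subtracts the block-anchored mean direction, e.g. $\|x_\tau - x_t\|^2 \le 2\|\sum_{\bar\tau=t}^{\tau-1}\eta_{\bar\tau}(\nu_{\bar\tau}-\nabla h(x_t))\|^2 + 2(\bar\eta_t^\tau)^2\|\nabla h(x_t)\|^2$, and analogously for $y$ (anchored at $\nabla_y g(x_t,y_t)$) and for $u$ (anchored at $\nabla r_{x_t}(u_t)$). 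The two anchored gradient norms are controlled by the block-start distances to the optimizers, since $\nabla_y g(x_t,y_{x_t})=0$ and $\nabla r_{x_t}(u_{x_t})=0$ together with $L$-smoothness give $\|\nabla_y g(x_t,y_t)\|^2\le L^2\|y_t-y_{x_t}\|^2$ and $\|\nabla r_{x_t}(u_t)\|^2\le L^2\|u_t-u_{x_t}\|^2$.

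Next I would bound the deviation terms using the estimates already proved: Eq.~\eqref{eq:nu-diff-final} for the $\nu$-deviation, Eq.~\eqref{eq:w-diff-final} for the $w$-deviation, and Eq.~\eqref{eq:q-diff-final} for the $q$-deviation. The essential feature of these estimates is that their right-hand sides already contain the drift sums $\sum_{\bar\tau<\tau}\eta_{\bar\tau}\|z_t-z_{\bar\tau}\|^2$ and $\sum_{\bar\tau<\tau}\rho_{\bar\tau}\|u_t-u_{\bar\tau}\|^2$ (up to the rate scalings $\gamma=c_1\eta$, $\rho=c_2\eta$), so after summing over $\tau\in[t,t+k-1]$ I obtain inequalities for $D_z$ and $D_u$ whose right-hand sides again feature $D_z$ and $D_u$. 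The block summation is where the double sums appear; I would dispose of them with $\bar\eta_t^\tau\le\bar\eta_t$ and the standard swap $\sum_{\tau}\sum_{\bar\tau<\tau}\eta_{\bar\tau}(\cdot)\le k\sum_{\bar\tau}\eta_{\bar\tau}(\cdot)=\bar\eta_t\sum_{\bar\tau}(\cdot)$, which converts each nested sum into a single factor of $\bar\eta_t$ and hence produces self-feedback coefficients of the form $\tilde L_1^2\bar\eta_t^2$, $L^2\bar\eta_t^2$, $L^2c_1^2\bar\eta_t^2$, $L^2c_2^2\bar\eta_t^2$ and $\tilde L_2^2 c_2^2\bar\eta_t^2$, alongside the non-feedback terms $\bar\eta_t^2\|\nabla h(x_t)\|^2$, $\bar\eta_t^2\|y_t-y_{x_t}\|^2$, $\bar\eta_t^2\|u_t-u_{x_t}\|^2$ and the average-gradient-error quantities of Assumption~\ref{assumption:ave-grad-err}.

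The crux, and the step I expect to be the main obstacle, is that $D_z$ and $D_u$ are coupled: the $z$-drift bound feeds $D_u$ (through the $u$-dependence of $\nu$) while the $u$-drift bound feeds $D_z$ (through the $z$-dependence of $q$), and each also feeds itself, so neither can be bounded in isolation. I would therefore treat the resulting pair of inequalities as a $2\times 2$ linear system in $(D_z,D_u)$. The five entries of the step-size condition $\bar\eta_t\le\min(\tfrac{1}{96\tilde L_1^2},\tfrac{1}{32L^2c_1^2},\tfrac{1}{128L^2},\tfrac{1}{64L^2c_2^2},\tfrac{1}{96\tilde L_2^2c_2^2})$ are chosen precisely so that each of the five feedback coefficients above is at most a small absolute constant; this makes the coupling matrix a strict contraction, allowing me to move all $D_z,D_u$ terms to the left-hand side and invert. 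Solving the $2\times 2$ system and collecting the remaining terms then yields the claimed bounds Eq.~\eqref{eq:z-drift-final} and Eq.~\eqref{eq:u-drift-final}, expressed through $\|\nabla h(x_t)\|^2$, $\|y_t-y_{x_t}\|^2$, $\|u_t-u_{x_t}\|^2$ and the average-gradient-error terms, which completes the proof.
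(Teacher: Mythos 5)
Your proposal follows essentially the same route as the paper's proof: the same Young-type split anchored at $\nabla h(x_t)$, $\nabla_y g(x_t,y_t)$ and $\nabla r_{x_t}(u_t)$ (the paper's Eq.~\eqref{eq:diff}), the same substitution of the deviation bounds Eq.~\eqref{eq:nu-diff-final}, Eq.~\eqref{eq:w-diff-final}, Eq.~\eqref{eq:q-diff-final}, the same double-sum swap with $\bar{\eta}_t^{\tau}\leq\bar{\eta}_t$, and the same resolution of the coupled pair of inequalities in the two drift sums under the stated step-size conditions (which the paper performs implicitly when it ``combines'' Eq.~\eqref{eq:z-drift} and Eq.~\eqref{eq:u-drift}). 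The only cosmetic difference is the direction of the distance/gradient-norm conversion: you bound the anchored norms $\|\nabla_y g(x_t,y_t)\|^2$, $\|\nabla r_{x_t}(u_t)\|^2$ by $L^2\|y_t-y_{x_t}\|^2$, $L^2\|u_t-u_{x_t}\|^2$ via smoothness, whereas the paper keeps the gradient norms (and absorbs the distance terms into them via strong convexity), so your final bounds are an equivalent-up-to-$\kappa^2$-constants reformulation of Eq.~\eqref{eq:z-drift-final} and Eq.~\eqref{eq:u-drift-final} that serves the same purpose in the proof of Theorem~\ref{theorem:BO-app}.
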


\begin{proof}
We first have:
\begin{align}
\label{eq:diff}
    \| x_t  -  x_\tau\|^2 &= \|\sum_{\bar{\tau}=t}^{\tau - 1} \eta_{\bar{\tau}}\nu_{\bar{\tau}}\|^2 \leq 2\|\sum_{\bar{\tau}=t}^{\tau - 1} \eta_{\bar{\tau}}\big(\nu_{\bar{\tau}} - \nabla h(x_t)\big)\|^2 + 2(\bar{\eta}_{t}^{\tau})^2\|\nabla h(x_t)\|^2 \nonumber\\
    \| y_t  -  y_\tau\|^2 &= \|\sum_{\bar{\tau}=t}^{\tau - 1} \gamma_{\bar{\tau}}\omega_{\bar{\tau}}\|^2 \leq 2\|\sum_{\bar{\tau}=t}^{\tau - 1} \gamma_{\bar{\tau}}\big(\nabla_y g (x_{\bar{\tau}}, y_{\bar{\tau}}, \zeta_{\bar{\tau}, 1}) - \nabla_y g (x_{t}, y_{t}) \big)\|^2 + 2(\bar{\gamma}_{t}^{\tau})^2\|\nabla_y g (x_{t}, y_{t})\|^2 \nonumber\\
    \|u_t - u_{\tau}\|^2 &= \|\sum_{\bar{\tau}=t}^{\tau - 1} \rho_{\bar{\tau}}q_{\bar{\tau}}\|^2 \leq 2\|\sum_{\bar{\tau}=t}^{\tau - 1} \rho_{\bar{\tau}}\big(q_{\bar{\tau}} - \nabla r_{x_t}(u_t)\big)\|^2 + 2(\bar{\rho}_{t}^{\tau})^2\|\nabla r_{x_t}(u_t)\|^2
\end{align}
For the term $\| z_t  -  z_\tau\|^2$, we have $   \| z_t  -  z_\tau\|^2 = \| x_t  -  x_\tau\|^2 + \| y_t  -  y_\tau\|^2$. Combine Eq.~\eqref{eq:diff} with Eq.~\eqref{eq:nu-diff-final} and~\eqref{eq:w-diff-final},  sum $\tau$ over $[t, t+k-1]$ and use Proposition~\ref{prop:sample-bias} to have:
\begin{align*}
    \sum_{\tau=t}^{t+k-1}\eta_{\tau} \| z_t  -  z_\tau\|^2 &\leq \sum_{\tau=t}^{t+k-1} \eta_{\tau}\big(\sum_{\bar{\tau}=t}^{\tau-1} \big(12\tilde{L}_1^2\bar{\eta}_t^{\tau}\eta_{\bar{\tau}} + 4L^2\bar{\gamma}_{t}^{\tau}\gamma_{\bar{\tau}}\big) \| z_t  -  z_{\bar{\tau}}\|^2\big) \nonumber\\
    &\qquad + 2\bar{\alpha}_{t}\bar{\eta}_t^2\|\nabla h(x_t)\|^2 + 2\bar{\alpha}_{t}\bar{\gamma}_t^2\| \nabla_y g (x_{t}, y_{t})\|^2\nonumber\\
    &\qquad+ 12\tilde{L}_1^2\bar{\alpha}_{t}(\bar{\eta}_t)^2 \| y_{x_t}  -  y_t \|^2 + 16L^2 \bar{\alpha}_{t}(\bar{\eta}_{t})^2\|u_{x_{t}} - u_{t}\|^2\nonumber\\
    &\qquad  + 16L^2\bar{\eta}_t \sum_{\tau=t}^{t+k-1} \eta_{\tau}\big(\sum_{\bar{\tau}=t}^{\tau-1}\eta_{\bar{\tau}}\|u_{t} - u_{\bar{\tau}}\|^2\big) \nonumber\\
    &\qquad + 12\bar{\eta}_{t}\eta_t^2k^{2-\alpha}C^2  + \frac{24C_f^2}{\mu^2} \bar{\eta}_{t}\eta_t^2k^{2-\alpha}C^2  + 4\bar{\eta}_t\gamma_t^2k^{2-\alpha}C^2
\end{align*}
By the fact that $\tau \leq t+k$, we have:
\begin{align}
\label{eq:z-drift}
    \sum_{\tau=t}^{t+k-1}\eta_\tau \| z_t  -  z_\tau\|^2 &\leq  \sum_{\tau=t}^{t+k-1} \big(12\tilde{L}_1^2(\bar{\eta}_t)^2\eta_{\tau} + 4L^2\bar{\eta}_{t}\bar{\gamma}_{t}\gamma_{\tau}\big) \| z_t  -  z_{\tau}\|^2 \nonumber\\
    &\qquad + 2\bar{\eta}_t^3\|\nabla h(x_t)\|^2 + 2\bar{\eta}_{t}\bar{\gamma}_t^2\| \nabla_y g (x_{t}, y_{t})\|^2\nonumber\\
    &\qquad+ 12\tilde{L}_1^2(\bar{\eta}_t)^3 \| y_{x_t}  -  y_t \|^2 + 16L^2 (\bar{\eta}_{t})^3\|u_{x_{t}} - u_{t}\|^2\nonumber\\
    &\qquad  + 16L^2(\bar{\eta}_t)^2\sum_{\tau=t}^{t+k-1}\eta_{\tau}\|u_{t} - u_{\tau}\|^2 \nonumber\\
    &\qquad + 12\bar{\eta}_{t}\eta_t^2k^{2-\alpha}C^2  + \frac{24C_f^2}{\mu^2} \bar{\eta}_{t}\eta_t^2k^{2-\alpha}C^2  + 4\bar{\eta}_t\gamma_t^2k^{2-\alpha}C^2
\end{align}
For $\|u_t - u_{\tau}\|^2$, we combine Eq.~\eqref{eq:diff} with Eq.~\eqref{eq:q-diff-final} and sum $\tau$ over $[t, t+k-1]$ and use the fact that $\tau \leq t+k$ to have:
\begin{align}
\label{eq:u-drift}
\sum_{\tau=t}^{t+k-1}\eta_\tau\|u_t - u_{\tau}\|^2 &\leq 2\bar{\eta}_t(\bar{\rho}_{t})^2\|\nabla r_{x_t}(u_t)\|^2 + 12\tilde{L}_2^2\bar{\eta}_t(\bar{\rho}_t)^2\| y_{x_t}  -  y_t \|^2\nonumber\\
&\qquad + 12\tilde{L}_2^2\bar{\eta}_t\bar{\rho}_t\sum_{\tau=t}^{t+k-1} \rho_{\tau} \| z_t  -  z_{\tau}\|^2  + 8L^2\bar{\eta}_t\bar{\rho}_t \sum_{\tau=t}^{t+k-1}\rho_{\tau}\|u_{t} - u_{\tau}\|^2 \nonumber\\
&\qquad + 12\bar{\eta}_t\rho_t^2k^{2-\alpha}C^2 + \frac{24 C_f^2\bar{\eta}_t}{\mu^2}\rho_t^2k^{2-\alpha}C^2
\end{align}
Suppose we have $\gamma_\tau = c_1\eta_\tau = \frac{20\kappa\tilde{L}_3}{\mu}\eta_\tau$, $\rho_\tau = c_2\eta_\tau = 40\kappa\hat{L}\eta_\tau$ and set:
\[
(\bar{\eta}_t)^2 < \min\left(\frac{1}{96\tilde{L}_1^2}, \frac{1}{32L^2c_1^2}, \frac{1}{128L^2}, \frac{1}{64L^2c_2^2}, \frac{1}{96\tilde{L}_2^2c_2^2}\right)
\] 
Then we combine Eq.~\eqref{eq:z-drift} and Eq.~\eqref{eq:u-drift} to have:
\begin{align}
\label{eq:u-drift-final}
\sum_{\tau=t}^{t+k-1}\eta_\tau\|u_t - u_{\tau}\|^2 &\leq 4c_2^2(\bar{\eta}_{t})^3\|\nabla r_{x_t}(u_t)\|^2 + 4\bar{\eta}_t^3\|\nabla h(x_t)\|^2 + 4c_1^2\bar{\eta}_{t}^3\| \nabla_y g (x_{t}, y_{t})\|^2  \nonumber\\
&\qquad + \big(24  + \frac{48C_f^2}{\mu^2}  + 8c_1^2 + 24c_2^2 + \frac{48c_2^2 C_f^2}{\mu^2}\big)\bar{\eta}_{t}\eta_t^2k^{2-\alpha}C^2
\end{align}
and 
\begin{align}
\label{eq:z-drift-final}
\sum_{\tau=t}^{t+k-1}\eta_\tau\|z_t - z_{\tau}\|^2
&\leq 4c_2^2(\bar{\eta}_{t})^3\|\nabla r_{x_t}(u_t)\|^2 + 4\bar{\eta}_t^3\|\nabla h(x_t)\|^2 + 4c_1^2\bar{\eta}_{t}^3\| \nabla_y g (x_{t}, y_{t})\|^2  \nonumber\\
&\qquad + \big(24  + \frac{48C_f^2}{\mu^2}  + 8c_1^2 + 24c_2^2 + \frac{48c_2^2 C_f^2}{\mu^2}\big)\bar{\eta}_{t}\eta_t^2k^{2-\alpha}C^2
\end{align}
This completes the proof.
\end{proof}

Suppose we define the potential function $\mathcal{G}(t)$:
\begin{align*}
    \mathcal{G}_t &= h(x_{t}) + \phi_y\|y_t - y_{x_{t}} \|^2 + \phi_u \|u_t - u_{x_t} \|^2
\end{align*}
where $\phi_u = \frac{32L^2\bar{\eta}_t}{\mu\bar{\rho}_t}$, $\phi_y = \frac{8\tilde{L}_3^2\bar{\eta}_t}{\mu\bar{\gamma}_t}$ and $\tilde{L}_3^2 = 960\kappa^2\tilde{L}_2^2 + 3\tilde{L}_1^2$.

\begin{theorem}
\label{theorem:BO-app}
Suppose Assumptions~\ref{assumption:f_smoothness}-\ref{assumption:bounded_diff2} are satisfied, and Assumption~\ref{assumption:ave-grad-err} is satisfied with some $\alpha$ and $C$ for the example order we use in Algorithm~\ref{alg:BiO}. We choose learning rates $\eta_t = \eta$, $\gamma = c_1 \eta$, $\rho = c_2 \eta$ and denote $T = R\times I$ be the total number of steps. Then for any pair of values (E, k) which has $T = E\times k$ and $\eta \leq \frac{1}{k\tilde{L}_0}$, we have:
\begin{align*}
   \frac{1}{E}\sum_{e = 0}^{E-1} \|\nabla h(x_{ke})\|^2 &\leq \frac{2\Delta}{kE\eta} + 2\tilde{L}^2k^{-\alpha}C^2 
\end{align*}
where $c_1$, $c_2$, $\tilde{L}_0$, $\tilde{L}$ are constants related to the smoothness parameters of $h(x)$, $\Delta$ is the initial sub-optimality, $R$ and $I$ are defined in Algorithm~\ref{alg:BiO}.
\end{theorem}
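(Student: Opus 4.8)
The plan is to establish a one-block descent inequality for the Lyapunov function $\mathcal{G}_t = h(x_t) + \phi_y\|y_t - y_{x_t}\|^2 + \phi_u\|u_t - u_{x_t}\|^2$ and then telescope it over the $E$ blocks of length $k$ that partition the $T = Ek$ iterations. The starting point is to add three ingredients already in hand: the descent bound of Lemma~\ref{lemma:desent} for $h(x_{t+k}) - h(x_t)$, the contraction bound of Lemma~\ref{lemma:y-diff} (Eq.~\eqref{eq:y-error}) scaled by $\phi_y$, and the contraction bound of Lemma~\ref{lemma:u-diff} (Eq.~\eqref{eq:u-error}) scaled by $\phi_u$. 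Their sum gives $\mathcal{G}_{t+k} - \mathcal{G}_t$ on the left, and on the right a combination of (i) the signal term $-\frac{\bar{\eta}_t}{2}\|\nabla h(x_t)\|^2$, (ii) the contraction factors $-\frac{\mu\bar{\gamma}_t}{4}\phi_y\|y_t-y_{x_t}\|^2$ and $-\frac{\mu\bar{\rho}_t}{4}\phi_u\|u_t-u_{x_t}\|^2$, (iii) auxiliary negative terms $-\frac{\bar{\eta}_t}{4}\|\bar{\nu}_t\|^2$, $-\phi_y\frac{\bar{\gamma}_t^2}{2}\|\nabla_y g\|^2$, $-\phi_u\frac{\bar{\rho}_t^2}{2}\|\nabla r_{x_t}(u_t)\|^2$, together with (iv) positive cross-coupling terms (state errors and the drifts $\sum_\tau\eta_\tau\|z_t-z_\tau\|^2$, $\sum_\tau\eta_\tau\|u_t-u_\tau\|^2$) and (v) block-level sampling-bias terms.

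The second step is to discharge all coupling terms. First I would fix $\phi_y$ and $\phi_u$ exactly as stated so the contraction dominates the state errors: the positive $\|y_t-y_{x_t}\|^2$ contributions coming from Lemma~\ref{lemma:desent} (coefficient $3\tilde{L}_1^2\bar{\eta}_t$) and from the $\phi_u$-weighted Eq.~\eqref{eq:u-error} (coefficient $\phi_u\frac{30\tilde{L}_2^2\bar{\rho}_t}{\mu}$) are absorbed by $\frac{\mu\bar{\gamma}_t}{4}\phi_y = 2\tilde{L}_3^2\bar{\eta}_t$ with $\tilde{L}_3^2 = 960\kappa^2\tilde{L}_2^2 + 3\tilde{L}_1^2$, while $4L^2\bar{\eta}_t\|u_t-u_{x_t}\|^2$ is absorbed by $\frac{\mu\bar{\rho}_t}{4}\phi_u = 8L^2\bar{\eta}_t$. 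Next I would invoke the drift bounds of Lemma~\ref{lemma:var-drift} (Eqs.~\eqref{eq:z-drift-final} and~\eqref{eq:u-drift-final}), which re-express both drift sums as $O(\bar{\eta}_t^3)$ multiples of $\|\nabla h(x_t)\|^2$, $\|\nabla_y g\|^2$, $\|\nabla r_{x_t}(u_t)\|^2$ plus block bias. Since these gradient contributions carry two extra powers of $\bar{\eta}_t$ relative to the matching negative terms in (i)/(iii), they vanish under the step-size ceiling $\eta\le 1/(k\tilde{L}_0)$ (equivalently the bound $\bar{\eta}_t^2\le\min(\cdots)$ of Lemma~\ref{lemma:var-drift}), and the $\|x_t-x_{t+k}\|^2 = \bar{\eta}_t^2\|\bar{\nu}_t\|^2$ terms entering Eqs.~\eqref{eq:y-error} and~\eqref{eq:u-error} are absorbed by $-\frac{\bar{\eta}_t}{4}\|\bar{\nu}_t\|^2$ once the ratios $c_1 = 20\kappa\tilde{L}_3/\mu$ and $c_2 = 40\kappa\hat{L}$ are fixed. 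The outcome is the clean one-block inequality $\mathcal{G}_{t+k}\le \mathcal{G}_t - \frac{\bar{\eta}_t}{2}\|\nabla h(x_t)\|^2 + (\text{block bias})$.

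Third, I would bound the leftover sampling-bias terms using Assumption~\ref{assumption:ave-grad-err}: with constant step size, each term of the form $\|\sum_{\tau=t}^{t+k-1}\eta(\nabla(\cdot) - \nabla(\cdot;\text{sample}))\|^2$ equals $\eta^2 k^2$ times an average gradient error, hence is at most $\eta^2 k^{2-\alpha}C^2$; after dividing by the $\bar{\eta}_t = k\eta$ factors present in the coefficients, each such term collapses to $O(\eta k^{1-\alpha}C^2)$, which is exactly the per-block contribution yielding the final $2\tilde{L}^2 k^{-\alpha}C^2$. Finally I would telescope $\mathcal{G}_{ke}$ over $e = 0,\dots,E-1$, use $\mathcal{G}_T \ge \inf_x h(x)$ together with $\mathcal{G}_0 - \inf_x h \le \Delta$, and divide through by $\frac{\bar{\eta}_t}{2}E = \frac{k\eta}{2}E$ to recover the stated bound.

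The hardest part will be the simultaneous absorption in the second step: the three sequences $x_t$, $y_t$, $u_t$ are mutually coupled over a block — the $y$- and $u$-errors feed the drift, and the drift feeds back both into the descent of $h$ and into every contraction estimate — so one must verify that the entire coupled system is contractive rather than merely each piece in isolation. Concretely, a single choice of $(\phi_y,\phi_u,c_1,c_2)$ and the step-size ceiling must make every positive coefficient strictly smaller than its matching negative coefficient \emph{at once}; the auxiliary negative terms $\|\bar{\nu}_t\|^2$, $\|\nabla_y g\|^2$, $\|\nabla r_{x_t}(u_t)\|^2$ are indispensable here precisely because Lemma~\ref{lemma:var-drift} expresses the drift through these gradient norms. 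This aggregated, block-level bookkeeping, rather than the more familiar per-step analysis, is exactly what converts the $\alpha=2$ average-error rate of without-replacement sampling into the improved $O(\epsilon^{-3})$ complexity.
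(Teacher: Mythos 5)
Your proposal is correct and follows essentially the same route as the paper's own proof: the identical potential function $\mathcal{G}_t$ with the same weights $\phi_y = \frac{8\tilde{L}_3^2\bar{\eta}_t}{\mu\bar{\gamma}_t}$, $\phi_u = \frac{32L^2\bar{\eta}_t}{\mu\bar{\rho}_t}$, the same combination of Lemmas~\ref{lemma:desent}--\ref{lemma:u-diff}, absorption of the drift terms via Lemma~\ref{lemma:var-drift} and the step-size ceiling with $c_1 = 20\kappa\tilde{L}_3/\mu$, $c_2 = 40\kappa\hat{L}$, bias control via Assumption~\ref{assumption:ave-grad-err}, and telescoping over the $E$ blocks. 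Your accounting of the absorption constants (e.g., $3\tilde{L}_1^2\bar{\eta}_t + 960\kappa^2\tilde{L}_2^2\bar{\eta}_t = \tilde{L}_3^2\bar{\eta}_t$ dominated by $\frac{\mu\bar{\gamma}_t}{4}\phi_y = 2\tilde{L}_3^2\bar{\eta}_t$, and the $\|x_t - x_{t+k}\|^2$ terms collapsing into $-\frac{\bar{\eta}_t}{4}\|\bar{\nu}_t\|^2$) matches the paper's bookkeeping exactly.
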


\begin{proof}
First we combine Lemma~\ref{lemma:desent}-Lemma~\ref{lemma:u-diff}, and use the condition that $\gamma_t = \frac{20\kappa\tilde{L}_3}{\mu}\eta_t$, $\rho_t = 40\kappa\hat{L}\eta_t$, and by the assumption about gradient error:
\begin{align}
\label{eq:g-desc}
    \mathcal{G}_{t+k} - \mathcal{G}_t &\leq - \frac{\bar{\eta}_t}{2} \|\nabla h(x_{t})\|^2   - \frac{4c_1\tilde{L}_3^2(\bar{\eta}_t)^2}{\mu} \|\nabla_y g(x_t,y_t)\|^2 - \frac{16c_2L^2(\bar{\eta}_t)^2}{\mu}\|\nabla r_{x_t}(u_t)\|^2\nonumber\\
    &\qquad - \tilde{L}_3^2\bar{\eta}_t\| y_{x_t}  -  y_t \|^2  - 4L^2\bar{\eta}_t\|u_{x_{t}} - u_{t}\|^2 \nonumber\\
    &\qquad + \sum_{\tau=t}^{t+k-1}\big(960\kappa^2\tilde{L}_2^2\eta_{\tau}  + 3\tilde{L}_1^2\eta_{\tau}  + 80\kappa^2\tilde{L}_3^2\eta_{\tau}\big)\| z_t  -  z_\tau\|^2\nonumber\\
    &\qquad + \sum_{\tau=t}^{t+k-1}\big(640\kappa^2L^2\eta_{\tau}+ 4L^2\eta_{\tau}\big)\|u_{t} - u_{\tau}\|^2 \nonumber\\
    &\qquad + \big(3 + \frac{6C_f^2}{\mu^2}  + \frac{80\tilde{L}_3^2}{\mu^2} + 960\kappa^2 + \frac{1920\kappa^2C_f^2}{\mu^2}\big)\frac{\eta_t^2k^{2-\alpha}C^2}{\bar{\eta}_t}  
\end{align}
Suppose we denote $\tilde{L}_4^2 = \big(960\kappa^2\tilde{L}_2^2  + 3\tilde{L}_1^2  + 80\kappa^2\tilde{L}_3^2 + 640\kappa^2L^2+ 4L^2\big)$, and set:
\[
(\bar{\eta}_t)^2 < \min\big(\frac{1}{16\tilde{L}_4^2}, \frac{\tilde{L}_3^4}{c_1^2\mu^2\tilde{L}_4^4}, \frac{16L^4}{c_2^2\mu^2\tilde{L}_4^4}, \frac{\tilde{L}_3^2}{48\tilde{L}_4^2\tilde{L}_2^2c_2^2}, \frac{\tilde{L}_3^2}{48\tilde{L}_4^2\tilde{L}_1^2}, \frac{1}{8\tilde{L}_4^2}\big)
\]
and we bound the terms $\|u_{x_{t}} - u_{t}\|^2$ and $\| z_t  -  z_\tau\|^2$ through Lemma~\ref{lemma:var-drift} to have:
\begin{align*}
    \mathcal{G}_{t+k} - \mathcal{G}_t &\leq - \frac{\bar{\eta}_t}{2} \|\nabla h(x_{t})\|^2 + \big(24  + \frac{48C_f^2}{\mu^2}  + 8c_1^2 + 24c_2^2 + \frac{48c_2^2 C_f^2}{\mu^2}\big)\tilde{L}_4^2\bar{\eta}_{t}\eta_t^2k^{2-\alpha}C^2 \nonumber\\
    &\qquad + \big(3 + \frac{6C_f^2}{\mu^2}  + \frac{80\tilde{L}_3^2}{\mu^2} + 960\kappa^2 + \frac{1920\kappa^2C_f^2}{\mu^2}\big)\frac{\eta_t^2k^{2-\alpha}C^2}{\bar{\eta}_t}  
\end{align*}
Next suppose we set $\tilde{L}_4\bar{\eta}_{t} < 1$, then we have:
\begin{align*}
    \mathcal{G}_{t+k} - \mathcal{G}_t &\leq - \frac{\bar{\eta}_t}{2} \|\nabla h(x_{t})\|^2 + \big(24  + \frac{48C_f^2}{\mu^2}  + 8c_1^2 + 24c_2^2 + \frac{48c_2^2 C_f^2}{\mu^2} + \nonumber\\
    &\qquad 3 + \frac{6C_f^2}{\mu^2}  + \frac{80\tilde{L}_3^2}{\mu^2} + 960\kappa^2 + \frac{1920\kappa^2C_f^2}{\mu^2}\big)\frac{\eta_t^2k^{2-\alpha}C^2}{\bar{\eta}_t}  
\end{align*}
For ease of notation we denote $\tilde{L}^2 = \big(24  + \frac{48C_f^2}{\mu^2}  + 8c_1^2 + 24c_2^2 + \frac{48c_2^2 C_f^2}{\mu^2} + 3 + \frac{6C_f^2}{\mu^2}  + \frac{80\tilde{L}_3^2}{\mu^2} + 960\kappa^2 + \frac{1920\kappa^2C_f^2}{\mu^2}\big)$, then we have:
\begin{align*}
    \mathcal{G}_{t+k} - \mathcal{G}_t &\leq - \frac{\bar{\eta}_t}{2} \|\nabla h(x_{t})\|^2 + \frac{\tilde{L}^2\eta_t^2k^{2-\alpha}C^2}{\bar{\eta}_t}  
\end{align*}
Sum the above inequality over $E$ phases to have:
\begin{align*}
   \sum_{e = 0}^{E-1} \frac{\bar{\eta}_{ke}}{2}\|\nabla h(x_{ke})\|^2 &\leq \mathcal{G}_{0} - \mathcal{G}_{kE} + \sum_{e = 0}^{E-1}\frac{\tilde{L}^2\eta_{ke}^2k^{2-\alpha}C^2}{\bar{\eta}_{ke}} \nonumber\\
    &\leq \Delta_h + \phi_y\Delta_y + \phi_u\Delta_u + \sum_{e = 0}^{E-1}\frac{\tilde{L}^2\eta_{ke}^2k^{2-\alpha}C^2}{\bar{\eta}_{ke}} \nonumber\\
\end{align*}
we define $\Delta_h = h(x_{0}) - h^*$ as the initial sub-optimality of the function, $\Delta_y = \|y_0 - y_{x_{0}}\|^2$ as the initial sub-optimality of the inner variable estimation, $\Delta_u =  \|u_0 - u_{x_{0}}\|^2$ as the initial sub-optimality of the hyper-gradient estimation. Then suppose we choose $\eta_t = \eta$ be some constant, then we have:
\begin{align*}
   \frac{k\eta}{2}\sum_{e = 0}^{E-1} \|\nabla h(x_{ke})\|^2 &\leq \Delta_h + \phi_y\Delta_y + \phi_u\Delta_u + \tilde{L}^2Ek^{1-\alpha}C^2\eta \nonumber\\
\end{align*}
Divide by $kE\eta/2$ on both sides to have:
\begin{align*}
   \frac{1}{E}\sum_{e = 0}^{E-1} \|\nabla h(x_{ke})\|^2 &\leq \frac{2}{kE\eta}\big(\Delta_h + \phi_y\Delta_y + \phi_u\Delta_u\big) + 2\tilde{L}^2k^{-\alpha}C^2 
\end{align*}
Suppose we choose the largest $\eta = \frac{1}{k\tilde{L}_0}$ such that the conditions of Lemma~\ref{lemma:desent}-Lemma~\ref{lemma:var-drift} are satisfied, then we have:
\begin{align*}
   \frac{1}{E}\sum_{e = 0}^{E-1} \|\nabla h(x_{ke})\|^2 &\leq \frac{2\tilde{L}_0}{E}\big(\Delta + \phi_y\Delta_y + \phi_u\Delta_u\big) + 2\tilde{L}^2k^{-\alpha}C^2 \nonumber\\
\end{align*}
Then to reach an $\epsilon$-stationary point, we need to have:
\[
E \geq \frac{4\tilde{L}_0\Delta}{\epsilon^2}, \text{ and }, k \geq \left(\frac{4\tilde{L}^2C^2}{\epsilon^2}\right)^{1/\alpha}
\]
in other words $T = Ek \geq \frac{4\tilde{L}_0(4\tilde{L}^2C^2)^{1/\alpha}\Delta}{\epsilon^{2+2/\alpha}}$. This completes the proof.
\end{proof}

\subsection{Proof for conditional bilevel optimization problems}
% \begin{align*}
%     x_{t+1} - x_t = \eta_t (\nabla_x f^{\xi_{t}}(x_t, y_{T_{l}}) - \nabla_{xy}g^{\xi_{t}}(x_t, y_{T_{l}})u_{T_{l}})
% \end{align*}
% We denote $\bar{\eta}_t = \sum_{\tau=t}^{t+k-1}\eta_\tau$ and $\bar{\nu}_t = (x_{t+k} - x_t)/\bar{\eta}_t$    
In this section we study the convergence rate of Algorithm~\ref{alg:BiO-Cond}.
\begin{lemma}
\label{lemma:desent-cond}
For all $t \geq 0$ and any constant $k > 0$, suppose $\bar{\eta}_t < \frac{1}{4\bar{L}}$, the iterates generated satisfy:
\begin{align*}
    h(x_{t+k}) &\leq h(x_{t}) - \frac{k\eta}{8} \|\nabla h(x_{t})\|^2  - \frac{k\eta}{4} \| \bar{\nu}_t \|^2 + 3\big(1 + \frac{4\kappa^2(192C_f^2+ 96*24\tilde{L}_2^2)}{\mu^2} + \frac{48\tilde{L}_1^2}{\mu^2}\big)\eta k^{1-\alpha}C^2\nonumber\\
    &\qquad +12\eta kL^2( 1-\frac{k\mu\rho}{2})^{E_{l}}\Delta_u + 3\eta k(2\tilde{L}_1^2+4*96\kappa^2\tilde{L}_2^2)( 1-\frac{k\mu\gamma}{2})^{E_{l}}\Delta_y
\end{align*}
\end{lemma}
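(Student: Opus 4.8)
The plan is to follow the template of the unconditional descent bound in Lemma~\ref{lemma:desent}, but to account for the double-loop structure of Algorithm~\ref{alg:BiO-Cond}: here $\bar{\nu}_t$ is the $\eta$-weighted average over the outer block $[t,t+k)$ of the inexact directions $\nu_\tau = \nabla_x f(x_\tau,\hat{y}_\tau;\xi_\tau^\pi) - \nabla_{xy} g^{\xi_\tau^\pi}(x_\tau,\hat{y}_\tau)\hat{u}_\tau$, where $\hat{y}_\tau,\hat{u}_\tau$ are the \emph{outputs} of the inner loop rather than exact minimizers. First I would invoke the $\bar{L}$-smoothness of $h$ (Proposition~\ref{some smoothness}(c)) over the block, apply the polarization identity $\langle a,b\rangle=\tfrac12(\|a\|^2+\|b\|^2-\|a-b\|^2)$, and use $\bar{\eta}_t=k\eta<\tfrac{1}{4\bar{L}}$ to reach
\begin{align*}
h(x_{t+k}) \le h(x_t) - \tfrac{k\eta}{2}\|\nabla h(x_t)\|^2 - \tfrac{k\eta}{4}\|\bar{\nu}_t\|^2 + \tfrac{k\eta}{2}\|\nabla h(x_t)-\bar{\nu}_t\|^2.
\end{align*}
Everything then reduces to controlling the averaged bias $\|\nabla h(x_t)-\bar{\nu}_t\|^2$.

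Next I would split this bias three ways by inserting the exact sample hyper-gradient $\nabla h(x;\xi) = \Phi^{\xi}(x,y_x^{\xi},u_x^{\xi})$ and using $\|a+b+c\|^2\le 3(\|a\|^2+\|b\|^2+\|c\|^2)$: an \emph{inner-loop} term $\|\Phi^{\xi_\tau}(x_\tau,\hat{y}_\tau,\hat{u}_\tau)-\nabla h(x_\tau;\xi_\tau)\|^2$, which by Lipschitz continuity of $\Phi^\xi$ in $(y,u)$ (the same constants $\tilde{L}_1,\tilde{L}_2,L$ as in Eqs.~\eqref{eq:nu-diff1}--\eqref{eq:nu-diff2}, together with $\|\nabla_{xy}g\|\le L$ and $\|u\|\le C_f/\mu$) is bounded by a multiple of $\|\hat{y}_\tau-y_{x_\tau}^{\xi_\tau}\|^2+\|\hat{u}_\tau-u_{x_\tau}^{\xi_\tau}\|^2$; an \emph{outer-drift} term $\tfrac1k\sum_\tau\|\nabla h(x_\tau;\xi_\tau)-\nabla h(x_t;\xi_\tau)\|^2 \le \tilde{L}^2 \tfrac1k\sum_\tau\|x_t-x_\tau\|^2$; and an \emph{average-gradient-error} term $\|\tfrac1k\sum_\tau\nabla h(x_t;\xi_\tau)-\nabla h(x_t)\|^2\le k^{-\alpha}C^2$, which is exactly the augmented form of Assumption~\ref{assumption:ave-grad-err} licensed by Assumption~\ref{assumption:bounded_diff3}.

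The central new ingredient is the inner-loop error. Because the outer iterate $x_t$ and the example $\xi_\tau$ are \emph{frozen} throughout the inner loop, I would specialize Lemmas~\ref{lemma:y-diff} and~\ref{lemma:u-diff} to this fixed-$x$ setting, where the targets $y_{x_t}^{\xi_\tau}$ and $u_{x_t}^{\xi_\tau}$ do not move; the drift contributions $\|y_{x_{t+k}}-y_{x_t}\|$ and $\|x_t-x_{t+k}\|$ therefore vanish, leaving clean per-phase contractions $\|y_{(s+1)}-y_x^{\xi}\|^2\le(1-\tfrac{k\mu\gamma}{2})\|y_{(s)}-y_x^{\xi}\|^2 + (\text{grad.\ error})$ and, analogously, one for $u$ carrying a residual coupling proportional to $\kappa^2\tilde{L}_2^2\|y_{(s)}-y_x^{\xi}\|^2$ that arises because the $u$-update uses the running $y_{(s)}$ rather than the converged inner solution. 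Unrolling over the $E_l$ inner phases then produces the geometric factors $(1-\tfrac{k\mu\gamma}{2})^{E_l}\Delta_y$ and $(1-\tfrac{k\mu\rho}{2})^{E_l}\Delta_u$; the $y\!\to\!u$ coupling (via the $1/\mu$ factors in the contraction times the Lipschitz constant $L$) is precisely what yields the $\kappa^2\tilde{L}_2^2$ coefficient on the $\Delta_y$ term, the $L^2$ prefactor on $\Delta_u$ is the $\|\nabla_{xy}g\|\le L$ bound, and all inner gradient-error residuals are folded into the $k^{-\alpha}C^2$ budget.

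Finally I would feed the drift bound back in. Since $x_t-x_\tau=\sum_{\bar{\tau}=t}^{\tau-1}\eta\,\nu_{\bar{\tau}}$, the quantity $\tfrac1k\sum_\tau\|x_t-x_\tau\|^2$ is of order $k^2\eta^2(\|\bar{\nu}_t\|^2+\|\nabla h(x_t)\|^2+\text{bias})$, so with the stated step sizes $\eta=\tfrac{1}{8k\bar{L}}$, $\gamma=\tfrac{1}{256kL\kappa}$, $\rho=\tfrac{1}{512kL\kappa}$ the self-referential drift contributes at most $\tfrac{3k\eta}{8}\|\nabla h(x_t)\|^2$ plus a negligible multiple of $\|\bar{\nu}_t\|^2$; combined with the head term $-\tfrac{k\eta}{2}\|\nabla h\|^2$ this reduces the coefficient of $\|\nabla h(x_t)\|^2$ from $\tfrac12$ to $\tfrac18$ while preserving $-\tfrac{k\eta}{4}\|\bar{\nu}_t\|^2$. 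Since the bias enters with the prefactor $\tfrac{k\eta}{2}$, the average-gradient-error piece $k^{-\alpha}C^2$ becomes $\eta k^{1-\alpha}C^2$, and collecting the three-way constants gives the stated coefficient $3(1+\tfrac{4\kappa^2(192C_f^2+96\cdot24\tilde{L}_2^2)}{\mu^2}+\tfrac{48\tilde{L}_1^2}{\mu^2})$. The main obstacle I anticipate is the simultaneous $y$--$u$ coupling inside the inner loop: getting the contraction factors and the $\kappa^2\tilde{L}_2^2$ cross-term right requires treating the two inner recursions jointly, and this must be paired with careful bookkeeping of the step-size conditions so that the self-referential outer-drift term is genuinely dominated.
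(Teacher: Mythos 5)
Your proposal is correct and follows essentially the same route as the paper's proof: the same smoothness-plus-polarization descent step, the same three-way insertion of the sample hyper-gradient $\nabla h(x_t;\xi_\tau)$ and $\nabla h(x_\tau;\xi_\tau)$ (giving the averaged-gradient-error, outer-drift, and inner-loop-error pieces), the same fixed-$x$ specialization of the inner contraction lemmas with the $y\!\to\!u$ coupling producing the $\kappa^2\tilde{L}_2^2$ coefficient, and the same self-referential resolution of the drift term that degrades the $\|\nabla h(x_t)\|^2$ coefficient from $\tfrac12$ to $\tfrac18$. The only differences are cosmetic (the paper writes the outer-drift Lipschitz constant as $\bar{L}$ and bounds the drift purely in terms of $\|\nabla h(x_t)\|^2$, inner errors, and the gradient-error budget rather than routing part of it through $\|\bar{\nu}_t\|^2$), so no substantive gap remains.
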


\begin{proof}
By the smoothness of $f$, follow similar derivation as in Eq.~\eqref{eq:h-desc}, we have:
\begin{align*}
     h(x_{t+k}) \leq   h(x_{t}) - \frac{\bar{\eta}_t}{2} \|\nabla h(x_{t})\|^2  + \frac{\bar{\eta}_t}{2}\|\nabla h(x_{t}) - \bar{\nu}_t\|^2   - \frac{\bar{\eta}_t}{4} \| \bar{\nu}_t \|^2
\end{align*}
In particular, for the term $\|\sum_{\bar{\tau}=t}^{\tau - 1} \eta_{\bar{\tau}}(\nu_{\bar{\tau}} - \nabla h(x_t))\|^2$,  we have:
\begin{align}
\label{eq:nu-diff-comp}
      &\big\| \sum_{{\bar{\tau}}=t}^{{\tau}-1} \eta_{{\bar{\tau}}}\big(\nabla h(x_t)  -  \nu_{\bar{\tau}} \big)\big\|^2 \nonumber\\
      &\leq 3\big\| \sum_{{\bar{\tau}}=t}^{{\tau}-1} \eta_{{\bar{\tau}}}\big(\nabla h(x_t)  -  \nabla h(x_t; \xi_{\bar{\tau}}) \big)\big\|^2 + 3\big\| \sum_{{\bar{\tau}}=t}^{{\tau}-1} \eta_{{\bar{\tau}}}\big(\nabla h(x_t; \xi_{\bar{\tau}}) - \nabla h(x_{\bar{\tau}}; \xi_{\bar{\tau}})\big)\big\|^2 + 3\big\| \sum_{{\bar{\tau}}=t}^{{\tau}-1} \eta_{{\bar{\tau}}}\big(\nabla h(x_{\bar{\tau}};\xi_{\bar{\tau}}) -  \nu_{\bar{\tau}} \big)\big\|^2\nonumber\\
      &\leq 3\big\| \sum_{{\bar{\tau}}=t}^{{\tau}-1} \eta_{{\bar{\tau}}}\big(\nabla h(x_t)  -  \nabla h(x_t; \xi_{\bar{\tau}}) \big)\big\|^2 + 3\bar{L}^2 \bar{\eta}_t^{\tau}\sum_{{\bar{\tau}}=t}^{{\tau}-1} \eta_{{\bar{\tau}}}\big\|x_t - x_{\bar{\tau}}\big\|^2 + 3\bar{\eta}_t^{\tau}\sum_{{\bar{\tau}}=t}^{{\tau}-1} \eta_{{\bar{\tau}}}\big\|\nabla h(x_{\bar{\tau}};\xi_{\bar{\tau}}) -  \nu_{\bar{\tau}}\big\|^2
\end{align}
For the third term, we follow similar derivation as in Eq.~\eqref{eq:nu-diff}-Eq.~\eqref{eq:nu-diff-final}, we have:
\begin{align}
\label{eq:nu-diff1-comp}
      &\big\|\big(\nabla h(x_{\tau}; \xi_{\tau}) -  \nu_{\tau}\big)\big\|^2 \nonumber\\
      &= \big\|\big( \nabla_x f(x_{\tau}, y^{\xi_{\tau}}_{x_{\tau}}; \xi_{\tau}) -  
      \nabla_{xy} g^{\xi_{\tau}}(x_{\tau},  y^{\xi_{\tau}}_{x_{\tau}})u^{\xi_{\tau}}_{x_{\tau}} -  (\nabla_x f(x_{\tau}, y_{\tau, T_{l}}; \xi_{\tau}) - \nabla_{xy}g^{\xi_{\tau}}(x_{\tau}, y_{\tau, T_{l}})u_{\tau, T_{l}}) \big)\big\|^2\nonumber\\
      &\leq 2\tilde{L}_1^2\| y^{\xi_{\tau}}_{x_{\tau}}  -  y_{\tau, T_{l}}\|^2  + 4L^2\| u^{\xi_{\tau}}_{x_{\tau}} - u_{\tau, T_{l}}\|^2  
\end{align}
where $\tilde{L}_1^2 = \big(L^2 + \frac{2L_{xy}^2C_f^2}{\mu^2}\big)$. Combine Eq.~\eqref{eq:nu-diff-comp} and Eq.~\eqref{eq:nu-diff1-comp}, we have:
\begin{align}
\label{eq:h-desc-cond}
     h(x_{t+k}) &\leq   h(x_{t}) - \frac{\bar{\eta}_t}{2} \|\nabla h(x_{t})\|^2  - \frac{\bar{\eta}_t}{4} \| \bar{\nu}_t \|^2 + \frac{3}{2\bar{\eta}_t}\big\| \sum_{\tau=t}^{t+k-1} \eta_{\tau}\big(\nabla h(x_t)  -  \nabla h(x_t; \xi_{\tau}) \big)\big\|^2\nonumber\\
     & + \frac{3\bar{L}^2}{2}\sum_{\tau=t}^{t+k-1} \eta_{\tau}\big\|x_t - x_{\tau}\big\|^2 + \sum_{\tau=t}^{t+k-1}\eta_\tau\big(3\tilde{L}_1^2\| y^{\xi_{\tau}}_{x_{\tau}}  -  y_{\tau, T_{l}}\|^2  + 6L^2\| u^{\xi_{\tau}}_{x_{\tau}} - u_{\tau, T_{l}}\|^2\big)
\end{align}
For the term $\sum_{\tau=t}^{t+k-1} \eta_{\tau}\big\|x_t - x_{\tau}\big\|^2$, by Eq.~\eqref{eq:nu-diff-comp} and Eq.~\eqref{eq:nu-diff1-comp}, we have:
\begin{align*}
    \big\|x_t - x_{\tau}\big\|^2 &\leq 2\|\sum_{\bar{\tau}=t}^{\tau - 1} \eta_{\bar{\tau}}\big(\nu_{\bar{\tau}} - \nabla h(x_t)\big)\|^2 + 2(\bar{\eta}_{t}^{\tau})^2\|\nabla h(x_t)\|^2 \nonumber\\
    &\leq 6\big\| \sum_{{\bar{\tau}}=t}^{{\tau}-1} \eta_{{\bar{\tau}}}\big(\nabla h(x_t)  -  \nabla h(x_t; \xi_{\bar{\tau}}) \big)\big\|^2 + 6\bar{L}^2 \bar{\eta}_t^{\tau}\sum_{{\bar{\tau}}=t}^{{\tau}-1} \eta_{{\bar{\tau}}}\big\|x_t - x_{\bar{\tau}}\big\|^2 \nonumber\\
    &\qquad + 6\bar{\eta}_t^{\tau}\sum_{{\bar{\tau}}=t}^{\tau-1} \eta_{{\bar{\tau}}}\big(2\tilde{L}_1^2\| y^{\xi_{\bar{\tau}}}_{x_{\bar{\tau}}}  -  y_{\bar{\tau}, T_{l}}\|^2  + 4L^2\| u^{\xi_{\bar{\tau}}}_{x_{\bar{\tau}}} - u_{\bar{\tau}, T_{l}}\|^2 \big) + 2(\bar{\eta}_{t}^{\tau})^2\|\nabla h(x_t)\|^2 \nonumber\\
    &\leq 6\eta_t^2k^{2-\alpha}C^2 + 6\bar{L}^2 \bar{\eta}_t\sum_{{\bar{\tau}}=t}^{t+k-1} \eta_{{\bar{\tau}}}\big\|x_t - x_{\bar{\tau}}\big\|^2 \nonumber\\
    &\qquad + 6\bar{\eta}_t\sum_{{\bar{\tau}}=t}^{t+k-1} \eta_{{\bar{\tau}}}\big(2\tilde{L}_1^2\| y^{\xi_{\bar{\tau}}}_{x_{\bar{\tau}}}  -  y_{\bar{\tau}, T_{l}}\|^2  + 4L^2\| u^{\xi_{\bar{\tau}}}_{x_{\bar{\tau}}} - u_{\bar{\tau}, T_{l}}\|^2 \big) + 2(\bar{\eta}_{t})^2\|\nabla h(x_t)\|^2
\end{align*}
Multiply $\eta_\tau$ on both sides and sum over $[t, t+k-1]$, we have:
\begin{align*}
    (1 - 6\bar{L}^2 (\bar{\eta}_t)^2)\sum_{\tau=t}^{t+k-1} \eta_{\tau}\big\|x_t - x_{\tau}\big\|^2 &\leq 6(\bar{\eta}_t)^2\sum_{{\bar{\tau}}=t}^{t+k-1} \eta_{{\bar{\tau}}}\big(2\tilde{L}_1^2\| y^{\xi_{\bar{\tau}}}_{x_{\bar{\tau}}}  -  y_{\bar{\tau}, T_{l}}\|^2  + 4L^2\| u^{\xi_{\bar{\tau}}}_{x_{\bar{\tau}}} - u_{\bar{\tau}, T_{l}}\|^2 \big)  \nonumber\\
    &\qquad + 2(\bar{\eta}_{t})^3\|\nabla h(x_t)\|^2 + 6\bar{\eta}_t\eta_t^2k^{2-\alpha}C^2
\end{align*}
By the condition that $\bar{\eta}_t < \frac{1}{4\bar{L}}$, we have:
\begin{align}
\label{eq:x-drift-cond}
    \sum_{\tau=t}^{t+k-1} \eta_{\tau}\big\|x_t - x_{\tau}\big\|^2 &\leq 12(\bar{\eta}_t)^2\sum_{{\bar{\tau}}=t}^{t+k-1} \eta_{{\bar{\tau}}}\big(2\tilde{L}_1^2\| y^{\xi_{\bar{\tau}}}_{x_{\bar{\tau}}}  -  y_{\bar{\tau}, T_{l}}\|^2  + 4L^2\| u^{\xi_{\bar{\tau}}}_{x_{\bar{\tau}}} - u_{\bar{\tau}, T_{l}}\|^2 \big)  \nonumber\\
    &\qquad + 4(\bar{\eta}_{t})^3\|\nabla h(x_t)\|^2 + 12\bar{\eta}_t\eta_t^2k^{2-\alpha}C^2
\end{align}
Combine Eq.~\eqref{eq:x-drift-cond} with Eq.~\eqref{eq:h-desc-cond}, then we have:
\begin{align}
     h(x_{t+k}) &\leq   h(x_{t}) - (\frac{\bar{\eta}_t}{2} - 6\bar{L}^2(\bar{\eta}_{t})^3) \|\nabla h(x_{t})\|^2  - \frac{\bar{\eta}_t}{4} \| \bar{\nu}_t \|^2 + \frac{3}{2\bar{\eta}_t}\big\| \sum_{\tau=t}^{t+k-1} \eta_{\tau}\big(\nabla h(x_t)  -  \nabla h(x_t; \xi_{\tau}) \big)\big\|^2\nonumber\\
     &\qquad +18\bar{L}^2\bar{\eta}_t\eta_t^2k^{2-\alpha}C^2 + (1 + 12(\bar{\eta}_t)^2\bar{L}^2)\sum_{\tau=t}^{t+k-1}\eta_\tau\big(3\tilde{L}_1^2\| y^{\xi_{\tau}}_{x_{\tau}}  -  y_{\tau, T_{l}}\|^2  + 6L^2\| u^{\xi_{\tau}}_{x_{\tau}} - u_{\tau, T_{l}}\|^2\big) \nonumber\\
     &\leq h(x_{t}) - \frac{\bar{\eta}_t}{8} \|\nabla h(x_{t})\|^2  - \frac{\bar{\eta}_t}{4} \| \bar{\nu}_t \|^2 + \frac{3}{\bar{\eta}_t}\eta_t^2k^{2-\alpha}C^2 \nonumber\\
     &\qquad + \sum_{\tau=t}^{t+k-1}\eta_\tau\big(6\tilde{L}_1^2\| y^{\xi_{\tau}}_{x_{\tau}}  -  y_{\tau, T_{l}}\|^2  + 12L^2\| u^{\xi_{\tau}}_{x_{\tau}} - u_{\tau, T_{l}}\|^2\big)
\end{align}
where the second inequality uses the conditions that $\bar{\eta}_t < \frac{1}{4\bar{L}}$.
By Lemma~\ref{lemma:drift-cond},we have:
\begin{align}
    h(x_{t+k}) &\leq h(x_{t}) - \frac{k\eta}{8} \|\nabla h(x_{t})\|^2  - \frac{k\eta}{4} \| \bar{\nu}_t \|^2 + 3\big(1 + \frac{4\kappa^2(192C_f^2+ 96*24\tilde{L}_2^2)}{\mu^2} + \frac{48\tilde{L}_1^2}{\mu^2}\big)\eta k^{1-\alpha}C^2\nonumber\\
    &\qquad +12\eta kL^2( 1-\frac{k\mu\rho}{2})^{E_{l}}\Delta_u + 3\eta k(2\tilde{L}_1^2+4*96\kappa^2\tilde{L}_2^2)( 1-\frac{k\mu\gamma}{2})^{E_{l}}\Delta_y
\end{align}
where $\Delta_u $ and $\Delta_y$ denotes the upper bounds of the initial estimation errors of $y_x$ and $u_x$. This completes the proof.
\end{proof}

\begin{lemma}
Suppose we have $\bar{\gamma}_{l} < \frac{1}{2L}$ and $\bar{\rho}_{l} < \frac{1}{2L}$, then $\sum_{l^{'}=l}^{l+k-1} \gamma_{l^{'}}\| y_{l^{'}} - y_{l} \|^2$ and $\sum_{l^{'}=l}^{l+k-1}\rho_{l^{'}}\|u_l - u_{l^{'}}\|^2$ can be bounded as in Eq.~\eqref{eq:y-drift-cond} and Eq.~\eqref{eq:u-drift-cond}
\end{lemma}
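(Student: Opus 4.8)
The plan is to mirror the drift analysis of Lemma~\ref{lemma:var-drift}, specialized to the inner loop of Algorithm~\ref{alg:BiO-Cond}, where the outer iterate $x_t$ and outer sample $\xi_t^\pi$ are held fixed throughout the $y$- and $u$-updates. Because $x$ does not move, the coupled $z$-drift $\|z_t - z_\tau\|^2 = \|x_t - x_\tau\|^2 + \|y_t - y_\tau\|^2$ appearing in Eq.~\eqref{eq:w-diff-final} and Eq.~\eqref{eq:q-diff-final} collapses to the pure $y$-drift $\|y_l - y_{l'}\|^2$, which simplifies the self-referential structure substantially. First I would write each drift as a telescoping sum of the update directions, $y_{l'}-y_l = -\sum_{\bar l=l}^{l'-1}\gamma_{\bar l}\nabla_y g(x_t,y_{\bar l};\zeta_{\bar l})$ and $u_{l'}-u_l = -\sum_{\bar l=l}^{l'-1}\rho_{\bar l}q_{\bar l}$, and split each squared norm via $\|a+b\|^2\le 2\|a\|^2+2\|b\|^2$ into a base-point gradient term ($\|\nabla_y g(x_t,y_l)\|^2$ and $\|\nabla r_{x_t}(u_l)\|^2$) and a deviation term, exactly as in Eq.~\eqref{eq:diff}.

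For the $y$-drift I would bound the deviation by the inner-loop analog of Eq.~\eqref{eq:w-diff-final}: $L$-smoothness of $g$ produces a self-referential term $2L^2\bar\gamma_l\sum_{l'}\gamma_{l'}\|y_l-y_{l'}\|^2$, and the remaining piece is precisely the average sample-gradient error of Assumption~\ref{assumption:ave-grad-err}, contributing order $\gamma^2 k^{2-\alpha}C^2$. Summing over the window $[l,l+k-1]$ and invoking $\bar\gamma_l<\frac{1}{2L}$ drives the self-referential coefficient to at most $\frac12$; moving it to the left-hand side closes the recursion and yields Eq.~\eqref{eq:y-drift-cond} in terms of $\|\nabla_y g(x_t,y_l)\|^2$ and $k^{2-\alpha}C^2$.

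For the $u$-drift I would use the inner-loop analog of Eq.~\eqref{eq:q-diff-final}. Its deviation splits into (i) terms proportional to the $y$-drift and the distance $\|y_l-y_{x_t}\|^2$ (arising because $q_{\bar l}$ depends on $y_{\bar l}$ through both $\nabla_{y^2}g$ and $\nabla_y f$, with the projection bound $\|u\|\le\iota$ from $\mathcal{P}_\iota$ controlling the Hessian-Lipschitz contribution), (ii) a self-referential $u$-drift term scaled by $L^2\bar\rho_l$, and (iii) the sample-bias terms for $\nabla_y f$ and $\nabla_{y^2}g$. With $\bar\rho_l<\frac{1}{2L}$ the self term is absorbed; I then substitute the $y$-drift bound Eq.~\eqref{eq:y-drift-cond}, convert $\|y_l-y_{x_t}\|^2\le\frac{1}{\mu^2}\|\nabla_y g(x_t,y_l)\|^2$ and $\|u_l-u_{x_t}\|^2\le\frac{1}{\mu^2}\|\nabla r_{x_t}(u_l)\|^2$ by strong convexity, and obtain Eq.~\eqref{eq:u-drift-cond} in terms of $\|\nabla r_{x_t}(u_l)\|^2$, $\|\nabla_y g(x_t,y_l)\|^2$, and $k^{2-\alpha}C^2$.

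The main obstacle is the \textbf{sequential coupling} of the two drifts: the $u$-drift bound references the $y$-drift, so the two self-bounding fixed-point arguments must be resolved in order ($y$ first, then $u$) rather than independently. The delicate bookkeeping is verifying that the step-size conditions $\bar\gamma_l,\bar\rho_l<\frac{1}{2L}$ simultaneously keep each self-referential coefficient strictly below one and keep the cross terms small enough that substituting the $y$-bound into the $u$-bound does not re-inflate the $u$-drift. A secondary subtlety, absent in the purely single-level analyses, is that the quadratic-in-$u$ form of $q_l$ forces reliance on the projection-enforced bound $\|u\|\le\iota$ to keep the $O(C_f^2/\mu^2)$ Hessian-Lipschitz terms finite, exactly paralleling the role of $C_f$-boundedness in the derivation of Eq.~\eqref{eq:q-diff-final}.
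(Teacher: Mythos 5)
Your proposal is correct in its core mechanics, and those mechanics match the paper's: telescope each drift into a sum of update directions, split off the base-point gradient term ($\|\nabla_y g(x,y_l)\|^2$, resp.\ $\|\nabla r_{x}(u_l)\|^2$), bound the deviation by smoothness plus the averaged sample bias of Assumption~\ref{assumption:ave-grad-err} (Proposition~\ref{prop:sample-bias}), and absorb the self-referential drift sum using $\bar\gamma_l,\bar\rho_l<\tfrac{1}{2L}$; your $y$-drift bound is essentially Eq.~\eqref{eq:y-drift-cond}. Where you genuinely diverge is the $u$-drift. The paper never analyzes the interleaved updates: in the proof of Lemma~\ref{lemma:drift-cond} it explicitly assumes the inner loop runs sequentially --- $y$ is driven to its final value $y_{T_l}$ first, and only then is $u$ updated with $y$ frozen at $y_{T_l}$. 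Hence $q_{l'}$ depends on a static $y_{T_l}$, the $u$-drift never couples to the $y$-drift, and Eq.~\eqref{eq:u-drift-cond} carries the single static error term $\|y_{x}-y_{T_l}\|^2$ rather than any $y$-drift sum or $\|\nabla_y g\|^2$ term. You instead keep the alternating updates of Algorithm~\ref{alg:BiO-Cond} as written, which creates exactly the $y$--$u$ coupling you describe, and you resolve it by substituting the $y$-drift bound and converting distances to gradient norms via strong convexity. Both routes are sound, but they buy different things: yours is faithful to the stated algorithm, at the cost that your $u$-drift bound contains $\|\nabla_y g(x,y_l)\|^2$, so the downstream telescoping in Lemma~\ref{lemma:drift-cond} would have to absorb that term against the negative $\|\nabla_y g\|^2$ term of the $y$-recursion, i.e.\ it forces a jointly coupled $(y,u)$ recursion or potential, as in the unconditional Lemma~\ref{lemma:var-drift}; the paper's sequential assumption sacrifices fidelity to the algorithm (precisely ``to avoid treating complicated higher order terms'') but keeps the $u$-recursion self-contained, so Eq.~\eqref{eq:u-error-cond} telescopes on its own and the $y$-error bound is plugged in afterwards. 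Two minor points: the conversion $\|u_l-u_{x}\|^2\le\tfrac{1}{\mu^2}\|\nabla r_{x}(u_l)\|^2$ you invoke is not actually needed, since the deviation $q_{l'}-\nabla r_{x}(u_l)$ produces only the drift $\|u_{l'}-u_l\|$ and not the distance to $u_{x}$; and your reliance on the projection-enforced bound on $\|u\|$ is indeed the same mechanism (the $C_f/\mu$-type bound) that generates the $C_f^2/\mu^2$ coefficients in the paper's Eq.~\eqref{eq:q-diff-final} and Eq.~\eqref{eq:u-drift-cond}.
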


\begin{proof}
We first bound $\sum_{l^{'}=l}^{l+k-1} \gamma_{l^{'}}\| y_{l^{'}} - y_{l} \|^2$, in fact, we have:
\begin{align*}
    \| y_l  -  y_{l^{'}}\|^2 &= \|\sum_{\bar{l^{'}}=l}^{l^{'} - 1} \gamma_{\bar{l^{'}}}w_{\bar{l^{'}}}\|^2 \leq 
    2\|\sum_{\bar{l^{'}}=l}^{l^{'} - 1} 
    \gamma_{\bar{l^{'}}}\big(\nabla_y g^{\zeta_{\bar{l^{'}}}} (x, 
    y_{\bar{l^{'}}}) - \nabla_y g (x, y_{l}) \big)\|^2 + 2(\bar{\gamma}_{l}^{l^{'}})^2\|\nabla_y g (x, y_{l})\|^2 \nonumber\\
    &\leq 4L^2\bar{\gamma}_{l}^{l^{'}}\sum_{\bar{l^{'}}=l}^{l^{'} - 1} 
    \gamma_{\bar{l^{'}}}\|y_{\bar{l^{'}}} - y_l\|^2 + 2(\bar{\gamma}_{l}^{l^{'}})^2\|\nabla_y g (x, y_{l})\|^2 + 4\|\sum_{\bar{l^{'}}=l}^{l^{'} - 1} 
    \gamma_{\bar{l^{'}}}\big(\nabla_y g^{\zeta_{\bar{l^{'}}}} (x, 
    y_{l}) - \nabla_y g (x, y_{l}) \big)\|^2 \nonumber\\
    &\leq 4L^2\bar{\gamma}_{l}\sum_{\bar{l^{'}}=l}^{l+k - 1} 
    \gamma_{\bar{l^{'}}}\|y_{\bar{l^{'}}} - y_l\|^2 + (\bar{\gamma}_{l})^2\|\nabla_y g (x, y_{l})\|^2 + 4\gamma_l^2k^{2-\alpha}C^2
\end{align*} 
Multiple  $\gamma_{l^{'}}$ on both sides, and then sum $l^{'}$ in $[l,l+k-1]$, we have:
\begin{align*}
    (1 - 4L^2\bar{\gamma}_{l}^2)\sum_{l^{'}=l}^{l+k - 1} 
    \gamma_{l^{'}}\|y_{l^{'}} - y_l\|^2 \leq 2(\bar{\gamma}_{l})^3\|\nabla_y g (x, y_{l})\|^2 + 4\bar{\gamma}_{l}\gamma_l^2k^{2-\alpha}C^2
\end{align*}
Since we have $\bar{\gamma}_{l} < \frac{1}{2L}$, we have:
\begin{align}
\label{eq:y-drift-cond}
    \sum_{l^{'}=l}^{l+k - 1} 
    \gamma_{l^{'}}\|y_{l^{'}} - y_l\|^2 \leq 4(\bar{\gamma}_{l})^3\|\nabla_y g (x, y_{l})\|^2 + 8\bar{\gamma}_{l}\gamma_l^2k^{2-\alpha}C^2
\end{align}
Next for $\sum_{l^{'}=l}^{l+k-1}\rho_{l^{'}}\|u_{l} - u_{l^{'}}\|^2 $, we have:
\begin{align*}
    \|u_l - u_{l^{'}}\|^2 &= \|\sum_{\bar{l^{'}}=t}^{l^{'} - 1} \rho_{\bar{l^{'}}}q_{\bar{l^{'}}}\|^2 \leq 2\|\sum_{\bar{l^{'}}=l}^{l^{'} - 1} \rho_{\bar{l^{'}}}\big(q_{\bar{l^{'}}} - \nabla r_{x}(u_l)\big)\|^2 + 2(\bar{\rho}_{l}^{l^{'}})^2\|\nabla r_{x}(u_l)\|^2 \nonumber\\
    &\leq 12\tilde{L}_2^2(\bar{\rho}_l^{l^{'}})^2\| y_{x}  -  y_{T_{l}} \|^2 + 8L^2\bar{\rho}_l^{l^{'}} \sum_{\bar{l^{'}}=l}^{l^{'}-1}\rho_{\bar{l^{'}}}\|u_l - u_{\bar{l^{'}}}\|^2 + 2(\bar{\rho}_{l}^{l^{'}})^2\|\nabla r_{x}(u_l)\|^2\nonumber\\
    &\qquad + \frac{24 C_f^2}{\mu^2}\|  \sum_{\bar{l^{'}}=l}^{l^{'}-1} \rho_{\bar{l^{'}}}\big( \nabla_{y^2} g(x, y_{T_{l}})  -  \nabla_{y^2} g^{\zeta_{\bar{l^{'}}}}(x, y_{T_{l}})\big) \|^2 \nonumber\\
    &\leq 12\tilde{L}_2^2(\bar{\rho}_l)^2\| y_{x}  -  y_{T_{l}} \|^2 + 8L^2\bar{\rho}_l\sum_{\bar{l^{'}}=l}^{l+k-1}\rho_{\bar{l^{'}}}\|u_l - u_{\bar{l^{'}}}\|^2 + 2(\bar{\rho}_{l})^2\|\nabla r_{x}(u_l)\|^2 + \frac{24 C_f^2}{\mu^2} \rho_l^2k^{2-\alpha}C^2
\end{align*}
Multiple  $\rho_{l^{'}}$ on both sides, and then sum $l^{'}$ in $[l,l+k-1]$, we have:
\begin{align*}
    (1 - 8L^2\bar{\rho}_l^2)\sum_{l^{'}=l}^{l+k-1}\rho_{l^{'}}\|u_l - u_{l^{'}}\|^2 \leq 12\tilde{L}_2^2(\bar{\rho}_l)^3\| y_{x}  -  y_{T_{l}} \|^2  + 2(\bar{\rho}_{l})^3\|\nabla r_{x}(u_l)\|^2 + \frac{24 C_f^2}{\mu^2} \bar{\rho}_{l}\rho_l^2k^{2-\alpha}C^2
\end{align*}
Since we have $\bar{\rho}_{l} < \frac{1}{2L}$, we have:
\begin{align}
\label{eq:u-drift-cond}
    \sum_{l^{'}=l}^{l+k-1}\rho_{l^{'}}\|u_l - u_{l^{'}}\|^2 \leq 24\tilde{L}_2^2(\bar{\rho}_l)^3\| y_{x}  -  y_{T_{l}} \|^2  + 4(\bar{\rho}_{l})^3\|\nabla r_{x}(u_l)\|^2 + \frac{48 C_f^2}{\mu^2} \bar{\rho}_{l}\rho_l^2k^{2-\alpha}C^2
\end{align}
\end{proof}

\begin{lemma}
\label{lemma:drift-cond}
When $\bar{\gamma}_t < \frac{1}{128L\kappa}$ and $\bar{\rho}_{t} < \frac{1}{256L\kappa}$, the error of the inner variable $y_l$ and the variable $u_l$ are bounded through Eq.~\eqref{eq:y-error-cond} and Eq.~\eqref{eq:u-error-cond}.
\end{lemma}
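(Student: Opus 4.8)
The plan is to treat the inner loop (at a fixed outer state $x$ and outer sample $\xi$) as two coupled strongly-convex minimization problems solved by without-replacement SGD, and to propagate a per-block contraction across the $E_l$ blocks that tile the $T_l=E_l k$ inner steps. I would bound the $y$-error first and the $u$-error second, since the $u$-iterate reads off the running $y$-iterate. Throughout, the stated ceilings $\bar\gamma_t<\tfrac{1}{128L\kappa}$ and $\bar\rho_t<\tfrac{1}{256L\kappa}$ comfortably imply the weaker conditions ($\bar\gamma_t<\tfrac{1}{4L}$, $\bar\gamma_l,\bar\rho_l<\tfrac{1}{2L}$) required by the progress inequality and by the drift bounds Eq.~\eqref{eq:y-drift-cond} and Eq.~\eqref{eq:u-drift-cond}, and the slack is what lets the negative gradient-norm terms absorb the positive drift/coupling contributions.

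For $y$: since $g(x,\cdot)$ is $\mu$-strongly convex and $L$-smooth, I would invoke the aggregated strong-convexity progress inequality (Proposition~\ref{prop:strong-prog}, as used in Lemma~\ref{lemma:y-diff}), which here reads
\[
\|y_{l+k}-y_x\|^2 \le \big(1-\tfrac{\mu\bar\gamma_l}{2}\big)\|y_l-y_x\|^2 - \tfrac{\bar\gamma_l^2}{2}\|\nabla_y g(x,y_l)\|^2 + \tfrac{4\bar\gamma_l}{\mu}\|\nabla_y g(x,y_l)-\bar w_l\|^2 .
\]
Expanding the last term and substituting the within-block drift Eq.~\eqref{eq:y-drift-cond} together with Assumption~\ref{assumption:ave-grad-err} collapses the right-hand side to $(1-\tfrac{\mu\bar\gamma_l}{2})\|y_l-y_x\|^2$ plus a residual of order $k^{-\alpha}C^2$, the step-size ceiling being exactly what lets the negative $\|\nabla_y g(x,y_l)\|^2$ dominate the positive copy coming from the drift. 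Iterating this one-block recursion over the $E_l$ blocks and summing the geometric residual yields the target Eq.~\eqref{eq:y-error-cond}, of the shape $\|y_{T_l}-y_x\|^2 \le (1-\tfrac{k\mu\gamma}{2})^{E_l}\Delta_y + O(k^{-\alpha}C^2)$.

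For $u$: the inner recursion is gradient descent on the $\mu$-strongly convex, $L$-smooth surrogate $r_x(u)=\tfrac12 u^\top\nabla_{y^2}g(x,y_x)u-\nabla_y f(x,y_x)^\top u$, except the search direction $q_{l'}$ is evaluated at the running $y_{l'}$ rather than at $y_x$. I would apply the same progress inequality to $u$, then control $\|\nabla r_x(u_l)-\bar q_l\|^2$ via the drift bound Eq.~\eqref{eq:u-drift-cond} and Assumption~\ref{assumption:ave-grad-err}. The essential difference from the $y$ case is that Eq.~\eqref{eq:u-drift-cond} carries an extra coupling term proportional to the $y$-error, so the one-block recursion takes the form $\|u_{l+k}-u_x\|^2 \le (1-\tfrac{\mu\bar\rho_l}{2})\|u_l-u_x\|^2 + c_{uy}\,\|y_{T_l}-y_x\|^2 + O(k^{-\alpha}C^2)$. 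Iterating over the $E_l$ blocks and feeding in the $y$-bound from the previous paragraph produces Eq.~\eqref{eq:u-error-cond}, whose leading terms are $(1-\tfrac{k\mu\rho}{2})^{E_l}\Delta_u$ from the $u$ initialization and $(1-\tfrac{k\mu\gamma}{2})^{E_l}\Delta_y$ inherited through the coupling, plus an $O(k^{-\alpha}C^2)$ residual; these are precisely the two contracting quantities that Lemma~\ref{lemma:desent-cond} subsequently multiplies by $12L^2$ and by $6\tilde L_1^2+\dots$.

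The main obstacle is the $y$--$u$ coupling across blocks. If the coupling were taken against the \emph{running} $y$-error, iterating would generate a convolution $\sum_{j}(1-\tfrac{k\mu\rho}{2})^{E_l-1-j}(1-\tfrac{k\mu\gamma}{2})^{j}=\tfrac{\lambda_\rho^{E_l}-\lambda_\gamma^{E_l}}{\lambda_\rho-\lambda_\gamma}$ of the two geometric sequences, which is governed by the \emph{slower} ($\rho$) rate and cannot be collapsed into a clean $\lambda_\gamma^{E_l}$ factor. Getting the sharper $(1-\tfrac{k\mu\gamma}{2})^{E_l}\Delta_y$ that appears in the statement requires using the already-contracted $y$-error as the coupling quantity (the $\|y_{T_l}-y_x\|^2$ appearing in Eq.~\eqref{eq:u-drift-cond}), which is legitimate precisely because the analysis forces $\rho$ strictly below $\gamma$ via the gap between $\bar\rho_t<\tfrac{1}{256L\kappa}$ and $\bar\gamma_t<\tfrac{1}{128L\kappa}$, so the $y$-block contracts faster than the $u$-block. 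The remaining work is bookkeeping: checking that the several step-size ceilings are mutually consistent so that every gradient-norm coefficient stays nonnegative after the absorptions, which is routine but must be tracked to keep the constants $C_u,C_y$ well defined.
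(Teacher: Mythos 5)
Your overall scaffolding --- block-level application of Proposition~\ref{prop:strong-prog}, absorption of the within-block drift via Eq.~\eqref{eq:y-drift-cond} and Eq.~\eqref{eq:u-drift-cond}, and telescoping over the $E_l$ blocks --- is exactly the paper's argument, and your treatment of the $y$-recursion is correct. The genuine gap is in how you justify the coupling term. You correctly observe that obtaining the clean $(1-\frac{k\mu\gamma}{2})^{E_l}\Delta_y$ factor requires the $u$-recursion to couple to the \emph{final} error $\|y_{T_l}-y_x\|^2$ rather than to the running error, but your justification --- that this is ``legitimate'' because the gap between $\bar\rho_t<\frac{1}{256L\kappa}$ and $\bar\gamma_t<\frac{1}{128L\kappa}$ makes the $y$-block contract faster than the $u$-block --- does not work. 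If the direction $q_{l'}$ is evaluated at the running iterate $y_{l'}$ (as you state, and as Algorithm~\ref{alg:BiO-Cond} literally does with its alternating updates), then the progress inequality produces the term $\|y_x-y_{l'}\|^2$ for each $l'$ in the block, and since the running error dominates the final error, replacing it by $\|y_x-y_{T_l}\|^2$ is an inequality in the wrong direction; no ordering of the step sizes repairs this. You would be stuck with precisely the convolution $\sum_{j}\lambda_\rho^{E_l-1-j}\lambda_\gamma^{j}$ you describe, which contracts only at the slower $\rho$-rate and costs an extra $O(\kappa^2)$ factor.

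The paper's resolution is structural, not step-size based: its proof of this lemma explicitly assumes that the $y$-updates and $u$-updates are performed \emph{separately} --- the $y$-loop is run to completion first, producing $y_{T_l}$, and only then is the $u$-loop run with $y$ frozen at $y_{T_l}$. Under that restructuring, every evaluation of $q_{l'}$ genuinely uses $y_{T_l}$ (this is visible in Eq.~\eqref{eq:u-drift-cond}, where $\|y_x - y_{T_l}\|^2$ appears precisely because $\nabla_{y^2}g$ and $\nabla_y f$ are evaluated at $y_{T_l}$), so the coupling term $\frac{48\tilde L_2^2\bar\rho_l}{\mu}\|y_x-y_{T_l}\|^2$ enters as a constant forcing term across all $u$-blocks, and telescoping gives the $\frac{96\tilde L_2^2}{\mu^2}\|y_x-y_{T_l}\|^2$ term of Eq.~\eqref{eq:u-error-cond} by a plain geometric sum, with no convolution of two rates at all. (Note also that the paper keeps $\|y_x-y_{T_l}\|^2$ unsubstituted in Eq.~\eqref{eq:u-error-cond}; the product with Eq.~\eqref{eq:y-error-cond} is taken later, in Lemma~\ref{lemma:desent-cond}.) To complete your proof you must either import this sequential-update assumption explicitly, or carry out the convolution analysis for the alternating updates and accept the weaker constants it yields; your current argument does neither.
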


\begin{proof}
Since the outer iteration $t$ is fixed for each inner loop, we omit $t$ and $\xi_t$ in the notation for clarity. Furthermore, we assume performing the updates to $y$ and $u$ separately in Algorithm~\ref{alg:BiO-Cond}. Although the alternative updates in Algorithm~\ref{alg:BiO-Cond} is appealing in practice as we can reuse $\nabla_y g$ for $\nabla_{y^2} g$, update $y$ first and $u$ next can avoid treating complicated higher order terms. Follow similar derivation of Lemma~\ref{lemma:y-diff}, if $\bar{\gamma}_t < \frac{1}{4L}$, we have: 
\begin{align*}
\|y_{l+k}-y_{x}\|^2 &\leq ( 1-\frac{\mu\bar{\gamma}_l}{2})\|y_{l} - y_{x}\|^2 - \frac{\bar{\gamma}_l^2}{2} \|\nabla_y g(x,y_l)\|^2\nonumber\\
&\qquad + 8L\kappa\sum_{l^{'}=l}^{l+k-1} \gamma_{l^{'}}\| y_{l^{'}} - y_{l} \|^2 + \frac{8}{\mu\bar{\gamma}_l}\|\sum_{l^{'}=l}^{l+k - 1} \gamma_{l^{'}}\big(\nabla_y g (x, y_{l})  - \nabla_y g (x, y_{l}; \zeta_{l^{'}})\big)\|^2
\end{align*}
Combine with Eq.~\eqref{eq:y-drift-cond} and use the condition that $\bar{\gamma}_l < \frac{1}{128L\kappa} < \frac{1}{4L}$, we have:
\begin{align*}
\|y_{l+k}-y_{x}\|^2 &\leq ( 1-\frac{\mu\bar{\gamma}_l}{2})\|y_{l} - y_{x}\|^2 - \frac{\bar{\gamma}_l^2}{4}\|\nabla_y g(x,y_l)\|^2 +64L\kappa\bar{\gamma}_{l}\gamma_l^2k^{2-\alpha}C^2+ \frac{8}{\mu\bar{\gamma}_l}\gamma_l^2k^{2-\alpha}C^2
\end{align*}
Suppose we perform $E_{l}$ rounds, then by telescoping, we have:
\begin{align}
\label{eq:y-error-cond}
\|y_{T_{l}} - y_{x} \|^2  &\leq ( 1-\frac{k\mu\gamma}{2})^{E_{l}}\|y_{0} - y_{x}\|^2  + \frac{24k^{-\alpha}C^2}{\mu^2}+ \sum_{e=0}^{E_{l}-1}( 1-\frac{k\mu\gamma}{2})^{E_{l} -e}\bigg(- \frac{k^2\gamma^2}{4} \|\nabla_y g(x,y_{ek})\|^2\bigg)
\end{align}
Next, follow similar derivations as in Lemma~\ref{lemma:u-diff} and  choose $\bar{\rho}_{t} < \frac{1}{4L}$, then we have:
\begin{align*}
\|u_{l+k}-u_{x}\|^2 & \leq ( 1-\frac{\mu\bar{\rho}_l}{2})\|u_{l} - u_{x}\|^2 - \frac{(\bar{\rho}_l)^2}{2}\|\nabla r_{x}(u_l)\|^2 + \frac{24\tilde{L}_2^2(\bar{\rho}_t)}{\mu}\| y_{x}  -  y_{T_{l}} \|^2 \nonumber\\
&\qquad + 16L\kappa\sum_{l^{'}=l}^{l+k-1}\rho_{l^{'}}\|u_{l} - u_{l^{'}}\|^2  + \frac{48 C_f^2}{\mu^3\bar{\rho}_l}\|  \sum_{l^{'}=l}^{l+k-1} \rho_{l^{'}}\big( \nabla_{y^2} g(x, y_{T_{l}})  -  \nabla_{y^2} g(x, y_{T_{l}}; \zeta_{l^{'}})\big) \|^2
\end{align*}
Combine with Eq.~\eqref{eq:u-drift-cond} and use the condition that $\bar{\rho}_l < \frac{1}{256L\kappa} < \frac{1}{4L}$, we have:
\begin{align*}
\|u_{l+k}-u_{x}\|^2 & \leq ( 1-\frac{\mu\bar{\rho}_l}{2})\|u_{l} - u_{x}\|^2 - \frac{(\bar{\rho}_l)^2}{4}\|\nabla r_{x}(u_l)\|^2 + \frac{48\tilde{L}_2^2(\bar{\rho}_l)}{\mu}\| y_{x}  -  y_{T_{l}} \|^2 + \frac{96 C_f^2}{\mu^3\bar{\rho}_l}\rho_l^2k^{2-\alpha}C^2
\end{align*}
we perform $E_{l}$ rounds, then by telescoping, we have:
\begin{align}
\label{eq:u-error-cond}
\|u_{T_{l}} - u_{x} \|^2  &\leq ( 1-\frac{k\mu\rho}{2})^{E_{l}}\|u_{0} - u_{x}\|^2 + \frac{192C_f^2C^2k^{-\alpha}}{\mu^4} + \frac{96\tilde{L}_2^2}{\mu^2}\| y_{x}  -  y_{T_{l}} \|^2\nonumber\\
&\qquad+ \sum_{e=0}^{E_{l}-1}( 1-\frac{k\mu\rho}{2})^{E_{l} -e}\bigg(- \frac{k^2\rho^2}{4} \|\nabla_y r_x(u_{ek})\|^2\bigg)
\end{align}
This completes the proof.
\end{proof}

\begin{theorem}
\label{theorem:BO-cond-app}
Suppose Assumptions~\ref{assumption:f_smoothness}-\ref{assumption:bounded_diff2} and ~\ref{assumption:bounded_diff3} are satisfied, and Assumption~\ref{assumption:ave-grad-err} is satisfied with some $\alpha$ and $C$ for the example order we use in Algorithm~\ref{alg:BiO-Cond}. We choose learning rates $\eta_t = \eta = \frac{1}{8k\bar{L}}$, $\gamma_t = \gamma = \frac{1}{256kL\kappa}$, $\rho_t =  \rho = \frac{1}{512kL\kappa}$ and denote $T = R\times m$ be the total number of outer steps and $T_l = S \times \max(n_i)$ be the maximum inner steps. Then for any pair of values (E, k) which has $T = E\times k$ and $T_l = E_l \times k$, we have:
\begin{align*}
     \frac{1}{E} \sum_{e = 0}^{E-1} \|\nabla h(x_{ke})\|^2 &\leq \frac{8\Delta_h}{kE\eta} + C_u( 1-\frac{k\mu\rho}{2})^{E_{l}}\Delta_u + C_y( 1-\frac{k\mu\gamma}{2})^{E_{l}}\Delta_y + (C^{'})^2C^2k^{-\alpha}
\end{align*}
where $C_u$, $C_y$, $C^{'}$, $\tilde{L}$ are constants related to the smoothness parameters of $h(x)$, $\Delta_h$, $\Delta_u$, $\Delta_y$ are the initial sub-optimality, $R$ and $S$ are defined in Algorithm~\ref{alg:BiO-Cond}.
\end{theorem}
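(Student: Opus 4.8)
The plan is to derive the stated rate by telescoping the per-block descent inequality of Lemma~\ref{lemma:desent-cond} over the $E$ outer phases. The key structural simplification in the conditional setting is that the inner estimates $\hat{y}_t,\hat{u}_t$ are recomputed from scratch at every outer step rather than carried across iterations, so the potential function collapses to $\mathcal{G}_t = h(x_t)$; there are no $\|y_t-y_{x_t}\|^2$ or $\|u_t-u_{x_t}\|^2$ correction terms to propagate in the outer recursion. All the effect of inner inexactness has already been packaged, inside Lemma~\ref{lemma:desent-cond}, into the two geometrically decaying terms $(1-\tfrac{k\mu\rho}{2})^{E_l}\Delta_u$ and $(1-\tfrac{k\mu\gamma}{2})^{E_l}\Delta_y$. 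Consequently the outer analysis reduces to a single-level-style telescoping of a clean descent bound.

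First I would check that the prescribed learning rates satisfy every block-size hypothesis invoked by Lemma~\ref{lemma:desent-cond} and Lemma~\ref{lemma:drift-cond}. With $\eta=\tfrac{1}{8k\bar{L}}$ we get the aggregated step $\bar{\eta}_t=k\eta=\tfrac{1}{8\bar{L}}<\tfrac{1}{4\bar{L}}$; with $\gamma=\tfrac{1}{256kL\kappa}$ we get $\bar{\gamma}_t=\tfrac{1}{256L\kappa}<\tfrac{1}{128L\kappa}$; and with $\rho=\tfrac{1}{512kL\kappa}$ we get $\bar{\rho}_t=\tfrac{1}{512L\kappa}<\tfrac{1}{256L\kappa}$, so all conditions hold. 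Next I would instantiate the descent inequality at $t=ke$ for $e=0,\dots,E-1$ and sum. The left-hand side telescopes so that $h(x_{kE})-h(x_0)\ge-\Delta_h$ with $\Delta_h=h(x_0)-h^*$; the $-\tfrac{k\eta}{4}\|\bar{\nu}_{ke}\|^2$ contributions are nonnegative and dropped; and each of the three error contributions is multiplied by the number of phases $E$. Rearranging yields
\begin{align*}
\frac{k\eta}{8}\sum_{e=0}^{E-1}\|\nabla h(x_{ke})\|^2 \;\le\; \Delta_h + E\Big(A_1\,\eta k^{1-\alpha}C^2 + A_2\,\eta k(1-\tfrac{k\mu\rho}{2})^{E_l}\Delta_u + A_3\,\eta k(1-\tfrac{k\mu\gamma}{2})^{E_l}\Delta_y\Big),
\end{align*}
where $A_1,A_2=12L^2,A_3$ are the bracketed smoothness constants read off from Lemma~\ref{lemma:desent-cond}.

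Finally I would divide through by $\tfrac{kE\eta}{8}$. The optimality term becomes $\tfrac{8\Delta_h}{kE\eta}$; the average-gradient-error term loses a factor $k\eta$ to leave $(C')^2C^2k^{-\alpha}$ with $(C')^2=8A_1$; and the two inner-error terms shed their $\eta k$ factors to leave $C_u(1-\tfrac{k\mu\rho}{2})^{E_l}\Delta_u$ and $C_y(1-\tfrac{k\mu\gamma}{2})^{E_l}\Delta_y$ with $C_u=96L^2$ and $C_y=8A_3$, which is exactly the claimed bound. The genuine difficulty of this whole development is entirely upstream, in Lemma~\ref{lemma:desent-cond} itself: one must control the coupling between the outer drift $\|x_t-x_\tau\|^2$ and the per-sample inner errors $\|y_{x_\tau}^{\xi_\tau}-\hat{y}_\tau\|^2,\|u_{x_\tau}^{\xi_\tau}-\hat{u}_\tau\|^2$ via a self-bounding inequality that is closed using $\bar{\eta}_t<\tfrac{1}{4\bar{L}}$, and then invoke Lemma~\ref{lemma:drift-cond} to show the inner double loop contracts those errors geometrically in $E_l$. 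Once that lemma is in hand, the theorem is a routine telescoping and rescaling, so I would present it exactly as above.
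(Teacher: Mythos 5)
Your proposal is correct and follows essentially the same route as the paper's proof: both treat Lemma~\ref{lemma:desent-cond} (with the inner contraction supplied by Lemma~\ref{lemma:drift-cond}) as the key ingredient, telescope it over the $E$ phases while dropping the nonnegative $\|\bar{\nu}_t\|^2$ terms, and divide by $kE\eta/8$ to read off $C_u = 96L^2$, $C_y$, and $(C')^2$ exactly as the paper does. Your explicit verification that the prescribed learning rates satisfy $\bar{\eta}_t < \tfrac{1}{4\bar{L}}$, $\bar{\gamma}_t < \tfrac{1}{128L\kappa}$, $\bar{\rho}_t < \tfrac{1}{256L\kappa}$ is a point the paper handles only implicitly, but it does not change the argument.
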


\begin{proof}
First, by Lemma~\ref{lemma:desent-cond}, we have:
    \begin{align*}
    h(x_{t+k}) &\leq h(x_{t}) - \frac{k\eta}{8} \|\nabla h(x_{t})\|^2  - \frac{k\eta}{4} \| \bar{\nu}_t \|^2 + 3\big(1 + \frac{4\kappa^2(192C_f^2+ 96*24\tilde{L}_2^2)}{\mu^2} + \frac{48\tilde{L}_1^2}{\mu^2}\big)\eta k^{1-\alpha}C^2\nonumber\\
    &\qquad +12\eta kL^2( 1-\frac{k\mu\rho}{2})^{E_{l}}\Delta_u + 3\eta k(2\tilde{L}_1^2+4*96\kappa^2\tilde{L}_2^2)( 1-\frac{k\mu\gamma}{2})^{E_{l}}\Delta_y
\end{align*}
Sum the above inequality over $E$ phases to have:
    \begin{align*}
     \frac{k\eta}{8} \sum_{e = 0}^{E-1} \|\nabla h(x_{ke})\|^2 &\leq \Delta_h +12\eta kEL^2( 1-\frac{k\mu\rho}{2})^{E_{l}}\Delta_u + 3\eta kE(2\tilde{L}_1^2+4*96\kappa^2\tilde{L}_2^2)( 1-\frac{k\mu\gamma}{2})^{E_{l}}\Delta_y \nonumber\\
    &\qquad + 3E\big(1 + \frac{4\kappa^2(192C_f^2+ 96*24\tilde{L}_2^2)}{\mu^2} + \frac{48\tilde{L}_1^2}{\mu^2}\big)\eta k^{1-\alpha}C^2
\end{align*}
Divide by $kE\eta/8$ on both sides to have:
    \begin{align*}
     \frac{1}{E} \sum_{e = 0}^{E-1} \|\nabla h(x_{ke})\|^2 &\leq \frac{8\Delta_h}{kE\eta} +96L^2( 1-\frac{k\mu\rho}{2})^{E_{l}}\Delta_u + 24(2\tilde{L}_1^2+4*96\kappa^2\tilde{L}_2^2)( 1-\frac{k\mu\gamma}{2})^{E_{l}}\Delta_y \nonumber\\
    &\qquad + 24\big(1 + \frac{4\kappa^2(192C_f^2+ 96*24\tilde{L}_2^2)}{\mu^2} + \frac{48\tilde{L}_1^2}{\mu^2}\big)k^{-\alpha}C^2
\end{align*}
By the condition $\eta < \frac{1}{4k\bar{L}}$, $\gamma < \frac{1}{128kL\kappa}$ and $\rho < \frac{1}{256kL\kappa}$, we choose $\eta = \frac{1}{8k\bar{L}}$, $\gamma = \frac{1}{256kL\kappa}$ and $\rho = \frac{1}{512kL\kappa}$, then we have:
    \begin{align*}
     \frac{1}{E} \sum_{e = 0}^{E-1} \|\nabla h(x_{ke})\|^2 &\leq \frac{64\bar{L}\Delta_h}{E} +96L^2( 1-\frac{k\mu\rho}{2})^{E_{l}}\Delta_u + 24(2\tilde{L}_1^2+4*96\kappa^2\tilde{L}_2^2)( 1-\frac{k\mu\gamma}{2})^{E_{l}}\Delta_y \nonumber\\
    &\qquad + 24\big(1 + \frac{4\kappa^2(192C_f^2+ 96*24\tilde{L}_2^2)}{\mu^2} + \frac{48\tilde{L}_1^2}{\mu^2}\big)k^{-\alpha}C^2
    \end{align*}
Then to reach an $\epsilon$-stationary point, we need to have:
\[
E \geq \frac{128\bar{L}\Delta}{\epsilon^2}, \text{ and }, k^{\alpha} \geq \frac{48C^2}{\epsilon^2}\bigg(1 + \frac{4\kappa^2(192C_f^2+ 96*24\tilde{L}_2^2)}{\mu^2} + \frac{48\tilde{L}_1^2}{\mu^2}\bigg)
\]
and $E_l = O(log(\epsilon^{-1}))$. This completes the proof.
\end{proof}

% In this section, we state some propositions useful in the proof:
% \begin{proposition}[Lemma 3 of~\cite{karimireddy2020scaffold}] (generalized triangle inequality)
% \label{prop:generali_tri}
% Let $\{x_k\}, k\in{K}$ be $K$ vectors. Then the following are true:
% \begin{enumerate}
% 	\item $||x_i + x_j||^2 \le (1 + a)||x_i||^2 + (1 + \frac{1}{a})||x_j||^2$ for any $a > 0$, and
% 	\item  $||\sum_{k=1}^K x_k||^2 \le K\sum_{k=1}^{K} ||x_k||^2$
% \end{enumerate}
% \end{proposition}

\begin{proposition}\label{prop:sample-bias}
Given Assumptions~\ref{assumption:bounded_diff}-\ref{assumption:bounded_diff2}, Assumption~\ref{assumption:bounded_diff3} and Assumption~\ref{assumption:ave-grad-err} hold, then we have: 
$\| \sum_{{\bar{\tau}}=t}^{{\tau}-1} \eta_{{\bar{\tau}}}\big(\nabla_{y} g(x_t, y_{x_t}) -  \nabla_{y}g(x_{{\bar{\tau}}}, y_{{\bar{\tau}}}; \zeta_{{\bar{\tau}},1})\big)\|^2 \leq \eta^2k^{2 -\alpha}C^2$ for any $\tau \leq t +k-1$. The same upper bound is achieved for other properties: $\nabla h(x)$, $\nabla_x f(x,y)$,$\nabla_{y} g(x,y)$, $\nabla_{y^2} g(x,y)$, $\nabla_{xy} g(x,y)$
\end{proposition}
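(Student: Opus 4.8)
The plan is to recognize the displayed quantity as the partial-sum form of Assumption~\ref{assumption:ave-grad-err}, which controls exactly this kind of averaged sample bias, and to be careful about the evaluation point, which is the genuinely subtle part. The core computation is a one-liner once the sample gradient and the full gradient are read off at a \emph{common} reference point. Writing $k' \coloneqq \tau - t \le k-1$ and using the constant step size $\eta_{\bar\tau} = \eta$, I would factor the squared norm of a sum of $k'$ terms as $(k')^2$ times the squared norm of their average, and then apply Assumption~\ref{assumption:ave-grad-err} with interval length $k'$ (whose sample range $[t, t+k'-1]$ is exactly $[t,\tau-1]$):
\begin{align*}
\Big\| \sum_{\bar\tau=t}^{\tau-1} \eta\big(\nabla_y g(x_t,y_t) - \nabla_y g(x_t,y_t;\zeta_{\bar\tau,1})\big)\Big\|^2
&= \eta^2 (k')^2 \Big\| \tfrac{1}{k'}\sum_{\bar\tau=t}^{\tau-1}\big(\nabla_y g(x_t,y_t;\zeta_{\bar\tau,1}) - \nabla_y g(x_t,y_t)\big)\Big\|^2 \\
&\le \eta^2 (k')^2 (k')^{-\alpha} C^2 = \eta^2 (k')^{2-\alpha} C^2 \le \eta^2 k^{2-\alpha} C^2,
\end{align*}
where the last step uses $k' \le k$ together with $\alpha \in [1,2]$, so that $2-\alpha \ge 0$ and $(k')^{2-\alpha}$ is nondecreasing in $k'$. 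Each of the remaining five properties ($\nabla h$, $\nabla_x f$, $\nabla_y g$, $\nabla_{y^2} g$, $\nabla_{xy} g$) is handled identically, since each is a separate instance of Assumption~\ref{assumption:ave-grad-err} (the bound for $\nabla h$ being available only in the conditional setting).

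I would then be explicit about the evaluation point, because the object displayed in the proposition is \emph{not} of the frozen-reference form just bounded. As written, the difference pairs the sample gradient at the \emph{moving} iterate $(x_{\bar\tau}, y_{\bar\tau})$ with the full gradient at the frozen inner minimizer $(x_t, y_{x_t})$. Since $y_{x_t} = \arg\min_y g(x_t,y)$, the leading term $\nabla_y g(x_t, y_{x_t})$ in fact vanishes, so the exact difference cannot by itself obey the clean $O(\eta^2 k^{2-\alpha} C^2)$ estimate---the unaveraged sum of moving-iterate gradients is simply not small. The resolution is that these evaluation-point discrepancies are never absorbed by this proposition: they are peeled off upstream. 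In Eq.~\eqref{eq:nu-diff-final}, Eq.~\eqref{eq:q-diff-final} and Eq.~\eqref{eq:w-diff-final} the $L$-, $L_{xy}$- and $L_{y^2}$-smoothness of the component functions is used to split each moving-iterate/minimizer difference into (i) Lipschitz drift terms $\|z_t - z_{\bar\tau}\|^2$ and $\|y_{x_t}-y_t\|^2$, and (ii) a residual in which both the sample gradient and the full gradient sit at the \emph{same} frozen point $(x_t,y_t)$. This proposition applies only to residual~(ii).

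Accordingly, I would state and prove the proposition for the frozen-reference sample bias---both gradients evaluated at the common point $(x_t,y_t)$---which is exactly the form in which it is invoked inside Lemma~\ref{lemma:var-drift} (and likewise in Lemmas~\ref{lemma:desent}, \ref{lemma:y-diff}, \ref{lemma:u-diff}, where every term reached by this estimate is already of the common-point type). The main obstacle is therefore not the algebra, which is immediate, but pinning down the correct evaluation point in the statement: the $k^{2-\alpha}$ acceleration is a property of the \emph{permutation-induced ordering} of samples read at a fixed reference, and it can be claimed only after the iterate drift and the $y_t \to y_{x_t}$ gap have been stripped away by smoothness. Once that separation is in place, the displayed estimate follows from Assumption~\ref{assumption:ave-grad-err} exactly as in the chain above, and the same one-line argument delivers the analogous bounds for all listed first- and second-order quantities.
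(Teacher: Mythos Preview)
Your proposal is correct and, in fact, more careful than the paper's own treatment: the paper's proof consists only of the sentence ``This proposition can be adapted from Lemma~2 of~\cite{lu2022general},'' whereas you spell out the one-line derivation from Assumption~\ref{assumption:ave-grad-err} and, more importantly, identify a genuine imprecision in the statement. Your observation that the displayed quantity pairs the sample gradient at the moving iterate $(x_{\bar\tau},y_{\bar\tau})$ with the full gradient at $(x_t,y_{x_t})$---so that, since $\nabla_y g(x_t,y_{x_t})=0$, the bound as literally written cannot hold---is accurate, and your resolution (that the proposition is only ever invoked on the frozen-reference residuals appearing in Eqs.~\eqref{eq:nu-diff-final}, \eqref{eq:q-diff-final}, \eqref{eq:w-diff-final}, after smoothness has peeled off the drift) matches exactly how the paper uses it in Lemma~\ref{lemma:var-drift}.
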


\begin{proof}
    This proposition can be adapted from Lemma~2 of~\citep{lu2022general}.
\end{proof}

\begin{proposition}\label{prop:strong-prog}
Suppose we have function $g(y)$, which is L-smooth and $\mu$-strongly-convex, then suppose $\gamma < \frac{1}{2L}$, the progress made by one step of gradient descent is:
\begin{align*}
\|y_{t+1}-y^*\|^2 & \leq \big(1 - \frac{\mu\gamma}{2}\big)\|y_t - y^*\|^2 - \frac{\gamma^2}{2}\|\nabla_y g(y_t)\|^2 +  \frac{4\gamma}{\mu} \|\nabla_y g(y_t)-w_t\|^2
\end{align*}
where $y^*$ is the minimum of $g(y)$ and we have update rule $y_{t+1} = y_{t} - \gamma\omega_t$.
\end{proposition}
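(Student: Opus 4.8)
The plan is to run a standard one-step contraction argument for inexact gradient descent, tracking the iterate distance $\|y_{t+1}-y^*\|^2$ directly rather than function values. Writing $e_t := w_t - \nabla_y g(y_t)$ for the gradient error and $\tilde{y}_{t+1} := y_t - \gamma\nabla_y g(y_t)$ for the exact gradient step, I would first split off the exact step by expanding
\[
\|y_{t+1}-y^*\|^2 = \|\tilde{y}_{t+1}-y^*\|^2 - 2\gamma\langle e_t, y_t-y^*\rangle + 2\gamma^2\langle e_t, \nabla_y g(y_t)\rangle + \gamma^2\|e_t\|^2,
\]
which isolates the deterministic-step term from three error terms depending on $e_t$.

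For the exact step I would combine $\mu$-strong convexity, which gives $\langle\nabla_y g(y_t), y_t-y^*\rangle \ge \frac{\mu}{2}\|y_t-y^*\|^2 + \big(g(y_t)-g(y^*)\big)$, with the smoothness inequality $g(y_t)-g(y^*)\ge\frac{1}{2L}\|\nabla_y g(y_t)\|^2$ (valid since $y^*$ minimizes the $L$-smooth $g$). Substituting these into the expansion of $\|\tilde{y}_{t+1}-y^*\|^2$ yields
\[
\|\tilde{y}_{t+1}-y^*\|^2 \le (1-\mu\gamma)\|y_t-y^*\|^2 - \gamma\big(\tfrac{1}{L}-\gamma\big)\|\nabla_y g(y_t)\|^2.
\]
I would then dispatch the error terms with Young's inequality: bounding $-2\gamma\langle e_t, y_t-y^*\rangle \le \frac{\mu\gamma}{2}\|y_t-y^*\|^2 + \frac{2\gamma}{\mu}\|e_t\|^2$ converts the $(1-\mu\gamma)$ factor into the desired $(1-\frac{\mu\gamma}{2})$, and bounding $2\gamma^2\langle e_t, \nabla_y g(y_t)\rangle \le \frac{\gamma^2}{2}\|\nabla_y g(y_t)\|^2 + 2\gamma^2\|e_t\|^2$ spends only half of the available negative gradient budget. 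Collecting the coefficients and using $\gamma<\frac{1}{2L}$ (equivalently $\gamma(\frac{1}{L}-\gamma)\ge\gamma^2$) leaves a net $-\frac{\gamma^2}{2}\|\nabla_y g(y_t)\|^2$, while $\mu\le L$ forces $3\gamma^2\le\frac{2\gamma}{\mu}$, so the total error coefficient $\frac{2\gamma}{\mu}+3\gamma^2$ is absorbed into $\frac{4\gamma}{\mu}\|e_t\|^2$, completing the bound.

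\textbf{Main obstacle.} The delicate point is obtaining the correct constant $\frac{1}{2}$ on the $-\gamma^2\|\nabla_y g(y_t)\|^2$ term under the mild threshold $\gamma<\frac{1}{2L}$. The temptation is to bound $\|w_t\|^2 \le 2\|\nabla_y g(y_t)\|^2 + 2\|e_t\|^2$, but that crude split inflates the positive gradient contribution and would require a smaller step such as $\gamma\lesssim\frac{2}{5L}$. The fix is to keep the cross term $2\gamma^2\langle e_t,\nabla_y g(y_t)\rangle$ intact and allocate exactly half of the negative gradient budget produced by the exact step to neutralize it; this accounting is precisely what makes $\gamma<\frac{1}{2L}$ both necessary and sufficient for the stated inequality.
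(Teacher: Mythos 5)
Your proof is correct, and it takes a genuinely different route from the paper's. The paper does not isolate the exact gradient step: it applies $\mu$-strong convexity at $y_t$ with the inner product split through $y_{t+1}$, applies $L$-smoothness along the actual (inexact) step from $y_t$ to $y_{t+1}$, cancels the function values using optimality of $y^*$, and thereby obtains a lower bound on $\langle w_t, y_t-y^*\rangle$ that is inserted into the expansion of $\|y_{t+1}-y^*\|^2$; what survives is a residual $L\gamma^3\|w_t\|^2$ term, which the paper removes with precisely the crude split $\|w_t\|^2\le 2\|\nabla_y g(y_t)\|^2+2\|\nabla_y g(y_t)-w_t\|^2$ that you flag as the temptation to avoid. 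Because of that split, the paper's own proof invokes $\gamma<\frac{1}{4L}$ in its final step, which is stricter than the threshold $\gamma<\frac{1}{2L}$ announced in the proposition. Your decomposition $y_{t+1}=\tilde{y}_{t+1}-\gamma e_t$, the exact-step contraction via $g(y_t)-g(y^*)\ge\frac{1}{2L}\|\nabla_y g(y_t)\|^2$ (an inequality the paper never uses), and the decision to keep the cross term $2\gamma^2\langle e_t,\nabla_y g(y_t)\rangle$ intact deliver the stated constants under the stated condition: the checks $-\gamma(\frac{1}{L}-\gamma)+\frac{\gamma^2}{2}\le-\frac{\gamma^2}{2}$ for $\gamma\le\frac{1}{2L}$, and $\frac{2\gamma}{\mu}+3\gamma^2\le\frac{4\gamma}{\mu}$ since $\gamma<\frac{1}{2L}\le\frac{1}{2\mu}<\frac{2}{3\mu}$, are both valid. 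So your argument buys a proof that matches the proposition exactly as stated and repairs a small looseness in the paper's constants; the paper's route is the more standard function-value argument, avoiding any inequality at the minimizer, but it pays for that with the tighter stepsize requirement.
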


\begin{proof}
First, by the strong convexity of of function $g(y)$, we have:
\begin{align} \label{eq:E1}
g(y^*) & \geq g(y_t) + \langle\nabla_y g(y_t), y^*-y_t\rangle + \frac{\mu}{2}\|y^*-y_t\|^2 \nonumber \\
& =  g(y_t) +  \langle\nabla_y g(y_t), y^*- y_{t+1}\rangle + \langle\nabla_y g(y_t), y_{t+1} - y_{t}\rangle+ \frac{\mu}{2}\|y^*-y_t\|^2
\end{align}
Then by $L$-smoothness, we have:
\begin{align} \label{eq:E2}
 \frac{L}{2}\|y_{t+1}-y_t\|^2 \geq g(y_{t+1}) - g(y_{t}) -\langle\nabla_y g(y_t), y_{t+1}-y_t\rangle
\end{align}
Combining the \ref{eq:E1} with \ref{eq:E2}, we have:
\begin{align*}
 g(y^*) & \geq g(y_{t+1}) +  \langle \nabla_y g(y_t), y^*-y_{t}\rangle + \gamma\langle\nabla_y g(y_t), w_t\rangle + \frac{\mu}{2}\|y^*-y_t\|^2 - \frac{L}{2}\|y_{t+1}-y_t\|^2\nonumber\\
 &\geq g(y_{t+1}) +  \frac{\gamma}{2}\| w_t\|^2 + \frac{\gamma}{2}\|\nabla_y g(y_t)\|^2 - \frac{\gamma}{2}\| w_t - \nabla_y g(y_t)\|^2 + \langle w_t, y^*-y_{t}\rangle \nonumber\\
 &\qquad + \langle\nabla_y g(y_t)-w_t, y^*- y_{t}\rangle + \frac{\mu}{2}\|y^*-y_t\|^2 - \frac{L\gamma^2}{2}\|\omega_t\|^2\nonumber\\
 &\geq g(y_{t+1}) +  \big(\frac{\gamma}{2} - \frac{L\gamma^2}{2}\big)\| w_t\|^2 + \frac{\gamma}{2}\|\nabla_y g(y_t)\|^2 - \frac{\gamma}{2}\| w_t - \nabla_y g(y_t)\|^2 + \langle w_t, y^*-y_{t}\rangle \nonumber\\
 &\qquad + \langle\nabla_y g(y_t)-w_t, y^*- y_{t}\rangle + \frac{\mu}{2}\|y^*-y_t\|^2
\end{align*}
By definition of $y^{*}$, we have $g(y^{*}) \geq g(y_{t+1})$. Thus, we obtain
\begin{align} \label{eq:E5}
 0 & \geq \big(\frac{\gamma}{2} - \frac{L\gamma^2}{2}\big)\| w_t\|^2 + \frac{\gamma}{2}\|\nabla_y g(y_t)\|^2 - \frac{\gamma}{2}\| w_t - \nabla_y g(y_t)\|^2 + \langle w_t, y^*-y_{t}\rangle \nonumber\\
 &\qquad + \langle\nabla_y g(y_t)-w_t, y^*- y_{t}\rangle + \frac{\mu}{2}\|y^*-y_t\|^2
\end{align}
Considering the outer bound of the second term $\langle\nabla_y g(y_t)-w_t, y^{*}- y_{t}\rangle$, we have
\begin{align*}
 &-\langle\nabla_y g(y_t)-w_t, y^{*}- y_{t}\rangle \leq \frac{1}{\mu} \|\nabla_y g(y_t)-w_t\|^2 + \frac{\mu}{4}\|y^{*}- y_{t}\|^2
\end{align*}
Combining with Eq.~\ref{eq:E5}:
\begin{align*}
 0 & \geq \big(\frac{\gamma}{2} - \frac{L\gamma^2}{2}\big)\| w_t\|^2 + \frac{\gamma}{2}\|\nabla_y g(y_t)\|^2 - \big(\frac{\gamma}{2} + \frac{1}{\mu}\big)\| w_t - \nabla_y g(y_t)\|^2 + \langle w_t, y^*-y_{t}\rangle \nonumber\\
 &\qquad +  \frac{\mu}{4}\|y^*-y_t\|^2
\end{align*}
By $y_{t+1} = y_t - \gamma\omega_t$, we have:
\begin{align*}
 \|y_{t+1}-y^{*}\|^2 &= \|y_t - \gamma\omega_t -y^{*}\|^2 = \|y_t-y^{*}\|^2 - 2\gamma\langle \omega_t, y_t-y^{*}\rangle + \gamma^2\|\omega_t\|^2\nonumber\\
 &\leq \big(1 - \frac{\mu\gamma}{2}\big)\|y_t - y^*\|^2 - \gamma^2\|\nabla_y g(y_t)\|^2 + L\gamma^3 \|\omega_t\|^2 +  \big(\gamma^2 + \frac{2\gamma}{\mu}\big) \|\nabla_y g(y_t)-w_t\|^2
\end{align*}
Then since we choose $\gamma < \frac{1}{4L} < \frac{1}{\mu}$, we obtain:
\begin{align*}
\|y_{t+1}-y^*\|^2 & \leq \big(1 - \frac{\mu\gamma}{2}\big)\|y_t - y^*\|^2 - \frac{\gamma^2}{2}\|\nabla_y g(y_t)\|^2 +  \frac{4\gamma}{\mu} \|\nabla_y g(y_t)-w_t\|^2
\end{align*}
This completes the proof.
\end{proof}

\begin{corollary}
\label{cor:Minimax}
    For Algorithm~\ref{alg:Comp} and Algorithm~\ref{alg:MiniMax}, suppose Assumptions (the bounded gradient assumption is not required for the minimax problem) and conditions are satisfied as in Theorem~\ref{theorem:BO}, to reach an $\epsilon$-stationary point,  we need $O(\epsilon^{-4})$ examples if they are sampled independently, while $O((\max(m,n))^p\epsilon^{-3})$ if some random-permutation-based without-replacement sampling is used.
\end{corollary}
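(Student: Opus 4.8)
The plan is to exploit the fact, already laid out in the main text, that both the compositional problem~\eqref{eq:comp} and the minimax problem~\eqref{eq:minimax} are genuine \emph{instances} of the finite-sum bilevel problem~\eqref{eq:bi}, so that Algorithm~\ref{alg:Comp} and Algorithm~\ref{alg:MiniMax} are literal specializations of Algorithm~\ref{alg:BiO}. Consequently, rather than redoing the aggregated descent analysis from scratch, I would verify that each special case satisfies Assumptions~\ref{assumption:f_smoothness}--\ref{assumption:bounded_diff2} together with Assumption~\ref{assumption:ave-grad-err}, and then invoke Theorem~\ref{theorem:BO} directly. Once the theorem applies, the complexity bookkeeping is identical to the paragraph following it: independent sampling gives $\alpha=1$, $C=A$ and hence $O(\epsilon^{-4})$, while any permutation-based order gives $\alpha=2$, $C=(\max(m,n))^{p}A$ and hence $O((\max(m,n))^{p}\epsilon^{-3})$.

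For the compositional case I would take $g(x,y)=\tfrac12\|y-r(x)\|^2$ and check the reduction term by term. The inner problem is then $1$-strongly convex and $1$-smooth in $y$ (so $\mu=L=1$ at the inner level), with $\nabla_y g(x,y)=y-r(x)$, $\nabla_{y^2}g=I$, and $\nabla_{xy}g=-\nabla_x r(x)$. Substituting these into~\eqref{eq:update-BO} shows that the descent step~\eqref{eq:update-BO.a} becomes the exponential-moving-average update $y_{i+1}=(1-\gamma_i)y_i+\gamma_i r(x_i;\zeta_i^\pi)$ of Algorithm~\ref{alg:Comp}; that~\eqref{eq:update-BO.b} collapses, because $\nabla_{y^2}g=I$, to $u_{i+1}=(1-\rho_i)u_i+\rho_i\nabla_y f(y_i;\xi_i^\pi)$; and that~\eqref{eq:update-BO.c} reduces to $x_{i+1}=x_i-\eta_i\nabla r(x_i;\zeta_i^\pi)u_i$, since $\nabla_x f\equiv 0$ (the outer objective depends on $x$ only through $r$) and $\nabla_{xy}g=-\nabla_x r(x)$. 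The smoothness constants $\bar L,\hat L,\tilde L$ required by Theorem~\ref{theorem:BO} then follow from the $L$-smoothness of $f$ and $r$ through Proposition~\ref{some smoothness}, so the theorem applies unchanged.

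For the minimax case I would set $g(x,y)=-f(x,y)$, so that $\mu$-strong convexity of $g$ in $y$ is precisely $\mu$-strong concavity of $f$, i.e.\ the nonconvex--strongly-concave regime. The crucial simplification is that optimality forces $\nabla_y f(x,y_x)=0$, whence $u_x=\nabla_{y^2}g(x,y_x)^{-1}\nabla_y f(x,y_x)=0$ and $\nabla h(x)=\nabla_x f(x,y_x)$. I would therefore initialize $u_0=0$ and note that the $u$-update is never needed, so the entire $\phi_u\|u_t-u_{x_t}\|^2$ term is dropped from the potential $\mathcal{G}_t$ and Lemma~\ref{lemma:u-diff} becomes vacuous. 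With $u_t\equiv u_x\equiv 0$, update~\eqref{eq:update-BO.a} becomes the ascent step $y_{i+1}=y_i+\gamma_i\nabla_y f(x_i,y_i;\xi_i^\pi)$ and~\eqref{eq:update-BO.c} becomes $x_{i+1}=x_i-\eta_i\nabla_x f(x_i,y_i;\xi_i^\pi)$, matching Algorithm~\ref{alg:MiniMax}; since $\nabla g=-\nabla f$, the same $\alpha$ and $C$ in Assumption~\ref{assumption:ave-grad-err} transfer immediately.

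The step I expect to be the main obstacle is justifying why the bounded-gradient hypothesis $\|\nabla_y f\|\le C_f$ in Assumption~\ref{assumption:f_smoothness} may be discarded for minimax. Tracing the constants in the proof of Theorem~\ref{theorem:BO-app}, $C_f$ enters only through the quantities $\tilde L_1^2=L^2+2L_{xy}^2C_f^2/\mu^2$, $\tilde L_2^2$, the weight $\phi_u$, and the drift bounds~\eqref{eq:u-drift-final}--\eqref{eq:z-drift-final}, every one of which arises from controlling either $\|u_x\|$ or the $u$-iterate. Because $u_t\equiv u_x\equiv 0$ exactly in the minimax reduction, all of these $C_f$-dependent contributions are identically zero, the $u$-drift lemma and the $\phi_u$ term vanish from $\mathcal{G}_t$, and what remains is a strictly smaller instance of the same inequality chain. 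I would make this precise by reproducing the descent estimate of Lemma~\ref{lemma:desent} and the inner-error estimate of Lemma~\ref{lemma:y-diff} with all $u$-terms deleted, confirming that the final rate of Theorem~\ref{theorem:BO} survives verbatim with $C=A$ (respectively $C=(\max(m,n))^{p}A$), which yields the claimed $O(\epsilon^{-4})$ and $O((\max(m,n))^{p}\epsilon^{-3})$ complexities.
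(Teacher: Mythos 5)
Your proposal is correct and follows the same route as the paper: the paper's own justification is simply that the corollary follows from Theorem~\ref{theorem:BO} by specializing the bilevel problem to the compositional case ($g(x,y)=\tfrac12\|y-r(x)\|^2$) and the minimax case ($g=-f$), with the proof explicitly omitted as straightforward. Your fill-in of the details — checking that Algorithms~\ref{alg:Comp} and~\ref{alg:MiniMax} are literal instantiations of Eq.~\eqref{eq:update-BO}, and in particular observing that for minimax $u_x\equiv 0$ forces every $C_f$-dependent term ($\tilde L_1$, $\tilde L_2$, $\phi_u$, and the $u$-drift bounds) to vanish, which is exactly what licenses dropping the bounded-gradient assumption — is consistent with the paper's analysis and supplies the only genuinely non-trivial step the paper leaves implicit.
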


\begin{corollary}
\label{cor:Comp-cond}
    For Algorithm~\ref{alg:Comp-Cond}, suppose Assumptions and conditions are satisfied as in Theorem~\ref{theorem:BO-cond}, to reach an $\epsilon$-stationary point,  we need $O(\epsilon^{-6})$ examples if they sampled independently, while $O((\max(m,n))^{2p}\epsilon^{-4})$ if examples are sampled following some random-permutation-based without-replacement sampling.
\end{corollary}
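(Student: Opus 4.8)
The plan is to prove Corollary~\ref{cor:Comp-cond} by \emph{reduction} rather than by a fresh analysis: I will show that the conditional compositional problem Eq.~\eqref{eq:comp-cond} is a concrete instance of the conditional bilevel problem Eq.~\eqref{eq:bi-cond}, that Algorithm~\ref{alg:Comp-Cond} is exactly the specialization of Algorithm~\ref{alg:BiO-Cond} to this instance, and that every hypothesis of Theorem~\ref{theorem:BO-cond} is inherited. The two claimed complexities then drop out by substituting the appropriate $(\alpha, C)$ pairs into the sample-complexity expression $EE_lk^2 = O(C^{4/\alpha}\epsilon^{-(2+4/\alpha)})$ established there.

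First I would make the reduction explicit. Choosing the inner sample loss $g(x,y;\zeta)=\tfrac{1}{2}\|y-r(x;\zeta)\|^2$ gives $\nabla_y g(x,y;\zeta)=y-r(x;\zeta)$ and the constant Hessian $\nabla_{y^2}g = I$; since the outer objective depends on $x$ only through $r$, we also get $\nabla_x f = 0$ and $\nabla_{xy}g(x,y;\zeta) = -\nabla_x r(x;\zeta)$. Substituting these into the updates of Algorithm~\ref{alg:BiO-Cond} collapses the inner $y$-step to $y_{j+1}=(1-\gamma)y_j+\gamma\, r(x;\zeta)$, the $u$-step to $u_{j+1}=(1-\rho)u_j+\rho\,\nabla_y f(y_j;\xi)$ (because $\nabla_{y^2}g\,u-\nabla_y f = u-\nabla_y f$), and the outer step to $x_{i+1}=x_i-\eta\,\nabla_x r(x)\,u$, which are precisely Lines~9, 10 and 13 of Algorithm~\ref{alg:Comp-Cond}. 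The inner minimizer is $y_x^{\xi_i}=\tfrac{1}{n_i}\sum_j r(x;\zeta_{\xi_i,j})$ and $u_x^{\xi_i}=\nabla_y f(r(x);\xi_i)$, matching Eq.~\eqref{eq:hg_cond} with $\nabla_{y^2}g^{-1}=I$.

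Next I would check that the hypotheses of Theorem~\ref{theorem:BO-cond} transfer. The inner function is $\mu$-strongly convex in $y$ with $\mu=1$, so the condition number is finite; the Lipschitz requirements on the second-order derivatives reduce to $L_{y^2}=0$ (the Hessian is constant, hence trivially Lipschitz) and to a Lipschitz-Jacobian condition on $r$, which I fold into the smoothness constants. Assumption~\ref{assumption:f_smoothness} on $f$ and the bounded-gradient condition $\|\nabla_y f\|\le C_f$ carry over unchanged, the latter bounding $u_x^{\xi_i}$. The sample-bias Assumptions~\ref{assumption:bounded_diff}, \ref{assumption:bounded_diff2} and the average-gradient-error Assumption~\ref{assumption:ave-grad-err} transfer directly with $\nabla_y f$ and $\nabla r$ playing the roles of $\nabla g$ and $\nabla^2 g$, and Assumption~\ref{assumption:bounded_diff3} follows because $\nabla h(x;\xi)=\nabla_x r(x)\,\nabla_y f(r(x);\xi)$ has bounded deviation once $\nabla r$ is bounded and $\nabla_y f$ has bounded bias. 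With all hypotheses in force, Theorem~\ref{theorem:BO-cond} applies and yields $EE_lk^2 = O(C^{4/\alpha}\epsilon^{-(2+4/\alpha)})$ up to the factor $E_l=O(\log\epsilon^{-1})$; independent sampling ($\alpha=1$, $C=A$) gives $O(\epsilon^{-6})$, while a random-permutation-based order ($\alpha=2$, $C=(\max(m,n))^p A$) gives $O((\max(m,n))^{2p}\epsilon^{-4})$.

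The main obstacle is not the algebra but the bookkeeping in this assumption check: I must confirm that the degenerate constant Hessian $\nabla_{y^2}g=I$ and the compositional form of the hyper-gradient do not silently break any of the boundedness or smoothness constants baked into $\tilde{L}_1$, $\tilde{L}_2$ and $\bar{L}$ inside the proof of Theorem~\ref{theorem:BO-cond}, and that the exponential-moving-average inner updates of Algorithm~\ref{alg:Comp-Cond} are genuinely the strongly-convex inner gradient descent the theorem analyzes, so that Lemma~\ref{lemma:drift-cond} and Lemma~\ref{lemma:desent-cond} can be quoted verbatim rather than re-derived.
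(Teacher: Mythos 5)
Your proposal is correct and follows exactly the route the paper intends: the paper states only that the corollary "is straightforward to derive from Theorem~\ref{theorem:BO-cond}" and omits the details, and your reduction — instantiating $g(x,y;\zeta)=\tfrac{1}{2}\|y-r(x;\zeta)\|^2$ so that $\nabla_{y^2}g=I$, $\mu=1$, $L_{y^2}=0$, checking that Algorithm~\ref{alg:Comp-Cond} is the resulting specialization of Algorithm~\ref{alg:BiO-Cond}, and substituting $(\alpha,C)=(1,A)$ and $(\alpha,C)=(2,(\max(m,n))^p A)$ into the complexity bound of Theorem~\ref{theorem:BO-cond} — is precisely that omitted derivation. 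The only difference is that you supply the bookkeeping the paper skips, which if anything makes the argument more complete than the paper's own.
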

It is straightforward to derive the above corollaries from Theorem~\ref{theorem:BO} and Theorem~\ref{theorem:BO-cond}, we omit the proof here.

\begin{table*}
  \centering
  \caption{ \textbf{Comparisons of the Bilevel Opt. \& Conditional Bilevel Opt. algorithms for finding an $\epsilon$-stationary point}.  An $\epsilon$-stationary point is defined as $\|\nabla h(x)\| \leq \epsilon$. $Gc(f,\epsilon)$ and $Gc(g,\epsilon)$ denote the number of gradient evaluations \emph{w.r.t.} $f(x,y)$ and $g(x,y)$; $JV(g,\epsilon)$ denotes the number of Jacobian-vector products; $HV(g,\epsilon)$ is the number of Hessian-vector products. $m$ and $n$ are the number of data examples for the outer and inner problems, in particular, $n$ is the maximum number of inner problems for conditional bilevel optimization. Our methods have a dependence over example numbers $(max(m,n))^{q}$, $q$ is a value decided by without-replacement sampling strategy and can have value in $[0,1]$ (A herding-based permutation~\cite{lu2022grab} can let $q=0$).
  % The comparison of Minimax and Compositional Algorithms are deferred to the Appendix.
  } \label{tab:1-app}
  \resizebox{\textwidth}{!}{
 \begin{tabular}{c|c|c|c|c|c}
  \toprule
    \textbf{Setting} & \textbf{Algorithm} & $\bm{Gc(f,\epsilon)}$ & $\bm{Gc(g,\epsilon)}$ & $\bm{JV(g,\epsilon)}$ & $\bm{HV(g,\epsilon)}$\\ \midrule
   \multirow{4}{*}{\textbf{B.O.}} &  BSA~\cite{ghadimi2018approximation} & $O(\epsilon^{-4})$ & $O(\epsilon^{-6})$ & $O(\epsilon^{-4})$ & $O(\epsilon^{-4})$ \\ 
   & TTSA~\cite{hong2020two} & $O(\epsilon^{-5})$ & $O(\epsilon^{-5})$ &$O(\epsilon^{-5})$ & $O(\epsilon^{-5})$ \\ 
   & StocBiO~\cite{ji2020provably} &  $O(\epsilon^{-4})$ &$O(\epsilon^{-4})$ & $O(\epsilon^{-4})$ &$O(\epsilon^{-4})$  \\
   & SOBA~\cite{dagreou2022framework} &  $O(\epsilon^{-4})$ &$O(\epsilon^{-4})$ & $O(\epsilon^{-r})$ &$O(\epsilon^{-4})$  \\
   & MRBO~\cite{yang2021provably} &  $O(\epsilon^{-3})$ &$O(\epsilon^{-3})$ & $O(\epsilon^{-3})$ &$O(\epsilon^{-3})$  \\
   & VRBO~\cite{yang2021provably} &  $O(\epsilon^{-3})$ &$O(\epsilon^{-3})$ & $O(\epsilon^{-3})$ &$O(\epsilon^{-3})$  \\
   & SABA~\cite{dagreou2022framework} &  $O(max(m,n)^{2/3}\epsilon^{-2})$ &$O(max(m,n)^{2/3}\epsilon^{-2})$ & $O(max(m,n)^{2/3}\epsilon^{-2})$ &$O(max(m,n)^{2/3}\epsilon^{-2})$  \\
   & SRBA~\cite{pmlr-v238-dagreou24a} &  $O(max(m,n)^{1/2}\epsilon^{-2})$ &$O(max(m,n)^{1/2}\epsilon^{-2})$ & $O(max(m,n)^{1/2}\epsilon^{-2})$ &$O(max(m,n)^{1/2}\epsilon^{-2})$  \\
   & \textbf{WiOR-BO(Ours)} & $O((max(m,n))^{q}\epsilon^{-3})$ & $O((max(m,n))^{q}\epsilon^{-3})$ &$O((max(m,n))^{q}\epsilon^{-3})$ & $O((max(m,n))^{q}\epsilon^{-3})$ \\ \midrule
  \multirow{3}{*}{\textbf{Cond. B.O.}} 
   & DL-SGD~\cite{hu2023contextual} & $O(\epsilon^{-4})$ & $O(\epsilon^{-6})$ &$O(\epsilon^{-4})$ & $O(\epsilon^{-4})$  \\
   & RT-MLMC~\cite{hu2023contextual} & $O(\epsilon^{-4})$ & $O(\epsilon^{-4})$ &$O(\epsilon^{-4})$ & $O(\epsilon^{-4})$\\
    & \textbf{WiOR-CBO (Ours)} & $O((max(m,n))^{q}\epsilon^{-3})$ &$O((max(m,n))^{2q}\epsilon^{-4})$ & $O(n(max(m,n))^{q}\epsilon^{-3})$ & $O((max(m,n))^{2q}\epsilon^{-4})$   \\\bottomrule
 \end{tabular}
 }
\end{table*}

\section{Comparison of WiOR-BO with Other Acceleration Methods of Bilevel Optimization}\label{sec:comp}
In this section, we compare our algorithms with other variance reduction based algorithms for bilevel optimization. For the unconditional case,  WiOR-BO obtain the same $O(\epsilon^{-3})$ rate as STORM-based algorithm MRBO~\cite{yang2021provably}, while is slower than SVRG-type algorithms which have $O(n^q\epsilon^{-2})$ such as VRBO~\cite{yang2021provably}, SABA~\cite{dagreou2022framework} and SRBA~\cite{pmlr-v238-dagreou24a}. This relationship is similar to that for the single level optimization problems. However, as a SGD-type algorithm, WiOR-BO is much simpler to implement in practice compared to STORM-based and ARVG-based algorithms. More specifically, WIOR-BO has  fewer hyper-parameters (learning rates for inner and outer problems) to tune compared to MRBO, which has six independent hyper-parameters, and the optimal theoretical convergence rate is achieved only if the complicated conditions among hyper-parameters are satisfied in Theorem 1 of~\cite{yang2021provably} are satisfied, and this requires significant effort in practice. Next, SRBO/SABA evaluates the full hyper-gradient at the start of each outer loop, where we need to evaluate the first and second order derivatives over all samples in one step, which is very expensive for modern ML models (such as the transformer model with billions of parameters). In contrast, our WiOR-BO never evaluates the full gradient. Note that the inner loop length $I = lcm(m,n)$ is analogous to the concept of "epoch" in single level optimization: where we go over the data samples following a given order, but at each step we evaluate gradient over a mini-batch of samples. The practical advantage of our WiOR-BO is further verified by the superior performance over MRBO and VRBO in the Hyper-Data Cleaning Task.

\end{document}